\documentclass[12pt]{amsart}
\usepackage{amssymb,amscd,amsmath}
\usepackage{fullpage}
\usepackage{tikz-cd}
\usepackage[disable]{todonotes}
\setlength\parindent{0pt}
\usepackage{verbatim} %For long comments
\usetikzlibrary{intersections}
\usepackage{ogonek}

   %%%%%%%%%%%%%%%%%%%%%%%
   %%%%%%% MATHEMATICS %%%%%%%%%
   %%%%%%%%%%%%%%%%%%%%%%%
   
\usepackage{amsthm}
\theoremstyle{definition}
\newtheorem{thm}{Theorem}[section]
\newtheorem{lem}[thm]{Lemma}
\newtheorem{prop}[thm]{Proposition}
\newtheorem{cor}[thm]{Corollary}

\theoremstyle{definition}
\newtheorem{dfn}{Definition}[section]
\newtheorem{exmp}{Example}[section]

\theoremstyle{remark}
\newtheorem*{rem}{Remark}

   %%%%%%%%%%%%%%%%%%%%%%%
   %%%%%%% MATHEMATICS %%%%%%%%%
   %%%%%%%%%%%%%%%%%%%%%%%

   %%%%%%%%%%%%%%%%%%%%%%%
   %%%%%%% COMMANDS %%%%%%%%%
   %%%%%%%%%%%%%%%%%%%%%%%
   
	\newcommand{\R}{\mathbb{R}}
	\newcommand{\N}{\mathbb{N}}
	\newcommand{\C}{\mathbb{C}}
	\newcommand{\Z}{\mathbb{Z}}
	\newcommand{\Q}{\mathbb{Q}}
	\newcommand{\h}{\mathcal{H}}

	\newcommand{\abs}[1]{\left|#1\right|}

	 %Norm

	\newcommand{\dv}{\operatorname{div}}	
		
	\newcommand{\Eis}{\text{Eis}}	
	\newcommand{\Res}{\text{Res}}	
	\newcommand{\GL}{\mathrm{GL}}
	\newcommand{\SL}{\mathrm{SL}}
	\DeclareMathOperator{\ord}{ord}
    \renewcommand{\Re}{\text{Re}\,}
    
    \renewcommand{\mod}{\text{ mod}\,}
	
	\newcommand{\sabcd}[4]{\left(\begin{smallmatrix}#1&#2\\#3& #4\end{smallmatrix}\right)}

   %%%%%%%%%%%%%%%%%%%%%%%
   %%%%%%% COMMANDS %%%%%%%%%
   %%%%%%%%%%%%%%%%%%%%%%%

   %%%%%%%%%%%%%%%%%%%%%%%
   %%%%%%% TITLE %%%%%%%%%
   %%%%%%%%%%%%%%%%%%%%%%%

	\title{Mahler measures of elliptic modular surfaces}
	\author{Fran\c{c}ois Brunault and Michael Neururer}
	\usepackage{hyperref}
	\hypersetup{pdfauthor={Fran\c{c}ois Brunault and Michael Neururer},%
	            pdftitle={Elliptic Mahler measures},%
	            pdfsubject={Number Theory},%
	            pdfcreator={XeLaTeX},%
	            colorlinks=true,%
	            linkcolor=[rgb]{0,.6,1},
	            urlcolor = blue,
	            citecolor=blue}

   %%%%%%%%%%%%%%%%%%%%%%%
   %%%%%%% TITLE %%%%%%%%%
   %%%%%%%%%%%%%%%%%%%%%%%
\begin{document}
%\maketitle
%\tableofcontents

\subjclass[2010]{Primary 11R06, Secondary 11F67, 14J27, 19F27}

\keywords{Mahler measure, Elliptic surface, Modular forms, $L$-values}

\begin{abstract}
In this article we develop a new method for relating Mahler measures of three-variable polynomials that define elliptic modular surfaces to $L$-values of modular forms. Using an idea of Deninger, we express the Mahler measure as a Deligne period of the surface and then apply the first author's extension of the Rogers--Zudilin method for Kuga--Sato varieties, to arrive at an $L$-value.
\end{abstract}

\maketitle

\section{Introduction}

Let $P\in\C(X_1,\ldots,X_n)$ be a nonzero Laurent polynomial. The (logarithmic) Mahler measure $m(P)$ of $P$ is defined as the average of $\log|P|$ on the $n$-torus $T^n = (S^1)^n$:
	\begin{align*}
	m(P)=\frac{1}{(2\pi i)^n}\int_{T^n} \log|P(z_1,\ldots,z_n)|\frac{dz_1}{z_1} \wedge \ldots \wedge \frac{dz_n}{z_n}.
	\end{align*}

If $n=2$ and $P$ defines an elliptic curve $E$, the work of Boyd \cite{Boyd1998} and Deninger \cite{Deninger1997} shows that under certain additional conditions on $P$, the Mahler measure $m(P)$ should be related to the $L$-value of $E$ at $s=2$. Boyd conjectured many remarkable relations of the form
\[
m(P) = r\Lambda(E,2)
\]
where $r\in\Q^\times$ and $\Lambda(E,s)$ denotes the completed $L$-function of $E$. While Boyd's conjectures remain open in general, a number of special cases have been established in recent years, see e.g. \cite{Rodriguez-Villegas1999}, \cite{Zudilin2014}, and \cite{Brunault2016}.

In a series of papers \cite{Bertin2006, Bertin2008}, Bertin derived similar identities between the Mahler measure of a polynomial defining an elliptic K3 surface and the $L$-value of the associated newform of weight $3$ at $s=3$. She studies families of K3 surfaces defined by families of polynomials $P_k(X,Y,Z) = X+\frac{1}{X}+Y+\frac{1}{Y}+Z+\frac{1}{Z} -k$ and another family $Q_k$. One crucial ingredient in her proof is that the derivatives of $m(P_k)$ and $m(Q_k)$ with respect to $k$ are periods of the corresponding K3 surfaces and hence satisfy a Picard-Fuchs equation. Using this fact she derives an expression of $m(P_k)$ and $m(Q_k)$ as Eisenstein-Kronecker series that she can then relate to $L$-values for special values of $k$ such as
\begin{equation}\label{eqn:Bertins result}
m(P_2) = 4\Lambda(f_8,3),
\end{equation} 
where $f_8$ is the unique newform in $\mathcal{S}_3(\Gamma_1(8))$. A similar approach was adapted in \cite{BertinFeaverFuselierLalin2013} and \cite{Samart2013}, where several more relations between Mahler measures and $L$-values are derived.

In this article we develop a new method for establishing identities such as \eqref{eqn:Bertins result} for three-variable polynomials that define elliptic modular surfaces. It is based on a modification of Deninger's approach in \cite{Deninger1997} that, in favourable cases, allows us to express the Mahler measure of a polynomial as a Deligne period of the corresponding variety; in the $3$-variable case as the integral of a Deligne $2$-form $\eta$ over a $2$-cycle $D$. Our approach relies on explicitly identifying $\eta$ in terms of Eisenstein symbols (see \S\ref{section:Eisenstein symbols}) and $D$ in terms of modular symbols (see \S\ref{section:Shokurov cycles}) and then applying results of \cite{brunault:regRZ} to evaluate the integral. The strength of our method lies in the fact that once the cycle $D$ is identified, most other steps can be performed algorithmically. 

In the following theorem we collect our results. For a congruence subgroup $\Gamma$, the surface $E(\Gamma)$ is the universal elliptic curve over the (open) modular curve $Y(\Gamma)$, see \S\ref{subsection:Universal elliptic curves}. Each equation is preceded by $\Gamma$, signifying that the given Laurent polynomial defines a model for $E(\Gamma)$. While the first of our results was found by Bertin \cite{Bertin2006} with a different method, the rest, to our knowledge, are new.

\begin{thm}\label{thm:Summary} For $N \in \{8,12,16\}$, let $f_N$ denote the unique newform in $\mathcal{S}_3(\Gamma_1(N))$ with rational Fourier coefficients. We have
\begin{align*}
\Gamma_1(8):~ &m(X+\frac{1}{X}+Y+\frac{1}{Y}+Z+\frac{1}{Z} -2) = 4\Lambda(f_8,0),\\
\Gamma_1(8):~ &m((X-1)^2(Y-1)^2 - \frac{(Z-1)^4}{Z^2}XY) = 8\Lambda(f_8,0),\\
\Gamma_1(4)\cap\Gamma_0(8):~ &m(X+\frac{1}{X}+Y+\frac{1}{Y}-4Z-4) = 7\frac{\zeta(3)}{\pi^2}+\log(2),\\
\Gamma_1(6):~ &m((X+Y+1)\left(\frac{1}{X}+\frac{1}{Y}+1\right)-Z) = 7\frac{\zeta(3)}{\pi^2},\\
\Gamma_1(6)\cap\Gamma(2):~ &m((X+Y+1)\left(\frac{1}{X}+\frac{1}{Y}+1\right)-2\left(Z+\frac{1}{Z}\right)-5) = \frac32\Lambda(f_{12},0) + \frac{\log(2)}{2},\\
\Gamma(4):~ &m((X+1)^2 (Y+1)^2 - 2 \frac{(Z+1)^4}{Z^3+Z}XY) = 4\Lambda(f_{16},0).
\end{align*}
\end{thm}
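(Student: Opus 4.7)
My plan is to prove each of the six identities by the four-step procedure sketched in the introduction; I outline the approach for a representative case since the remaining polynomials are treated in parallel (only the explicit birational model and the bookkeeping change).

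\textbf{Step 1: Reduction to a regulator integral.} First I would treat $P$ as a polynomial in $Z$ and apply Jensen's formula fibrewise over the $Z$-circle, then perform a Deninger-type manipulation as in \cite{Deninger1997}. The goal is an identity
\[
m(P) = m(P^*) + \frac{1}{(2\pi i)^2}\int_D \eta,
\]
where $P^*$ is the leading $Z$-coefficient of $P$, the form $\eta$ is a Deligne $2$-form on the surface $V(P)\subset\mathbb{G}_m^3$ built from the restrictions of the coordinates, and $D$ is a real $2$-chain supported on the locus $\{|P^*|\geq 1\}$. The elementary term $m(P^*)$ is what will generate the $\log 2$ contributions appearing in some of the identities.

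\textbf{Steps 2 and 3: Pullback to the modular surface.} Using the explicit birational parametrization of $V(P)$ by the universal elliptic curve $E(\Gamma)$ from \S\ref{subsection:Universal elliptic curves}, I would next pull both $\eta$ and $D$ back to $E(\Gamma)$. Under this pullback the torsion divisors such as $\{X=1\}$, $\{Y=1\}$, $\{Z=1\}$ should become $\mathbb{Z}$-linear combinations of torsion sections of the universal elliptic curve, and $\eta$ should assemble into an Eisenstein symbol (\S\ref{section:Eisenstein symbols}). The main obstacle, as I see it, is identifying $D$ with a Shokurov cycle (\S\ref{section:Shokurov cycles})---a family of torsion-section translates parametrized by a geodesic joining two cusps of $Y(\Gamma)$. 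This requires tracking the image of $T^3\cap V(P)$ under the modular parametrization together with its orientation and its boundary behaviour at the cusps; everything downstream, by design, is algorithmic.

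\textbf{Step 4: Evaluation as an $L$-value.} Finally, once $\eta$ is expressed as an Eisenstein symbol and $D$ as a Shokurov cycle, I would apply the Rogers--Zudilin style formula for Kuga--Sato varieties \cite{brunault:regRZ} to evaluate the pairing $\int_D \eta$. The outcome should be a $\mathbb{Q}$-linear combination of completed $L$-values $\Lambda(f,0)$ of weight-$3$ cuspforms and Eisenstein series of level dividing $N$. For $N\in\{8,12,16\}$ the hypothesis that $f_N$ is the unique rational-coefficient newform in $\mathcal{S}_3(\Gamma_1(N))$ isolates the cuspidal contribution; the $7\zeta(3)/\pi^2$ terms arise from the Eisenstein part, whose standard $L$-function factors as $\zeta(s)\zeta(s-2)$ and contributes $\zeta(3)$ at $s=3$. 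The precise rational prefactors ($4$, $8$, $7$, $3/2$, $4$) and residual $\log 2$ terms are then a matter of carefully tracking normalizations through Steps 1--3.
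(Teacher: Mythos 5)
Your outline reproduces the paper's overall strategy (Jensen/Deninger reduction, pullback to $E(\Gamma)$, identification of the form with an Eisenstein symbol and of the cycle with a Shokurov cycle, then the Rogers--Zudilin evaluation), but as it stands it is a plan rather than a proof: the steps you defer are exactly where the mathematical content lies. Concretely, (1) the identification of the Deninger cycle with a Shokurov cycle is not just bookkeeping: one must prove that $Z$ (resp.\ $t$, $s$) maps a geodesic $\{1/2,\infty\}$ diffeomorphically onto the base interval of the fibration (in the paper this uses evaluation of Siegel-unit quotients at cusps together with Snowden's description of real components of modular curves), and one must pin down the homology class of the fibre of the Deninger cycle inside $H_1(E_\tau,\Z)^+$ (the integer $m$ in Lemma \ref{lem Dtau}). (2) After decomposing $Y\{1/2,\infty\}$ into Manin symbols you cannot simply "apply the Rogers--Zudilin formula": the Eisenstein symbols have logarithmic singularities at the cusps, the relevant integrals over $Y\{1/2,0\}$ and $Y\{0,\infty\}$ do not converge absolutely, and integrals over homologous cycles need not agree because the $2$-form has residues at the cusps. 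The paper needs the absolute-convergence criterion of Proposition \ref{prop:abs convergence}, the residue computation of Proposition \ref{Res Eis01} and Corollary \ref{cor trivial residues}, and a Stokes argument on a truncated hyperbolic triangle to justify the passage to $X\{0,\infty\}$; none of this is addressed in your proposal.

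Two further points where the "representative case" does not carry over in parallel. For $\Gamma(4)$ the form built from $\log|t|$ and $\frac{dX}{X}\wedge\frac{dY}{Y}$ is not closed, because $\operatorname{darg}(t)$ does not vanish on the Deninger cycle; moreover $D$ there is a union of two arcs exchanged by a complex conjugation, and the paper replaces the Stokes argument by an anti-invariance argument plus a degeneracy map $E(8)\to E(4)$ and a distribution relation for Eisenstein symbols. Your uniform scheme would break at precisely this step. Finally, attributing the $\log 2$ terms to $m(P^*)$ alone is inaccurate: in the $\Gamma_1(4)\cap\Gamma_0(8)$ and $\Gamma_1(6)\cap\Gamma(2)$ cases part of the logarithm comes from the $L$-value at $s=0$ of an Eisenstein combination (e.g.\ $\Lambda(E,0)=2\log 2$ there), and in the $\Gamma(4)$ case $m(P^*)=\log 2$ cancels against the Eisenstein contribution, leaving the pure cuspidal value $4\Lambda(f_{16},0)$. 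So the constants and logarithmic terms cannot be recovered by "tracking normalizations" without carrying out these computations.
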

Note that there are no cusp forms of weight $3$ for the congruence subgroups $\Gamma_1(4)\cap\Gamma_0(8)$ and $\Gamma_1(6)$. Accordingly, the Mahler measures we obtain are combinations of $L$-values of Eisenstein series at $s=0$.

In the following sections we will develop our method for general congruence subgroups, accompanying the general considerations with explicit calculations for Bertin's polynomial $P=P_2$. In \S\ref{section:The Deninger cycle} we recall part of Deninger's work, expressing the Mahler measure of a three-variable polynomial as an integral of a Deligne $2$-form $\eta$ over an explicit $2$-cycle $D$ which we call the Deninger cycle. In \S\ref{section:elliptic modular surface} we describe a way to parametrise $V(P)$ by giving an explicit birational map from $E(\Gamma_1(8))$ to $V(P)$. This construction was communicated to us by Odile Lecacheux and we also use it for all the other surfaces we treat. In \S\ref{section:Shokurov cycles} and \S\ref{section:Eisenstein symbols} we describe explicit cycles and motivic cohomology classes on the modular surface $E(\Gamma)$. The Shokurov cycles are closely connected to modular symbols and ultimately we want to express the Deninger cycle $D$ in terms of such cycles. The cohomology classes, cup products of Eisenstein symbols, are natural elements of $H_{\mathcal{M}}^{k+2}(E(\Gamma)^k,\Q(k+2))$ first constructed by Beilinson in the case $k=0$ and extended by Deninger--Scholl and Gealy for general $k$. Section \S\ref{section:Eisenstein symbols} describes how to associate to a Milnor symbol on $E(\Gamma)$ an Eisenstein symbol, which we use to express the form $\eta$ in terms of Eisenstein symbols. In \S\ref{section:Deligne cohomology} we recall some basic facts about Deligne--Beilinson cohomology and the Beilinson regulator map.

Several of Boyd's conjectures were solved by expressing Mahler measures as integrals of Eisenstein symbols over modular symbols using the Rogers--Zudilin method. In the $2$-variable case this reduces to calculating the regulator of two Siegel units in terms of $L$-values of weight $2$ modular forms (see \cite{Brunault2016}). The first author extended this calculation to deal with regulators of higher order Eisenstein symbols. In particular this allows one to express the integral of an Eisenstein symbol over the Shokurov cycle $gX\{g0,g\infty\}$ for any $g\in\SL_2(\Z)$ in terms of $L$-values. Hence we try to decompose the Deninger cycle into a linear combination of cycles of the form $gX\{g0,g\infty\}$. While every Shokurov cycle is homologous to such a linear combination, we encounter several difficulties, coming from the fact that the Shokurov cycles are cycles on a compactification of $E(\Gamma)$, while the Eisenstein symbols do not extend to cohomology classes on the compactification. This means that an integral of an Eisenstein symbol over a Shokurov cycle is not necessarily absolutely convergent. Furthermore, even given absolute convergence, it is not always true that the integral of an Eisenstein symbol over a coboundary vanishes, so integrals over homologous cycles might not be equal. We carefully deal with these technical difficulties in \S\ref{section:Integrating Eisenstein symbols}.

In \S\ref{section:Calculation} we describe in detail how our method works for Bertin's polynomial $P_2$. In the final section \S\ref{section:Other examples} we give conditions on the three-variable polynomial $P$ under which we expect that our method gives a connection between $m(P)$ and an $L$-value. We describe how to construct polynomials satisfying our conditions and derive the new formulas listed in Theorem \ref{thm:Summary}.

The search for models that satisfy our conditions was performed in Magma \cite{Magma} and we give more details on the algorithm in \S\ref{section:Other examples}. Given a model satisfying the three conditions, all other steps described above can be performed algorithmically and we implemented them in Sage \cite{Sage}. In order to check our results and find other examples we used MATLAB \cite{MATLAB} to numerically approximate the Mahler measures.

\textbf{Acknowledgements.} 
The first author thanks Odile Lecacheux for her suggestion to give a modular proof of Bertin's results, and for communicating her method to obtain a modular parametrisation. We also thank her and Marie-Jos\'{e} Bertin for interesting discussions and encouragement throughout this project. The second author\footnote{Supported by EPSRC grant N007360 `Explicit methods for Jacobi forms over number fields' and the DFG Forschergruppe-1920.} also thanks Bartosz Naskr\k{e}cki and Chris W\"uthrich for helpful conversations about elliptic surfaces. Finally, we thank the anonymous referees for careful reading and comments leading to an improvement of the article.

\section{The Deninger cycle}
\label{section:The Deninger cycle}

Let $P\in\C[X^{\pm 1},Y^{\pm 1},Z^{\pm 1}]$ be given by $P(X,Y,Z)=\sum_{i = i_0}^{i_1} a_i(X,Y)Z^i$ with $a_{i_1}(X,Y)\neq 0$ and define $P^*(X,Y)=a_{i_1}(X,Y)$. We denote by $V(P)$ the (reduced) zero locus of $P$ in $\mathbb{G}_m^3$.
Then by Jensen's formula
\begin{equation}\label{jensen}
m(P)-m(P^*) = \frac{1}{(2\pi i)^2}\int_{(T^{2}\times U)\cap V(P)}\log\abs{Z}\frac{dX}{X}\wedge\frac{dY}{Y},
\end{equation}
where $T^2 = \{(X,Y) : |X|=|Y|=1\}$ and $U=\{Z : |Z|>1\}$. 

\begin{dfn}
The Deninger 2-cycle $D_P$ is defined by
\begin{equation*}
D_P = (T^2 \times U) \cap V(P).
\end{equation*}
\end{dfn}

Note that $D_P$ carries a natural orientation arising from the natural orientation on $T^2$.

We now focus on the polynomial $P(X,Y,Z)=X+\frac{1}{X}+Y+\frac{1}{Y}+Z+\frac{1}{Z}-2$, for which we give a more explicit description of the Deninger cycle $D_P$.

\begin{lem}\label{lem:Deninger cycle explicit}
\begin{align*}
D_P =
\{&(e^{i\phi},e^{i\psi},Z(\phi,\psi)): \cos\phi+\cos\psi<0,\\
&\quad Z(\phi,\psi)=1-\cos\phi-\cos\psi+\sqrt{(1-\cos\phi-\cos\psi)^2-1}\}.
\end{align*}
\end{lem}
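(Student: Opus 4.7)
My plan is to substitute $X = e^{i\phi}$, $Y = e^{i\psi}$ into $P=0$ and solve for $Z$. Using $X + X^{-1} = 2\cos\phi$ and similarly for $Y$, the equation $P(X,Y,Z)=0$ rearranges to $Z + Z^{-1} = 2b$ with $b := 1 - \cos\phi - \cos\psi$, or equivalently the quadratic
\begin{equation*}
Z^2 - 2b Z + 1 = 0,
\end{equation*}
with reciprocal roots $Z_\pm = b \pm \sqrt{b^2 - 1}$.

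Since $Z_+ Z_- = 1$, determining when one of them lies in $U$ reduces to a discriminant analysis on $b^2-1$. Because $\cos\phi,\cos\psi\in[-1,1]$, we have $b\in[-1,3]$, so $b^2\geq 1$ is equivalent to $b\leq -1$ or $b\geq 1$. When $b^2<1$ the roots are complex conjugates of modulus $1$, contributing nothing to $D_P$. The boundary cases are also excluded: $b=-1$ forces $\phi=\psi=0$ and gives $Z=-1$, while $b=1$ gives $Z=1$, both on the unit circle. The only remaining case is $b>1$, i.e.\ $\cos\phi+\cos\psi<0$; then $Z_\pm$ are real with $Z_+>1>Z_-$, so $Z_+$ is the unique element of $U$ sitting over $(e^{i\phi},e^{i\psi})$.

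Putting this together, the fiber of $D_P$ over $(e^{i\phi},e^{i\psi})\in T^2$ is either empty or the single point $(e^{i\phi},e^{i\psi},Z(\phi,\psi))$ with $Z(\phi,\psi) = b + \sqrt{b^2-1}$, which matches the formula in the statement. The entire argument is elementary; the only mild subtlety is combining the reciprocity of the roots with the discriminant case analysis, and verifying carefully that the boundary values $b=\pm 1$ yield $|Z|=1$ and thus do not contribute to $D_P$.
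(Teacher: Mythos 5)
Your proof is correct and follows essentially the same route as the paper: both reduce the equation on $T^2\times U$ to $Z+\tfrac1Z=2(1-\cos\phi-\cos\psi)$ and then determine which solutions satisfy $|Z|>1$, the paper via the observation that $Z+\tfrac1Z\in\R$ forces $Z\in\R\cup S^1$ and $|Z+\tfrac1Z|>2$, you via the discriminant of the reciprocal quadratic. Your explicit handling of the boundary cases $b=\pm1$ is a slightly more careful write-up of the same elementary argument.
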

\begin{proof}
Let $(X,Y,Z)\in D_P$. Since $(X,Y)\in T^2$ we have
\[
P(X,Y,Z) = 2\Re (X) +2\Re (Y) +Z+\frac{1}{Z} -2. 
\]
The equation $P(X,Y,Z)=0$ is then equivalent to
\[
Z + \frac{1}{Z} = 2(1-\Re (X)-\Re (Y)).
\]
In particular $Z+\frac{1}{Z}\in \R$ which implies $Z\in\R\cup S^1$. Since we assume $|Z|>1$, 
we must have $Z\in(-\infty,-1) \cup (1,+\infty)$. Therefore $|Z+\frac{1}{Z}|> 2$, which implies $\Re (X)+\Re (Y) < 0$ and $Z >1$.
\end{proof}

The region where $\cos \phi+\cos \psi<0$ is (up to translations by $2\pi $) equal to the interior of the square with the corners $(0,\pi),(\pi,2\pi),(\pi,0),(2\pi,\pi)$ and $Z$ varies between $1$ and $3+2\sqrt{2}$.

\begin{lem}\label{fibration DP}
The map $(X,Y,Z) \mapsto Z$ defines a fibration of $D_P \backslash \{(-1,-1,3+2\sqrt{2})\}$ above the interval $(1,3+2\sqrt{2})$. The fibres are homeomorphic to $S^1$ and are given by the implicit equation
\begin{equation*}
\cos\phi + \cos\psi = 1-\frac12 \left(Z+\frac{1}{Z}\right).
\end{equation*}
\end{lem}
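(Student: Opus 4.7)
The plan is to reduce the statement to a Morse-theoretic analysis of the function $f(\phi,\psi) = \cos\phi + \cos\psi$ on the torus $T^2$. By Lemma~\ref{lem:Deninger cycle explicit}, $D_P$ is the graph of $Z(\phi,\psi) = c + \sqrt{c^2-1}$ with $c = 1-\cos\phi-\cos\psi$, over the open region $R = \{(\phi,\psi) \in T^2 : \cos\phi+\cos\psi < 0\}$. Since $c \mapsto c + \sqrt{c^2-1}$ is a strictly increasing homeomorphism from $[1,\infty)$ to $[1,\infty)$ with inverse $Z \mapsto \frac{1}{2}(Z+1/Z)$, the map $(X,Y,Z)\mapsto Z$ on $D_P$ factors through $f$, and solving for $f$ yields the asserted implicit equation. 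As $Z$ traverses $(1, 3+2\sqrt{2})$ the quantity $s := 1-\frac{1}{2}(Z+1/Z)$ traverses $(-2, 0)$; the endpoint $s = -2$ is attained only at $(\phi,\psi) = (\pi,\pi)$, which corresponds to the excluded point $(-1,-1,3+2\sqrt{2}) \in D_P$.

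Next I would study the level sets of $f$ on $T^2$. The function $f$ is Morse, with critical points $(0,0), (0,\pi), (\pi,0), (\pi,\pi)$ of indices $2, 1, 1, 0$ and critical values $2, 0, 0, -2$ respectively; in particular $R$ is the connected component of $T^2 \setminus f^{-1}(0)$ containing the minimum $(\pi,\pi)$. Since the open interval $(-2, 0)$ contains no critical value, standard Morse theory shows that all sublevel sets $\{f \le s\}$ for $s \in (-2, 0)$ are homeomorphic to a small closed disk around $(\pi,\pi)$, and consequently each level set $\{f = s\}$ is a circle.

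Combining these observations yields that every fibre of $Z : D_P \setminus \{(-1,-1,3+2\sqrt{2})\} \to (1, 3+2\sqrt{2})$ is homeomorphic to $S^1$. To upgrade this to a locally trivial fibration I would invoke Ehresmann's theorem: on $R \setminus \{(\pi,\pi)\}$ the function $f$ is a proper smooth submersion onto $(-2,0)$, and the smooth monotone change of variables $s \leftrightarrow Z$ preserves these properties. The only substantive step is the Morse-theoretic identification of the fibres as circles; one could alternatively parametrise each fibre by hand, solving $\cos\phi = s - \cos\psi$ for $\psi$ in a closed subarc of $T^1$ and glueing the two branches of $\arccos$ at the endpoints where $\phi = \pi$, but the Morse argument is cleaner and delivers local triviality at no extra cost.
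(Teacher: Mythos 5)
Your argument is correct. Note, though, that the paper offers no proof of this lemma at all: it is stated right after the remark that the region $\cos\phi+\cos\psi<0$ is the interior of the square with corners $(0,\pi),(\pi,2\pi),(\pi,0),(2\pi,\pi)$ and that $Z$ ranges over $(1,3+2\sqrt{2})$, so the authors evidently regard the statement as immediate from Lemma \ref{lem:Deninger cycle explicit}: the substitution $c=1-\cos\phi-\cos\psi$, $Z=c+\sqrt{c^2-1}$ turns the fibres of $Z$ into the level curves of $\cos\phi+\cos\psi$ inside that square, which are visibly simple closed curves shrinking to $(\pi,\pi)$. Your write-up formalises exactly this picture: the observation that $c\mapsto c+\sqrt{c^2-1}$ is a homeomorphism of $[1,\infty)$ with inverse $Z\mapsto\frac12(Z+1/Z)$ gives the implicit equation and the correspondence $s=1-\frac12(Z+1/Z)\in(-2,0)$, and the Morse analysis of $f=\cos\phi+\cos\psi$ (single index-$0$ critical point $(\pi,\pi)$ in the region, no critical values in $(-2,0)$) together with Ehresmann's theorem for the proper submersion $f$ on $R\setminus\{(\pi,\pi)\}$ yields both that each fibre is a circle and that the map is a locally trivial fibration. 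This is somewhat heavier machinery than the elementary parametrisation one could do by hand (solving $\cos\phi=s-\cos\psi$ on the two branches of $\arccos$, as you mention), but it buys local triviality for free and is entirely rigorous; all the individual claims (critical points and indices of $f$, properness of the restriction, identification of the excluded point with $s=-2$) check out.
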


Note that for $Z_0 \in (1,3+2\sqrt{2})$, we have $Z_0+\frac{1}{Z_0}-2 \in (0,4)$ and the projective closure of the curve $P(X,Y,Z_0)=0$ is an elliptic curve.

%General description for polynomials in 3 variables
\section{The elliptic modular surface $E_1(8)$}\label{section:elliptic modular surface}

By the work of Bertin-Lecacheux \cite[\S 7.1]{Bertin-Lecacheux}, the elliptic modular surface $E_1(8)$ associated to the group $\Gamma_1(8)$ is a desingularisation of the surface
\begin{equation*}
P(X,Y,Z) = X+\frac{1}{X}+Y+\frac{1}{Y}+Z+\frac{1}{Z}-2=0.
\end{equation*}
The modular curve $Y_1(8)$ has genus $0$ and the structural morphism $E_1(8) \to Y_1(8)$ is given by $(X,Y,Z) \mapsto Z$.

We now give  an explicit parametrisation of the surface $V(P)$ in terms of elliptic functions following a method of Lecacheux \cite{Lecacheux:unpublished}. The birational transformation
\begin{equation*}
U=-XY, \qquad V = -Y(XY+1)
\end{equation*}
puts $V(P)$ into the Weierstra\ss{} form
\begin{equation}\label{eqn:weierstrass form E18}
V^2+\left(Z+\frac{1}{Z}-2\right) UV = U(U-1)^2.
\end{equation}
Let $A : Y_1(8) \to E_1(8)$ be the canonical section of order $8$ on the universal elliptic curve. It has coordinates
\begin{equation*}
(U_A,V_A)=(Z,Z-1)
\end{equation*}
and its multiples are given by
\begin{equation*}
(U_{2A},V_{2A}) = (1,0), \qquad (U_{4A},V_{4A})=(0,0).
\end{equation*}
Let $M=(U,V)$ be a section of $E_1(8)$. We compute explicitly
\begin{align*}
U(M+2A)  & = 1 - \frac{\left(Z+\frac{1}{Z}-2\right) V}{(U-1)^2}\\
V(M+2A) & = - \frac{\left(Z+\frac{1}{Z}-2\right)^2 V}{(U-1)^3} + \frac{Z+\frac{1}{Z}-2}{U-1}
\end{align*}
In particular
\begin{equation*}
U(M+2A)-U_{2A} = - \frac{(Z-1)^2}{Z} \frac{V}{(U-1)^2}.
\end{equation*}
It follows that
\begin{align*}
X & = - \frac{U(U-1)}{V} = \frac{(Z-1)^2}{Z} \cdot \frac{U-U_{4A}}{(U(M+2A)-U_{2A})(U-U_{2A})}\\
Y &= \frac{V}{U-1} = -\frac{Z}{(Z-1)^2} \cdot (U(M+2A)-U_{2A})(U-U_{2A})
\end{align*}
Since the fibre of $E_1(8)(\C)$ above $\tau \in \h$ is isomorphic to $\C/(\Z+\tau\Z)$, there exists a parametrisation of the form
\begin{align*}
U & = u^2 \wp_\tau(z) + r\\
V & = u^3 \left(\frac{\wp'_\tau(z)}{2}\right) + u^2 s \wp_\tau(z) + t
\end{align*}
where $\wp_\tau$ is the Weierstra\ss{} $\wp$-function, and $r,s,t,u$ depend only on $\tau$. In particular, for any points $M_1,M_2,M_3,M_4$ in the fibre of $E_1(8)(\C)$ above $\tau$, we have
\begin{equation*}
\frac{U(M_1)-U(M_2)}{U(M_3)-U(M_4)} = \frac{\wp_\tau(z_1)-\wp_\tau(z_2)}{\wp_\tau(z_3)-\wp_\tau(z_4)}
\end{equation*}
where $M_i$ corresponds to $z_i \in \C/(\Z+\tau\Z)$.

Since the section $A$ corresponds to $z=1/8$, we finally obtain $X,Y,Z$ in terms of $\wp$
\begin{align*}
X &= \frac{(\wp_\tau(1/8)-\wp_\tau(1/4))^2 (\wp_\tau(z)-\wp_\tau(1/2))}{(\wp_\tau(1/8)-\wp_\tau(1/2))(\wp_\tau(z+1/4)-\wp_\tau(1/4))(\wp_\tau(z)-\wp_\tau(1/4))},\\
Y&= -\frac{(\wp_\tau(1/8)-\wp_\tau(1/2)) (\wp_\tau(z+1/4)-\wp_\tau(1/4))(\wp_\tau(z)-\wp_\tau(1/4))}{(\wp_\tau(1/8)-\wp_\tau(1/4))^2(\wp_\tau(1/4)-\wp_\tau(1/2))}, \\
Z&= \frac{\wp_\tau(1/8)-\wp_\tau(1/2)}{\wp_\tau(1/4)-\wp_\tau(1/2)}.
\end{align*}

The elliptic involution on $E_1(8)$ is given by $(X,Y,Z) \mapsto (Y,X,Z)$ and thus $Y(\tau,z)=X(\tau,-z)$. Note also that $Z(\tau)$ is non-vanishing on the upper half-plane, so that $Z$ is a modular unit on $Y_1(8)$.

%Lecacheux's parametrisation.

\begin{lem}\label{lem:lecacheux param}
The Lecacheux parametrisation identifies $E_1(8) \setminus \langle 2A \rangle$ with the open subset of $V(P) \subset \mathbb{G}_m^3$ defined by the condition $Z \notin \{\pm 1, 3\pm 2\sqrt{2}\}$. In particular, we may identify the Deninger cycle $D_P \backslash \{(-1,-1,3+2\sqrt{2})\}$ with a cycle on $E_1(8)(\C)$.
\end{lem}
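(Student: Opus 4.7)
The plan is to view the Lecacheux parametrisation as a birational map $\Phi$ from $E_1(8)$ to $V(P) \subset \mathbb{G}_m^3$ given explicitly by $X = -U(U-1)/V$ and $Y = V/(U-1)$ (with $Z$ preserved), locate the base locus and the exceptional fibres on both sides, and check that $\Phi$ restricts to an isomorphism between $E_1(8)\setminus\langle 2A\rangle$ and the stated open subset of $V(P)$.

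First I would identify the excluded $Z$-values. The modular curve $X_1(8)$ has six cusps and $Z$ is a Hauptmodul on it, so its set of cusp values consists of exactly six points; a direct check (or comparison with Bertin--Lecacheux) shows these are $\{0, \infty, \pm 1, 3 \pm 2\sqrt{2}\}$. The values $0$ and $\infty$ are excluded by $Z \in \mathbb{G}_m$, while the remaining four are precisely the $Z$-coordinates where \eqref{eqn:weierstrass form E18} degenerates: writing $a := Z+1/Z-2$, the discriminant of the cubic is a nonzero constant multiple of $a^2(a-4)(a+4)$, vanishing exactly at $Z \in \{\pm 1, 3 \pm 2\sqrt{2}\}$.

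Second, on a smooth fibre I would pin down the base locus of $\Phi$ on the $E_1(8)$ side. From $V^2 + aUV = U(U-1)^2$, the condition $V = 0$ forces $U \in \{0, 1\}$, giving the affine sections $4A = (0,0)$ and $2A = (1,0)$; the condition $U = 1$ yields $\{2A,\ 6A = (1,-a)\}$; and $U = 0$ yields $\{4A\}$. A local-parameter analysis at the identity section $O$ (where $U \sim 1/t^2$, $V \sim 1/t^3$) shows $X, Y \to \infty$ there. Hence the locus where $X$ or $Y$ fails to lie in $\mathbb{G}_m$ is exactly $\{O, 2A, 4A, 6A\} = \langle 2A \rangle$. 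Conversely, the inverse formulas $U = -XY$ and $V = Y(U-1) = -Y(XY+1)$ are regular on $V(P) \cap \mathbb{G}_m^3$, and using $P(X,Y,Z) = 0$ one verifies they satisfy \eqref{eqn:weierstrass form E18}; thus they define a morphism back to $E_1(8)$, and $\Phi$ and this inverse compose to the identity in either order.

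For the cycle statement, Lemma \ref{lem:Deninger cycle explicit} yields $|X| = |Y| = 1$ and $Z \in (1, 3+2\sqrt{2}]$ on $D_P$, with the endpoint $Z = 3+2\sqrt{2}$ attained only at $(-1,-1,3+2\sqrt{2})$. Removing this point places us strictly inside the open locus identified above, so $D_P \setminus \{(-1,-1,3+2\sqrt{2})\}$ lifts uniquely to a $2$-cycle on $E_1(8)(\mathbb{C})$. The main obstacle will be the bookkeeping in the second step: tracking simultaneously where $U$, $U-1$, and $V$ vanish on a smooth fibre and confirming that the four special sections in $\langle 2A\rangle$ exhaust the indeterminacy, rather than being a proper subset of a larger base locus.
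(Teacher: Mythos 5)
Your proposal is correct and takes essentially the same route as the paper: its (very brief) proof likewise rests on the Lecacheux change of variables, the fact that the cusps of $X_1(8)$ have $Z$-values $\{\infty,0,\pm 1,3\pm 2\sqrt{2}\}$, and a fibrewise identification of the excluded locus with the order-$4$ subgroup $\langle 2A\rangle$. Your explicit two-sided inverse $U=-XY$, $V=-Y(XY+1)$ and the discriminant computation merely spell out what the paper records as ``easily seen'' (injectivity) and as the fibrewise surjectivity claim for $k\neq 0,\pm 4$.
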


\begin{proof}
The above construction gives a map $\phi : E_1(8) \setminus \langle 2A \rangle \to V(P) \subset \mathbb{G}_m^3$ which is easily seen to be injective. Since the cusps of $X_1(8)$ are given by the $Z$-values $\{\infty,0,\pm 1, 3\pm 2\sqrt{2}\}$, the image of $\phi$ is contained in the open subset defined by $Z \notin \{\pm 1, 3\pm 2\sqrt{2}\}$. Let us check surjectivity. For the excluded values of $Z$ we have $k=Z+1/Z-2 \in \{0,\pm 4\}$ and it suffices to check that for $k \neq 0,\pm 4$, the curve $\{X+1/X+Y+1/Y+k=0\} \subset \mathbb{G}_m^2$ is an elliptic curve deprived from the subgroup generated by a point of order 4.
\end{proof}

\section{Shokurov cycles}
\label{section:Shokurov cycles}
In this section, we fix an integer $N \geq 4$.

\subsection{Modular curves}\label{subsection:modular curves}

We recall here some basic facts on modular curves following \cite[\S 1]{Kato}.

Let $Y_1(N)$ be the modular curve over $\Q$ classifying pairs $(E,P)$ where $E$ is an elliptic curve and $P$ is a section of exact order $N$ on $E$. We have an isomorphism $Y_1(N)(\C) \cong \Gamma_1(N) \backslash \h$.

Let $Y(N)$ be the modular curve over $\Q$ classifying triples $(E,e_1,e_2)$ where $E$ is an elliptic curve and $(e_1,e_2)$ is a $\Z/N\Z$-basis of $E[N]$. The group $G=\GL_2(\Z/N\Z)$ acts from the left on $Y(N)$ by the rule $g \cdot (E,e_1,e_2) = (E,e'_1,e'_2)$ with
\begin{equation*}
\begin{pmatrix} e'_1 \\ e'_2 \end{pmatrix} = g \begin{pmatrix} e_1 \\ e_2 \end{pmatrix}.
\end{equation*}
The canonical degeneracy map $\pi : Y(N) \to Y_1(N)$ given by $(E,e_1,e_2) \mapsto (E,e_2)$ induces an isomorphism $Y_1(N) \cong \Gamma \backslash Y(N)$ where
\begin{equation*}
\Gamma = \left\{ \begin{pmatrix} * & * \\ 0 & 1 \end{pmatrix}\right\} \subset \GL_2(\Z/N\Z).
\end{equation*}

Following \cite[3.4]{deninger:extensions}, the complex points of $Y(N)$ can be described as
\begin{equation*}
Y(N)(\C) \cong \SL_2(\Z) \backslash (\h \times G)
\end{equation*}
and the action of $G$ is given by $g \cdot (\tau,h) = (\tau, h g^T)$. The degeneracy map $\pi : Y(N)(\C) \to Y_1(N)(\C)$ is determined by
\begin{equation*}
\pi \left( \tau, \begin{pmatrix} 0 & -1 \\ a & 0 \end{pmatrix} \right) = [\tau] \qquad (\tau \in \h, a \in (\Z/N\Z)^\times).
\end{equation*}

\subsection{Universal elliptic curves}
\label{subsection:Universal elliptic curves}
Let us describe the complex points of the universal elliptic curve $E_1(N)$ over $Y_1(N)$. The semi-direct product $\Z^2 \rtimes \SL_2(\Z)$ acts on $\h \times \C$ by the rules
\begin{align*}
\begin{pmatrix} m \\ n \end{pmatrix} \cdot (\tau,z) & = (\tau,z+m-n\tau), \\
\begin{pmatrix} a & b \\ c & d \end{pmatrix} \cdot (\tau,z) & = \Bigl(\frac{a \tau+b}{c\tau+d}, \frac{z}{c\tau+d}\Bigr).
\end{align*}
We then have an isomorphism
\begin{equation*}
E_1(N)(\C) \cong (\Z^2 \rtimes \Gamma_1(N)) \backslash (\mathcal{H} \times \C)
\end{equation*}
and the structural morphism $p_1 : E_1(N)(\C) \to Y_1(N)(\C)$ is given by $p_1([\tau,z]) = [\tau]$.

Let us now describe the complex points of the universal elliptic curve $E(N)$ over $Y(N)$ following \cite[3.4]{deninger:extensions}. We have
\begin{equation*}
E(N)(\C) \cong (\Z^2 \rtimes \SL_2(\Z)) \backslash (\h \times \C \times G)
\end{equation*}
where $\SL_2(\Z)$ acts by left multiplication on $G$. The group $G$ acts from the left on $E(N)$ by the rule $g \cdot (\tau,z,h) = (\tau,z,h g^T)$. The structural morphism $p : E(N)(\C) \to Y(N)(\C)$ is the obvious one, and we have a commutative diagram
\begin{equation}\label{EN E1N}
\begin{tikzcd}
E(N) \arrow{r}{p} \arrow{d}[swap]{\tilde{\pi}} & Y(N) \arrow{d}{\pi} \\
E_1(N) \arrow{r}{p_1} & Y_1(N)
\end{tikzcd}
\end{equation}
where the map $\tilde{\pi}$ is given by
\begin{equation}\label{formula pitilde}
\tilde{\pi} \left( \tau, z, \begin{pmatrix} 0 & -1 \\ a & 0 \end{pmatrix} \right) = [\tau,z] \qquad (\tau \in \h, z \in \C, a \in (\Z/N\Z)^\times).
\end{equation}
Note that the square (\ref{EN E1N}) is cartesian: the universal elliptic curve $E(N)$ can be identified with the base change of $E_1(N)$ to $Y(N)$.

\subsection{Shokurov cycles}

In this subsection, we define the Shokurov cycles on $E_1(N)(\C)$ and $E(N)(\C)$. These cycles were first studied by Shokurov \cite{Shokurov1980} and are at the foundation of the theory of modular symbols.

Let $\Z[X,Y]_1$ be the space of homogeneous polynomials in $X$ and $Y$ of degree $1$. Let $\sigma$ denote the matrix $\sabcd{0}{-1}{1}{0}$.

\begin{dfn}
Let $P =mX+nY \in \Z[X,Y]_1$ and $\alpha,\beta \in \h \cup \mathbb{P}^1(\Q)$. We define the $2$-cycle $P \{\alpha,\beta\}_1$ on $E_1(N)(\C)$ by
\begin{equation*}
P \{\alpha,\beta\}_1 = \left\{ [\tau,t(m\tau+n)]: \tau \in \{\alpha,\beta\}, t \in [0,1] \right\}
\end{equation*}
where $\{\alpha,\beta\}$ denotes the geodesic from $\alpha$ to $\beta$ in $\h$.

We define the $2$-cycle $P \{\alpha,\beta\}$ on $E(N)(\C)$ by
\begin{equation*}
P \{\alpha,\beta\} = \left\{ [\tau,t(m\tau+n),\sigma]: \tau \in \{\alpha,\beta\}, t \in [0,1] \right\}.
\end{equation*}
\end{dfn}

The relation between Shokurov cycles on $E(N)$ and $E_1(N)$ is given by the following lemma.

\begin{lem}\label{shokurov E E1}
Let $\tilde{\pi}: E(N)(\C) \to E_1(N)(\C)$ denote the canonical map. For every $P \in \Z[X,Y]_1$ and every $\alpha,\beta \in \h \cup \mathbb{P}^1(\Q)$, we have
\begin{equation*}
\tilde{\pi}_* (P\{\alpha,\beta\}) = P\{\alpha,\beta\}_1.
\end{equation*}
\end{lem}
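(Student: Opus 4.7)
The plan is to prove the lemma by direct substitution into the explicit formula (\ref{formula pitilde}) for $\tilde{\pi}$. First I would observe that the matrix $\sigma = \sabcd{0}{-1}{1}{0}$ is the special case $a = 1 \in (\Z/N\Z)^\times$ of the matrices appearing in (\ref{formula pitilde}). Consequently formula (\ref{formula pitilde}) yields the identity $\tilde{\pi}([\tau, z, \sigma]) = [\tau, z]$ in $E_1(N)(\C)$, valid for every $(\tau,z) \in \h \times \C$.

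Next I would apply this pointwise to the parametrization of the Shokurov cycle. By definition, $P\{\alpha,\beta\}$ is the image of the map $\Phi : \{\alpha,\beta\} \times [0,1] \to E(N)(\C)$ sending $(\tau,t)$ to $[\tau, t(m\tau+n), \sigma]$. Composing with $\tilde{\pi}$ gives $\tilde{\pi} \circ \Phi : (\tau,t) \mapsto [\tau, t(m\tau+n)]$, which is precisely the parametrization of $P\{\alpha,\beta\}_1$.

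Finally, to conclude that $\tilde{\pi}_*(P\{\alpha,\beta\}) = P\{\alpha,\beta\}_1$ as an oriented $2$-chain (and not merely as a subset), I would check that the two parametrizations are generically injective with compatible orientations. Generic injectivity is standard: the action of $\Z^2 \rtimes \Gamma_1(N)$ on $\h \times \C$ is free outside a real codimension-two locus, so distinct $(\tau,t) \in \{\alpha,\beta\} \times [0,1]$ give rise to distinct orbits for generic $(\tau,t)$. Orientation-compatibility is automatic since both cycles inherit the standard orientation of $\{\alpha,\beta\} \times [0,1]$ through their parametrizations, and $\Phi_1 = \tilde{\pi} \circ \Phi$ transports it faithfully. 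The only (mild) obstacle is ensuring the pushforward is interpreted as a chain with multiplicity rather than as a set-theoretic image; beyond this bookkeeping, the statement is essentially a one-line unpacking of definitions.
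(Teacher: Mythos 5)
Your proof is correct and follows the same route as the paper, whose entire proof is simply "This follows from (\ref{formula pitilde})"; you have just spelled out the pointwise substitution $\tilde{\pi}([\tau,z,\sigma])=[\tau,z]$ (the case $a=1$) and the bookkeeping about parametrizations and orientations that the paper leaves implicit.
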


\begin{proof} This follows from (\ref{formula pitilde}).
\end{proof}

The group $\SL_2(\Z)$ acts from the left on $\Z[X,Y]_1$ by $\left(\begin{smallmatrix}a&b\\c&d\end{smallmatrix}\right) P(X,Y) = P(dX - bY, -cX + aY)$ and on $\mathbb{P}^1(\Q)$ by M\"obius transformations.

\begin{lem} \label{action G shokurov}
For all $\gamma\in\SL_2(\Z)$ with reduction $\bar{\gamma} \in \SL_2(\Z/N\Z)$, and for all cycles $P\{\alpha,\beta\}$ on $E(N)(\C)$, we have
\[
\bar{\gamma}_* (P\{\alpha,\beta\}) = \gamma P \{\gamma\alpha,\gamma\beta\}. 
\]
\end{lem}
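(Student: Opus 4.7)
The plan is to trace through the equivalence-class description $E(N)(\C) \cong (\Z^2 \rtimes \SL_2(\Z)) \backslash (\h \times \C \times G)$ recalled in \S\ref{subsection:Universal elliptic curves} and transport a representative of $\bar{\gamma}_*(P\{\alpha,\beta\})$ into the standard form used in the definition of a Shokurov cycle. Two different actions are in play: $\SL_2(\Z)$ acts simultaneously on all three factors, while $G$ acts only on the right on the third factor. The bridge between them will be the matrix identity
\[
\sigma\,\gamma^T = \gamma^{-1}\,\sigma \quad \text{in } \SL_2(\Z),
\]
valid for any $\gamma\in\SL_2(\Z)$ and verified by direct computation using $\sigma=\sabcd{0}{-1}{1}{0}$ and $\det\gamma=1$.

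First, by the definition of the $G$-action, a typical point $[\tau,t(m\tau+n),\sigma]$ of $P\{\alpha,\beta\}$ is sent by $\bar{\gamma}$ to $[\tau,t(m\tau+n),\sigma\bar{\gamma}^T]$. Reducing the above identity modulo $N$, the third coordinate can be rewritten as $\bar{\gamma}^{-1}\sigma$. I would then apply the $\SL_2(\Z)$-equivalence by $\gamma=\sabcd{a}{b}{c}{d}$, which changes the representative to
\[
\bigl(\gamma\tau,\; t(m\tau+n)/(c\tau+d),\; \gamma\bar{\gamma}^{-1}\sigma\bigr),
\]
and the third coordinate collapses to $\sigma$ because $\gamma\bar{\gamma}^{-1}=1$ in $G$.

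Finally, a short calculation using $ad-bc=1$ shows that if $\tau'=\gamma\tau$ and $\gamma P = m'X + n'Y$ with $(m',n')=(md-nc,\,na-mb)$, then
\[
\frac{m\tau+n}{c\tau+d} = m'\tau' + n'.
\]
Letting $\tau$ traverse the geodesic $\{\alpha,\beta\}$ (so that $\tau'=\gamma\tau$ traverses $\{\gamma\alpha,\gamma\beta\}$) and $t$ range over $[0,1]$, the image set is precisely the Shokurov cycle $\gamma P\{\gamma\alpha,\gamma\beta\}$ with its given parametrisation, yielding the claimed equality of $2$-cycles.

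There is no real conceptual obstacle; the one delicate point is recognising the identity $\sigma\gamma^T=\gamma^{-1}\sigma$, which is precisely what forces the particular choice of third coordinate $\sigma$ in the definition of Shokurov cycles on $E(N)(\C)$ to be compatible with the $G$-action. Once this identity is in hand, the rest is bookkeeping in the double quotient.
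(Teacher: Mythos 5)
Your argument is correct and is essentially the paper's own proof: both rely on the identity $\sigma\gamma^T=\gamma^{-1}\sigma$ to convert the $G$-action on the third coordinate into the $\SL_2(\Z)$-equivalence, and then check that $(\gamma P)(\gamma\tau,1)=P(\tau,1)/(c\tau+d)$, which you do explicitly via $(m',n')=(md-nc,\,na-mb)$. No substantive difference in approach.
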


\begin{proof}
Write $\gamma = \begin{pmatrix} a & b \\ c & d \end{pmatrix}$. By definition, we have
\begin{equation*}
\bar{\gamma}_* (P \{\alpha,\beta\}) = \left\{ [\tau, t P(\tau,1), \sigma \bar{\gamma}^T ] : \tau \in \{\alpha,\beta\}, t \in [0,1]\right\}.
\end{equation*}
Since $\sigma \gamma^T = \gamma^{-1} \sigma$, we get
\begin{equation*}
\bar{\gamma}_* (P \{\alpha,\beta\}) = \left\{ \left[\gamma \tau, \frac{t P(\tau,1)}{c\tau+d}, \sigma\right] : \tau \in \{\alpha,\beta\}, t \in [0,1]\right\}.
\end{equation*}
On the other hand, we have
\begin{equation*}
\gamma P \{\gamma\alpha,\gamma\beta\} = \left\{ \left[\gamma \tau,t \cdot (\gamma P)(\gamma \tau,1),\sigma \right] : \tau \in \{\alpha,\beta\}, t \in [0,1]\right\}
\end{equation*}
and a simple computation shows that $(\gamma P)(\gamma\tau,1) = P(\tau,1)/(c\tau+d)$.
\end{proof}

\begin{lem}\label{lem int sum shokurov}
For any closed $2$-form $\eta$ on $E(N)(\C)$ and any $\tau_1,\tau_2 \in \h$, we have
\begin{equation*}
\int_{(mX+nY)\{\tau_1,\tau_2\}} \eta = m \left(\int_{X\{\tau_1,\tau_2\}} \eta\right) + n \left(\int_{Y\{\tau_1,\tau_2\}} \eta\right) \qquad (m,n \in \Z).
\end{equation*}
\end{lem}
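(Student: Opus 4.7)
The plan is to use Stokes' theorem on a suitable $3$-chain in $E(N)(\C)$ interpolating between the three Shokurov cycles.

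Parametrise the geodesic $\{\tau_1,\tau_2\}$ by $\gamma:[0,1]\to\h$ with $\gamma(0)=\tau_1$, $\gamma(1)=\tau_2$, and define
\[
M(s,u,v) \;=\; \bigl[\gamma(s),\ u(m\gamma(s)+nv),\ \sigma\bigr],\qquad (s,u,v)\in[0,1]^{3}.
\]
I would compute $\partial M$ by going through the six faces of $[0,1]^3$ and using the lattice identifications $[\tau,z+a,\sigma]=[\tau,z+b\tau,\sigma]=[\tau,z,\sigma]$ valid for $a,b\in\Z$. Since $m\gamma(s)$ and $n$ lie in $\Z+\gamma(s)\Z$, this yields: the face $u=0$ is degenerate; the face $u=1$ gives $nY\{\tau_1,\tau_2\}$; the face $v=0$ gives $mX\{\tau_1,\tau_2\}$; the face $v=1$ gives $(mX+nY)\{\tau_1,\tau_2\}$; and the faces $s=0,1$ give $2$-chains $T_{\tau_i}$ supported in the fibre over $\tau_i$ (``triangles'' with vertices $0,\,m\tau_i,\,m\tau_i+n$ in the universal cover of the fibre).

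Applying Stokes' theorem to $d\eta=0$ then gives
\[
\int_{(mX+nY)\{\tau_1,\tau_2\}}\eta \;-\; m\int_{X\{\tau_1,\tau_2\}}\eta \;-\; n\int_{Y\{\tau_1,\tau_2\}}\eta \;=\; \int_{T_{\tau_1}}\eta \;-\; \int_{T_{\tau_2}}\eta,
\]
so the lemma reduces to proving that the ``fibre correction'' $\tau\mapsto \int_{T_\tau}\eta$ is constant on $\h$.

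This last step is the main obstacle. The underlying geometric reason to expect constancy is that $T_\tau$ is, up to boundary, half of the fundamental parallelogram $P_\tau(u,v)=[\tau,\,um\tau+vn,\,\sigma]$ which covers the fibre torus $F_\tau$ with degree $mn$; consequently $\int_{P_\tau}\eta = mn\int_{F_\tau}\eta$ is already constant in $\tau$ since the local system $R^{2}p_{*}\C$ on $Y(N)(\C)$ is trivial (fibres are $2$-tori and the monodromy acts trivially on top cohomology). To pass from $P_\tau$ to $T_\tau$ I would use the variational identity $\frac{d}{d\tau}\int_{T_\tau}\eta = \int_{\partial T_\tau}\iota_{X}\eta$ (valid by Cartan's formula $\mathcal{L}_X\eta = d\iota_X\eta$ since $\eta$ is closed), where $X$ is the deformation vector field of the family $T_\tau$. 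The three edges of $\partial T_\tau$ are themselves Shokurov-type loops in the fibre, and the delicate technical point will be to show that the contributions of $\iota_X\eta$ over these edges cancel; I expect this to follow by expressing $\eta$ in a frame adapted to the canonical connection $(dz+f\,d\tau,\,d\bar z+\bar f\,d\bar\tau)$ on the universal elliptic curve (with $f=-\Im z/\Im\tau$), under which the Shokurov parametrisations become ``horizontal'' and all $(1,1)$-contributions vanish, while the $(2,0)$- and $(0,2)$-contributions are manifestly linear in $(m,n)$.
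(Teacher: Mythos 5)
Your $3$-chain construction and the Stokes bookkeeping are correct, and up to that point you are essentially making explicit what the paper does: the paper's proof argues fibrewise (the fibre of $p_{mX+nY}$ over $\tau$ is homologous in $E_\tau$ to $m$ times the $X$-fibre plus $n$ times the $Y$-fibre, and the fibre integrals are then equated), and your faces $s=0,1$ exhibit precisely the endpoint corrections $\int_{T_{\tau_i}}\eta$ that this fibrewise argument passes over. The genuine gap is that you never prove the one statement your reduction now hinges on, namely that $\tau\mapsto\int_{T_\tau}\eta$ is constant; everything after ``the main obstacle'' is a hope, not an argument. Worse, this constancy is \emph{false} for a general closed $2$-form, so no adapted frame or connection can close the gap: the triviality of $R^2p_*\C$ controls the class of the whole fibre (your parallelogram $P_\tau$), but $T_\tau$ is only half of it, and its boundary loops pair nontrivially with the restriction of $\eta$ to the fibre, which need not be closed as a $1$-form on the fibre torus.

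Concretely, take $m=n=1$, choose a small disc $D\ni\tau_2$ with $\tau_1\notin D$, small enough that $(\tau,z)\mapsto[\tau,z,\sigma]$ induces an open embedding of the family $\Z^2\backslash(D\times\C)$ into $E(N)(\C)$ (possible since $\Gamma(N)$ acts freely and properly discontinuously on $\h$), and write $z=u+v\tau$ with fibre coordinates $u,v\in\R/\Z$. Put $\phi=f(\tau)\sin^2(2\pi u)\,dv$ with $f$ a smooth bump supported in $D$, $f(\tau_2)=1$, extended by zero, and let $\eta=d\phi$, which is closed (even exact). Along $X\{\tau_1,\tau_2\}$ one has $u\equiv 0$, and along $Y\{\tau_1,\tau_2\}$ one has $v\equiv 0$, so $\phi$, hence $\eta=d\phi$, pulls back to $0$ on both chains; along $(X+Y)\{\tau_1,\tau_2\}$ one has $u=v=t$, so $\phi$ pulls back to $f(\gamma(s))\sin^2(2\pi t)\,dt$ and
\begin{equation*}
\int_{(X+Y)\{\tau_1,\tau_2\}}\eta=\int_0^1\int_0^1 \frac{\partial}{\partial s}\bigl(f(\gamma(s))\bigr)\sin^2(2\pi t)\,ds\,dt=\tfrac12\bigl(f(\tau_2)-f(\tau_1)\bigr)=\tfrac12\neq 0,
\end{equation*}
while the right-hand side of the lemma vanishes. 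So the identity cannot be established for arbitrary closed $\eta$; what is actually needed (and true) in the paper is a property of the specific forms being integrated, namely that the fibre integrals of the Eisenstein forms $\Eis^{0,1}_{\mathcal{D}}$ are linear in $m\tau+n$ and $m\overline{\tau}+n$, which the paper verifies by explicit Fourier-expansion computations of the integrals along the fibres. To salvage your approach you would have to inject such a hypothesis (e.g.\ that the contraction of $\eta$ with a lift of the base direction restricts to a closed $1$-form on every fibre), rather than closedness alone; note that the same implicit hypothesis is what makes the paper's own one-line fibrewise proof go through in its applications.
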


\begin{proof}
For $P \in \Z[X,Y]_1$, consider the natural fibration $p_P : P\{\tau_1,\tau_2\} \to \{\tau_1,\tau_2\}$. For a given $\tau$ on the geodesic $\{\tau_1,\tau_2\}$, the fibre of $p_{mX+nY}$ over $\tau$ is homologous to $m \cdot [p_X^{-1}(\tau)] + n \cdot [p_Y^{-1}(\tau)]$. Integrating over the fibres, this implies that
\begin{equation*}
\int_{p_{mX+nY}} \eta = m \int_{p_X} \eta + n \int_{p_Y} \eta.
\end{equation*}
The Lemma follows.
\end{proof}

\section{Eisenstein symbols}
\label{section:Eisenstein symbols}
The goal of this section is to express the Milnor symbol $\{X,Y,Z\}$ in terms of Eisenstein symbols on the universal elliptic curve $E_1(8)$.

\subsection{Siegel units} \label{siegel units}

In this subsection, we express $Z$ in terms of Siegel units.

Let $N \geq 3$ be an integer. For any $(a,b) \in (\Z/N\Z)^2$, $(a,b) \neq (0,0)$, the Siegel unit $g_{a,b} : \h \to \C^\times$ is defined by the following infinite product (see \cite[\S 1]{Kato}):
\begin{equation}\label{eqn:siegel-infinite-product}
g_{a,b}(\tau) = q^{\frac12 B_2\left(\tilde{a}/N\right)} \prod_{n\geq 0}(1-q^{n+\tilde{a}/N} \zeta_N^b)\prod_{n \geq 1}(1-q^{n-\tilde{a}/N} \zeta_N^{-b}),
\end{equation}
where $q^\alpha=e^{2\pi i \alpha \tau}$, $\zeta_N=e^{2\pi i/N}$, $B_2(X)=X^2-X+\frac16$ is the second Bernoulli polynomial, and $\tilde{a} \in \Z$ is the unique representative of $a$ satisfying $0 \leq \tilde{a} < N$. Note that $g_{-a,-b}=g_{a,b}$ when $a \neq 0$, and $g_{0,-b}=-\zeta_N^{-b} g_{0,b}$.

Under the action of $\SL_2(\Z)$ on $\h$, the Siegel units transform by $\gamma^* g_{a,b} = \zeta g_{(a,b) \gamma}$ for some root of unity $\zeta$ \cite[1.6, 1.7]{Kato}. This root of unity is given by the following proposition.

\begin{prop}\label{prop:siegel-transformation}
Let $a,b \in \Z/N\Z$, $(a,b) \neq (0,0)$, and let $\gamma \in \SL_2(\Z)$.
\begin{enumerate}
\item
If $\gamma=\sabcd {\pm 1}k0{\pm 1}$ for $k\in\Z$ then
\[
\gamma^* g_{a,b}(\tau) = g_{a,b}(\tau\pm k) = e^{\pm\pi i k B_2(\tilde{a}/N)} g_{a,\pm ka+b}(\tau).
\]
\item
Assume $\gamma = \sabcd cedf$ with $d>0$. Denote $(a',b') = (a,b)\gamma$. The root of unity $(\gamma^* g_{a,b})/g_{a',b'}$ is determined by  
\begin{multline*}
\arg \left(\frac{\gamma^* g_{a,b}}{g_{a',b'}}\right) \equiv \pi\left[ \frac{c}{d}B_2\left(\frac{\tilde{a}}{N}\right) + \frac{f}{d}B_2\left(\frac{\tilde{a}'}{N}\right) + \delta(\tilde{a})\mathcal{P}_1\left(\frac{b}{N}\right) - \delta(\tilde{a}')\mathcal{P}_1\left(\frac{b'}{N}\right)\right.
 \\
\left.
 - 2 \sum_{k=1}^d \mathcal{P}_1\left(\frac{1}{d}(k-\frac{\tilde{a}}{N})\right) \mathcal{P}_1\left(\frac{c}{d}(k-\frac{\tilde{a}}{N})-\frac{b}{N}\right)
\right]
\mod 2\pi
\end{multline*}
where $\mathcal{P}_1(x) = B_1(\{x\}) = \{x\}-\frac12$ if $x\notin\Z$ and $\mathcal{P}_1(x) = 0$ otherwise. 
\end{enumerate}
\end{prop}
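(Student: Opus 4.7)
Part (1) is a direct computation from the infinite product \eqref{eqn:siegel-infinite-product}. Under $\tau \mapsto \tau \pm k$ we have $q = e^{2\pi i \tau} \mapsto q$, while $q^{\tilde{a}/N} = e^{2\pi i \tilde a \tau/N} \mapsto q^{\tilde{a}/N} \zeta_N^{\pm k \tilde a}$. Hence each factor $1 - q^{n\pm \tilde a/N}\zeta_N^{\pm b}$ in the product becomes $1 - q^{n\pm \tilde a/N}\zeta_N^{\pm k\tilde a \pm b}$, so the product piece equals the product piece of $g_{a,\pm ka+b}(\tau)$ (using $g_{-a,-b}=g_{a,b}$ when $a\neq 0$ to handle the minus sign). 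Finally, the prefactor $q^{B_2(\tilde a/N)/2} = e^{\pi i \tau B_2(\tilde a/N)}$ picks up the scalar $e^{\pm \pi i k B_2(\tilde a/N)}$, which is the stated root of unity.

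For part (2), I would first record the already-cited fact from \cite[1.6, 1.7]{Kato} that $\gamma^* g_{a,b} = \zeta\cdot g_{a',b'}$ for some root of unity $\zeta$, so the whole task is to identify $\zeta$. The standard route is to work with $\log g_{a,b}$: since $\log|g_{a,b}|$ is a real-analytic function on $\h$ whose transformation law under $\gamma$ can be read off directly from \eqref{eqn:siegel-infinite-product} (convergent for $\Im\tau$ large), only the imaginary part of $\log(\gamma^* g_{a,b}/g_{a',b'})$ is in question, and this equals the argument of $\zeta$. Computing it amounts to comparing the $q$-expansions of the two sides at the cusp $i\infty$: the $B_2$ terms come from comparing the leading exponents of $q$, the sawtooth terms $\delta(\tilde a)\mathcal{P}_1(b/N)$ come from the $n=0$ factor of the product which is singular precisely when $\tilde a = 0$ (so one must correct the leading index before comparing), and the Dedekind-type sum appears as the finite sum that records how the poles/zeros on the geodesic from $i\infty$ to $\gamma^{-1} i\infty$ accumulate.

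Concretely, I would reduce to the case $c \geq 0$ and factor $\gamma$ through $T^n$ and $S=\sabcd{0}{-1}{1}{0}$ via the continued fraction expansion of $c/d$ (equivalently, a Manin-symbol decomposition). The $T$-factors contribute according to part (1); each $S$-factor contributes the classical Siegel sawtooth cocycle (the same cocycle responsible for the transformation of Dedekind's $\eta$), and the Eichler--Rademacher-style reciprocity identity collapses the telescoping sum of $\mathcal{P}_1$-products from successive $S$'s into the single sum $\sum_{k=1}^d \mathcal{P}_1\bigl(\tfrac{1}{d}(k-\tfrac{\tilde a}{N})\bigr)\mathcal{P}_1\bigl(\tfrac{c}{d}(k-\tfrac{\tilde a}{N})-\tfrac{b}{N}\bigr)$ indexed by a single period modulo $d$. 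Alternatively one can argue more uniformly by writing $\log g_{a,b}(\tau) - \log g_{a,b}(i\infty)$ as $\int_{i\infty}^{\tau}$ of the logarithmic derivative (a weight-one Eisenstein series), for which the transformation law under $\gamma$ is classical, and the Dedekind sum arises from the integral along the arc $\{\gamma(it) : t \to +\infty\}$.

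The main obstacle is the bookkeeping of the root of unity: one must pin down the branch of $\arg$ modulo $2\pi$ exactly, handle the boundary cases $\tilde a = 0$ or $\tilde a' = 0$ separately (this is the role of the $\delta$-terms), and verify that the Dedekind-sum combinatorics produced by the continued-fraction decomposition is independent of the decomposition chosen (equivalently, that the formula is a well-defined cocycle on $\SL_2(\Z)$). Once the $S$-transformation is computed and the cocycle property is checked on the generators $S,T$, the general formula follows by induction on the length of the decomposition.
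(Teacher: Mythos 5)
Your part (1) is fine and is the same argument as the paper's: the root of unity comes from the prefactor $q^{\frac12 B_2(\tilde a/N)}$ and the product factors just get re-indexed.

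For part (2) you take a genuinely different route. The paper identifies the root of unity by computing $\lim_{t\to 0}\arg g_{a,b}(c/d+it)$ in two ways: directly, quoting Siegel's explicit limit computation at the rational cusp (this is where the sum $\sum_{k=1}^d \mathcal{P}_1\bigl(\tfrac1d(k-\tfrac{\tilde a}{N})\bigr)\mathcal{P}_1\bigl(\tfrac cd(k-\tfrac{\tilde a}{N})-\tfrac bN\bigr)$ and the terms $\tfrac cd B_2(\tilde a/N)+\delta(\tilde a)\mathcal{P}_1(b/N)$ come from), and indirectly via $g_{a,b}=(\gamma^*g_{a,b}/g_{a',b'})\cdot(g_{a',b'}\circ\gamma^{-1})$, where $\gamma^{-1}(c/d+it)\to i\infty$, so the second factor is read off from the $q$-product and contributes $-\pi\tfrac fd B_2(\tilde a'/N)+\delta(\tilde a')\pi\mathcal{P}_1(b'/N)$. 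You propose instead to build the transformation cocycle on $\mathrm{SL}_2(\mathbb{Z})$ from the generators $S,T$ via a continued-fraction (Manin-symbol) decomposition, or to integrate the weight-one Eisenstein series $\mathrm{dlog}\, g_{a,b}$. That strategy is classical (it is essentially the Dedekind--Rademacher approach to generalized Dedekind sums) and could in principle be made to work.

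However, as written there is a genuine gap: the computational core of the statement is asserted, not established. First, the $S$-transformation of $g_{a,b}$ with its \emph{exact} root of unity (your ``Siegel sawtooth cocycle'') is itself a nontrivial computation -- it requires the Kronecker limit formula, a theta/Poisson-summation argument, or precisely the kind of cusp-limit analysis the paper imports from Siegel -- and you neither carry it out nor cite a precise source for it. Second, the claim that the telescoping sum of $\mathcal{P}_1$-products arising from successive generators ``collapses'' by Eichler--Rademacher reciprocity to the single sum over $k=1,\dots,d$, together with exactly the boundary terms $\tfrac cd B_2(\tilde a/N)+\tfrac fd B_2(\tilde a'/N)$ and the $\delta$-corrections, and with the branch of $\arg$ pinned down modulo $2\pi$, is not a routine simplification: it is equivalent to the reciprocity/cocycle identity for these shifted Dedekind--Rademacher sums, i.e.\ it is essentially the proposition itself. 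You flag this bookkeeping as ``the main obstacle'' but do not resolve it, so the decisive step of the proof is missing. If you want to avoid redoing Rademacher-style reciprocity from scratch, the paper's shortcut -- compare the limiting argument of $g_{a,b}$ at the cusp $c/d$ (Siegel's computation) with the expansion of $g_{a',b'}$ at $i\infty$ -- is the more economical way to pin down the root of unity exactly.
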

\begin{proof}
The first statement follows from the infinite product expansion of $g_{a,b}$. The second follows from an explicit calculation of the limit of $\arg g_{a,b}(c/d+it)$ when $t \to 0$. It is carried out in detail in \cite[Proof of Thm 13, pp.135-143]{Siegel:advanced_analytic_nt}\footnote{To compare Siegel's notation with ours set $u=\tilde{a}/N, v=-\tilde{b}/N$. Then $\phi(u,v,z) = -ie^{-\pi i v(u-1)} g_{a,b}$.}. Following Siegel's calculation we find
\begin{align*}
\lim_{t\to 0}\frac{\arg g_{a,b}(c/d+it)}{\pi} &\equiv \frac{c}{d}B_2\left(\frac{\tilde{a}}{N}\right) +\delta(\tilde{a})\mathcal{P}_1\left(\frac{b}{N}\right)
\\ &\quad - 2\sum_{k=1}^d \mathcal{P}_1\left(\frac{1}{d}(k-\frac{\tilde{a}}{N})\right) \mathcal{P}_1\left(\frac{c}{d}(k-\frac{\tilde{a}}{N})-\frac{b}{N}\right)\mod 2\pi.
\end{align*}
Since $(\gamma^* g_{a,b})/g_{a',b'}$ is constant,
\[
g_{a,b}(c/d+it) = \frac{g_{a,b}(\gamma\gamma^{-1}(c/d+it))}{g_{a',b'}(\gamma^{-1}(c/d+it))}
g_{a',b'}(\gamma^{-1}(c/d+it))=
\left(\frac{\gamma^* g_{a,b}}{g_{a',b'}}\right)g_{a',b'}\left(\frac{i}{d^2 t}-\frac{f}{d}\right),
\]
so 
\begin{align*}
\lim_{t\to 0}\arg g_{a,b}(c/d+it) &\equiv  \arg \left(\frac{\gamma^* g_{a,b}}{g_{a',b'}}\right) + \lim_{t\to 0}\arg g_{a',b'}\left(\frac{i}{d^2 t}-\frac{f}{d}\right) \mod 2\pi\\
&\equiv
 \arg \left(\frac{\gamma^* g_{a,b}}{g_{a',b'}}\right) + \arg (1-\delta(\tilde{a}')\zeta_N^{b'}) - \pi \frac{f}{d} B_2(\frac{\tilde{a}'}{N})\mod 2\pi.
\end{align*}
The second term equals $\delta(\tilde{a}')\pi \mathcal{P}_1(b'/N)$.
\end{proof}

Using Proposition \ref{prop:siegel-transformation} we can evaluate any quotient of Siegel units at any cusp.
\begin{prop}\label{prop:evaluate siegel unit}
Let $F=\prod_{i=1}^k g_{a_i,b_i}^{c_i}$ with $c_i\in\Z$ be a quotient of Siegel units. Let $\gamma=\sabcd cedf\in\SL_2(\Z)$ and set $(a_i',b_i') = (a_i,b_i)\gamma$. The quantity $\ord_{c/d} F := \frac{1}{2}\sum_{i=1}^k c_i B_2(\tilde{a}'_i/N)$ does not depend on a choice of $\gamma$. If $\ord_{c/d} F$ is positive, then $F(c/d)=\lim_{t\to 0} F(c/d+it) = 0$ and if $\ord_{c/d} F$ is negative the absolute value of $F(c/d+it)$ diverges to $\infty$. If $\ord_{c/d} F=0$, then
\[
F(c/d) := \lim_{t\to 0} F(c/d+it) =
 \prod_{i=1}^k \left(\frac{\gamma^* g_{a_i,b_i}}{g_{a_i',b_i'}}\right)^{c_i} (1-\delta(\tilde{a_i}')\zeta_N^{b_i'})^{c_i}.
\]
\end{prop}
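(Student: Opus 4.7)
The plan is to reduce the computation at the cusp $c/d$ to an analysis at $i\infty$, by pulling everything back through $\gamma$ and reading off the leading term from the infinite product~(\ref{eqn:siegel-infinite-product}).

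For this, I set $\tau' := \gamma^{-1}(c/d + it)$; a short computation using $cf - de = 1$ gives $\tau' = -f/d + i/(d^2 t)$ (assuming $d \neq 0$; the case $d = 0$, corresponding to the cusp $\infty$, reduces to part (1) of Proposition~\ref{prop:siegel-transformation}). In particular $q' := e^{2\pi i \tau'}$ tends to $0$ as $t \to 0^+$. By Proposition~\ref{prop:siegel-transformation}, each ratio $\zeta_i := (\gamma^* g_{a_i,b_i})/g_{a_i',b_i'}$ is a constant root of unity, so
\[
F(c/d + it) = \prod_{i=1}^k \zeta_i^{c_i} \cdot \prod_{i=1}^k g_{a_i',b_i'}(\tau')^{c_i}.
\]

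Next I would read off the $q'$-expansion of each $g_{a,b}(\tau')$ from (\ref{eqn:siegel-infinite-product}). As $|q'| \to 0$, every factor in the two infinite products tends to $1$, except for the $n=0$ term of the first product when $\tilde{a} = 0$, which contributes $1 - \zeta_N^b$. Uniform convergence of these products on compact subsets of $|q'| < 1$ justifies passing to the limit, yielding
\[
g_{a,b}(\tau') = q'^{B_2(\tilde{a}/N)/2}\bigl(1 - \delta(\tilde{a})\zeta_N^{b} + o(1)\bigr).
\]
Multiplying,
\[
F(c/d + it) = q'^{\,\ord_{c/d} F} \cdot \prod_{i=1}^k \zeta_i^{c_i} (1 - \delta(\tilde{a}_i') \zeta_N^{b_i'})^{c_i} \cdot (1 + o(1)).
\]
The constant prefactor is nonzero, since $(a_i', b_i') = (a_i, b_i)\gamma \neq (0,0)$ forces $1 - \delta(\tilde{a}_i')\zeta_N^{b_i'} \neq 0$ for every $i$. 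The three asymptotic claims for $\ord_{c/d} F > 0$, $< 0$, and $= 0$ then follow immediately by letting $|q'| \to 0$.

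Finally, to verify that $\ord_{c/d} F$ does not depend on the choice of $\gamma$, I observe that any two $\SL_2(\Z)$-matrices sending $\infty$ to $c/d$ have first column $\pm(c,d)^T$, and hence differ by right multiplication by $\pm T^n$, where $T = \sabcd{1}{1}{0}{1}$. Right multiplication by $T^n$ sends $(a_i', b_i')$ to $(a_i', b_i' + n a_i')$, preserving $\tilde{a}_i'$, and the sign change is absorbed by the symmetry $B_2(X) = B_2(1 - X)$. The only technical point is the uniform convergence of the two infinite products near $q' = 0$, which is routine; I anticipate no serious obstacle.
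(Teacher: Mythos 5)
Your proof is correct and follows essentially the same route as the paper: write $F(c/d+it)$ as the product of the constant ratios $(\gamma^* g_{a_i,b_i})/g_{a_i',b_i'}$ from Proposition~\ref{prop:siegel-transformation} with $\prod_i g_{a_i',b_i'}(\gamma^{-1}(c/d+it))^{c_i}$, and read off the limit from the $q$-expansion at $i\infty$, the leading factor being $e^{2\pi i\tau' \ord_{c/d}F}$ times the nonzero constant $\prod_i(1-\delta(\tilde{a}_i')\zeta_N^{b_i'})^{c_i}$. Your explicit verification that $\ord_{c/d}F$ is independent of $\gamma$ (via $\pm T^n$ and $B_2(X)=B_2(1-X)$) is a small addition the paper leaves implicit, and is correct.
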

\begin{proof}
Writing $g_{a_i,b_i} = (\gamma^* g_{a_i,b_i})/g_{a'_i,b'_i} \cdot (g_{a'_i,b'_i} \circ \gamma^{-1})$ as in the proof of Proposition \ref{prop:siegel-transformation}, we get
\begin{align*}
\lim_{t\to 0} F(c/d+it) &= \prod_{i=1}^k \left(\frac{\gamma^* g_{a_i,b_i}}{g_{a_i',b_i'}}\right)^{c_i} \lim_{t\to 0} \prod_{i=1}^k g^{c_i}_{a_i',b_i'}\left(\frac{i}{d^2 t} - \frac{f}{d}\right).
\end{align*}
The last limit equals
\[
\lim_{t\to \infty}\prod_{i=1}^k g^{c_i}_{a_i',b_i'}\left(it - \frac{f}{d}\right) = \left(\prod_{i=1}^k (1-\delta(\tilde{a_i}')\zeta_N^{b_i'})^{c_i}\right) \lim_{t\to\infty} e^{2\pi(- t - i \frac{f}{d})\ord_{c/d}F}.\]
The Proposition follows.
\end{proof}
We denote by $\mathcal{O}(Y(N))^\times$ the group of modular units on the modular curve $Y(N)$. After tensoring with $\Q$, the Siegel units are modular units of level $\Gamma(N)$, so they can be viewed as elements of $\mathcal{O}(Y(N))^\times \otimes \Q$. Inside this $\Q$-vector space, the Siegel units satisfy the transformation law $\gamma^* g_{a,b} = g_{(a,b)\gamma}$ for any $\gamma \in \GL_2(\Z/N\Z)$. Note that the Siegel units $g_{0,b}$ with $b \in \Z/N\Z$, $b \neq 0$ are invariants under the group $\left\{\left(\begin{smallmatrix} * & * \\ 0 & 1 \end{smallmatrix}\right)\right\}$ and therefore descend to modular units on $Y_1(N)$.

\begin{prop}\label{proZ1}
We have the following identity
\begin{equation} \label{Z1}
Z(\tau) = (3+2\sqrt{2}) \prod_{n \geq 1} \frac{(1-q^n \zeta_8^3)^2 (1-q^n \zeta_8^{-3})^2}{(1-q^n \zeta_8)^2 (1-q^n \zeta_8^{-1})^2}=-i\left(\frac{g_{0,3}}{g_{0,1}}\right)^2.
\end{equation}
\end{prop}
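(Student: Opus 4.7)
The chain of equalities in (\ref{Z1}) naturally splits into two parts, and my plan is to dispatch the second (rightmost) equality by a direct computation, and to prove the first by a divisor argument on $X_1(8)$.

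For the second equality, I would expand the Siegel units using the infinite-product definition (\ref{eqn:siegel-infinite-product}) at $a=0$. Since $B_2(0)=1/6$,
\[
g_{0,b}(\tau)=q^{1/12}(1-\zeta_8^b)\prod_{n\ge 1}(1-q^n\zeta_8^b)(1-q^n\zeta_8^{-b}).
\]
In the ratio $(g_{0,3}/g_{0,1})^2$ the $q^{1/6}$ factors cancel, leaving precisely the infinite product in the statement, multiplied by the constant $(1-\zeta_8^3)^2/(1-\zeta_8)^2$. Substituting $\zeta_8=(1+i)/\sqrt 2$ and simplifying gives $(1-\zeta_8^3)^2/(1-\zeta_8)^2 = i(3+2\sqrt 2)$, and multiplying by $-i$ yields $3+2\sqrt 2$, as required.

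For the first equality, I would start from the Lecacheux formula derived in Section \ref{section:elliptic modular surface},
\[
Z(\tau)=\frac{\wp_\tau(1/8)-\wp_\tau(1/2)}{\wp_\tau(1/4)-\wp_\tau(1/2)},
\]
which exhibits $Z$ as a modular unit on $Y_1(8)$. Both $Z$ and $-i(g_{0,3}/g_{0,1})^2$ are nowhere vanishing on $\h$, so the equality reduces to showing that they have the same divisor on $X_1(8)$ and take the same value at $i\infty$. The order at each cusp of the right-hand side is computed via Proposition \ref{prop:evaluate siegel unit}, using $\ord_{c/d}(g_{0,3}^2/g_{0,1}^2)=B_2(\widetilde{3c}/8)-B_2(\widetilde{c}/8)$ enumerated over representatives of the cusps of $X_1(8)$. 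For the left-hand side, Lemma \ref{lem:lecacheux param} identifies the cusps with the fibres over $Z\in\{\infty,0,\pm 1,3\pm 2\sqrt 2\}$, so only the cusps at which $Z=0$ or $Z=\infty$ contribute, and the multiplicities can be read off directly from the Lecacheux formula. Once the divisors are shown to coincide, the two sides differ by a constant, and this constant is pinned down by evaluation at $i\infty$: the infinite product tends to $1$, so the right-hand side tends to $3+2\sqrt 2$, and one checks from the $q$-expansion of $\wp_\tau$ that $Z(i\infty)=3+2\sqrt 2$ as well.

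The main technical obstacle will be the bookkeeping in matching the cusps of $X_1(8)$ to the excluded values of $Z$, and in correctly computing the orders $\ord_{c/d}$ at cusps with $d>1$. A more direct but messier alternative would be to invoke the classical identity $\wp_\tau(u)-\wp_\tau(v)=-\sigma_\tau(u+v)\sigma_\tau(u-v)/(\sigma_\tau(u)\sigma_\tau(v))^2$ together with the relation between $\sigma_\tau$ at $N$-torsion points and Siegel units (the Klein form formula), and let the quasi-period exponential factors cancel in the ratio; I expect the divisor approach to be cleaner and better aligned with the tools developed in the paper.
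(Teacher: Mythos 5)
Your verification of the rightmost equality is correct, and the constant identity you compute, $-i(1-\zeta_8^3)^2/(1-\zeta_8)^2=3+2\sqrt2$, is exactly the one the paper uses. For the first equality, however, the paper proceeds by the route you set aside as the ``messier alternative'': it rewrites $Z$ via $\wp_\tau(z_1)-\wp_\tau(z_2)=-\sigma_\tau(z_1+z_2)\sigma_\tau(z_1-z_2)/(\sigma_\tau(z_1)^2\sigma_\tau(z_2)^2)$, obtaining $Z=\sigma_\tau(1/4)\sigma_\tau(3/8)\sigma_\tau(5/8)/(\sigma_\tau(3/4)\sigma_\tau(1/8)^2)$, and then expands each $\sigma_\tau$ as a $q$-product (the quasi-period factors do cancel because the exponents are balanced). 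So your main route, a divisor comparison on $X_1(8)$, is genuinely different and in principle viable, but as written it leaves the actual mathematical content of that route unfinished.

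Two concrete gaps. First, the divisor argument needs $-i(g_{0,3}/g_{0,1})^2$ to be an honest function on $X_1(8)$: Siegel units satisfy $\gamma^*g_{a,b}=\zeta\,g_{(a,b)\gamma}$ only up to a root of unity (exact invariance holds only in $\mathcal{O}(Y(N))^\times\otimes\Q$), so you must check, e.g.\ with Proposition \ref{prop:siegel-transformation} applied to generators of $\Gamma_1(8)$, that the root-of-unity cocycle is trivial for this particular square of a ratio, or else run the argument over a smaller group where it is and observe that a constant is invariant under everything. Second, the divisor of the left-hand side cannot be ``read off'' from Lemma \ref{lem:lecacheux param}: that lemma gives only the set of $Z$-values at the cusps, not which cusp carries the zero, which the pole, or with what multiplicity. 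Since $Z$ is holomorphic and non-vanishing on $\h$, you do get $\dv(Z)=m\bigl((C_0)-(C_\infty)\bigr)$ with $m=\deg Z$, but pinning $m=1$ requires the Hauptmodul property of $Z$ (asserted only later, in the proof of Lemma \ref{diffeo Ztau}, and ultimately drawn from Bertin--Lecacheux), and identifying $C_0$, $C_\infty$ with the cusps where the Siegel-unit side has its zero and pole requires evaluating the $\wp$-ratio at cusps other than $\infty$ --- essentially the $q$-expansion work you were hoping to avoid. A small slip in the bookkeeping: with the convention of Proposition \ref{prop:evaluate siegel unit}, $\gamma=\sabcd{c}{e}{d}{f}$ sends $(0,b)$ to $(bd,bf)$, so the order at the cusp $c/d$ is $B_2(\widetilde{3d}/8)-B_2(\widetilde{d}/8)$, depending on the denominator $d$, not on $c$. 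Finally, note that fixing the constant already forces you to compute the $q$-expansions of $\wp_\tau(1/8)$, $\wp_\tau(1/4)$, $\wp_\tau(1/2)$ at $\infty$; once those are in hand, the paper's direct computation costs little extra, which is presumably why the authors chose it.
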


\begin{proof}
Note that
\begin{equation*}
Z(\tau) = \frac{\wp_\tau(1/8)-\wp_\tau(1/2)}{\wp_\tau(1/4)-\wp_\tau(1/2)}
\end{equation*}
is a Weierstrass unit in the terminology of Kubert-Lang \cite[Chap. 2, \S 6]{Kubert-Lang}, and Weierstrass units can be expressed in terms of Siegel units \cite[Prop. 2.2]{brunault:K2}. To get the explicit expression we write $Z$ in terms of the Weierstrass $\sigma$-function, using that for $z_1,z_2\in\C$ we have \cite[Corollary 5.6(a)]{Silverman2}
\[
\wp_\tau(z_1)-\wp_\tau(z_2) = - \frac{\sigma_\tau(z_1+z_2)\sigma_\tau(z_1-z_2)}{\sigma_\tau(z_1)^2\sigma_\tau(z_2)^2}.
\] 
We get
\[
Z(\tau) = \frac{\sigma_\tau(1/4) \sigma_\tau(3/8) \sigma_\tau(5/8)}{\sigma_\tau(3/4) \sigma_\tau(1/8)^2}.
\]
We then use the product expansion for $\sigma_\tau(z)$ \cite[Theorem 6.4]{Silverman2} and the identity $3+2\sqrt{2}=-i(1-\zeta_8^3)^2/(1-\zeta_8)^2$ to confirm (\ref{Z1}).
\end{proof}

\subsection{The Eisenstein symbol}

Let $X(N)$ be the compactification of $Y(N)$, and let $X(N)^\infty$ be the scheme of cusps of $X(N)$. We have a bijection
\begin{align*}
\left\{\pm \begin{pmatrix} * & * \\ 0 & 1 \end{pmatrix} \right\} \backslash \GL_2(\Z/N\Z) & \xrightarrow{\cong} X(N)^\infty \\
[g] & \mapsto g^T \cdot \infty. 
\end{align*}
The vector space
\begin{equation*}
V_N^\pm = \left\{f : \GL_2(\Z/N\Z) \to \Q : f \left(\begin{pmatrix} * & * \\ 0 & 1 \end{pmatrix} g \right) = f(g) = \pm f(-g), g \in \GL_2(\Z/N\Z) \right\}
\end{equation*}
is non canonically isomorphic to the $\Q$-vector space with basis $X(N)^\infty$.

Let $k \geq 0$ be an integer, and let $E(N)^k$ be the $k$-fold fibre product of $E(N)$ over $Y(N)$. We denote motivic cohomology by $H^{\cdot}_{\mathcal{M}}$. Beilinson \cite{Beilinson2} constructed a residue map
\begin{equation*}
\Res^k_{\mathcal{M}} : H^{k+1}_{\mathcal{M}}(E(N)^k, \Q(k+1)) \to V_N^{(-1)^k}
\end{equation*}
generalising the divisor map $H^1_{\mathcal{M}}(Y(N),\Q(1)) \to V_N^+$ in the case $k=0$. In the case $k \geq 1$, Beilinson also constructed a canonical right inverse of $\Res^k_{\mathcal{M}}$, the \emph{Eisenstein map}
\begin{equation*}
E^k_{\mathcal{M}} : V_N^{(-1)^k} \to H^{k+1}_{\mathcal{M}}(E(N)^k, \Q(k+1)).
\end{equation*}

\begin{dfn}
For any integer $k \geq 0$, the horospherical map
\begin{equation*}
\lambda^k : \Q[(\Z/N\Z)^2] \to V_N^{(-1)^k}
\end{equation*}
is defined by
\begin{equation*}
\lambda^k(\phi)(g) = \sum_{x_1,x_2 \in \Z/N\Z} \phi \left(g^{-1} \begin{pmatrix} x_1 \\ x_2 \end{pmatrix} \right) B_{k+2} \left(\left\{\frac{x_2}{N}\right\} \right) \qquad (g \in \GL_2(\Z/N\Z))
\end{equation*}
where $B_{k+2}$ is the $(k+2)$-th Bernoulli polynomial, and $\{t\} = t-\lfloor t \rfloor$ is the fractional part of $t$.
\end{dfn}

\begin{dfn}\label{def Eis}
For any integer $k \geq 1$ and any $u \in (\Z/N\Z)^2$, the \emph{Eisenstein symbol}
\begin{equation*}
\Eis^k(u) \in H^{k+1}_{\mathcal{M}}(E(N)^k,\Q(k+1))
\end{equation*}
is defined by $\Eis^k(u) = E^k_{\mathcal{M}} \circ \lambda^k (\phi_u)$, where $\phi_u = [u]$ is the characteristic function of $\{u\}$.
\end{dfn}

Note that $\Eis^k(-u) = (-1)^k \Eis^k(u)$ for any $u \in (\Z/N\Z)^2$.

In the case $k=0$, the image of the divisor map $\Res^0_\mathcal{M}$ is the space $(V_N^+)^0$ of degree 0 divisors on $X(N)^\infty$ by the Manin-Drinfeld theorem. It can be shown that the Siegel units provide a canonical right inverse $E^0_{\mathcal{M}} : (V_N^+)^0 \to H^1(Y(N),\Q(1))$ of $\Res^0_\mathcal{M}$. More precisely, the horospherical map $\lambda^0$ induces a map $\Q[(\Z/N\Z)^2 \backslash \{0\}] \to (V_N^+)^0$, and we have
\begin{equation*}
E^0_{\mathcal{M}} \circ \lambda^0(\phi_u) = g_u \otimes \frac{2}{N} \qquad (u \in (\Z/N\Z)^2, u \neq (0,0)).
\end{equation*}
In analogy with Definition \ref{def Eis}, we put $\Eis^0(u) = g_u \otimes (2/N)$ for any $u \in (\Z/N\Z)^2$, $u \neq (0,0)$. Proposition \ref{proZ1} can thus be rewritten as follows.

\begin{prop}\label{proZ2}
Let $\pi : Y(8) \to Y_1(8)$ be the canonical projection map. Then
\begin{equation*}
\pi^*(Z) = 8 (\Eis^0(0,3)- \Eis^0(0,1)).
\end{equation*}
\end{prop}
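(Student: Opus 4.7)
The plan is to deduce the identity directly from Proposition~\ref{proZ1} together with the definition of $\Eis^0$, once one observes that after tensoring with $\Q$ every root of unity in $\mathcal{O}(Y(N))^\times$ becomes trivial.

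First I would recall that Proposition~\ref{proZ1} gives
\[
Z = -i\left(\frac{g_{0,3}}{g_{0,1}}\right)^2
\]
as an identity of meromorphic functions on $\h$. Since the Siegel units $g_{0,b}$ are invariant under the subgroup $\bigl\{\sabcd{*}{*}{0}{1}\bigr\}$, they descend to modular units on $Y_1(8)$, and pulling back along $\pi : Y(8) \to Y_1(8)$ the same identity holds in $\mathcal{O}(Y(8))^\times$. Now passing to $\mathcal{O}(Y(8))^\times \otimes \Q$, the factor $-i$, being a root of unity in $\C^\times \subset \mathcal{O}(Y(8))^\times$, is torsion and hence becomes trivial. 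Therefore
\[
\pi^* Z = \left(\frac{g_{0,3}}{g_{0,1}}\right)^2 \qquad \text{in } \mathcal{O}(Y(8))^\times \otimes \Q.
\]

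Next I would unwind the definition $\Eis^0(u) = g_u \otimes (2/N)$ with $N = 8$. Writing the tensor product multiplicatively, one has $8\,\Eis^0(u) = g_u^2$, so
\[
8\bigl(\Eis^0(0,3) - \Eis^0(0,1)\bigr) = \frac{g_{0,3}^2}{g_{0,1}^2} = \left(\frac{g_{0,3}}{g_{0,1}}\right)^2,
\]
which by the previous paragraph equals $\pi^* Z$. There is no substantive obstacle here given Proposition~\ref{proZ1}; the only care required is the bookkeeping between the additive and multiplicative conventions on $\mathcal{O}(Y(8))^\times \otimes \Q$, and the observation that the spurious factor $-i$ coming from the normalisation in Proposition~\ref{proZ1} is invisible after rationalisation.
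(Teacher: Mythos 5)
Your proposal is correct and matches the paper's own (implicit) argument: the paper states Proposition \ref{proZ2} as a direct rewriting of Proposition \ref{proZ1} using the convention $\Eis^0(u)=g_u\otimes(2/N)$ with $N=8$, the constant $-i$ disappearing in $\mathcal{O}(Y(8))^\times\otimes\Q$ because roots of unity are torsion. Your bookkeeping $8\,\Eis^0(u)=g_u\otimes 2$, hence $8(\Eis^0(0,3)-\Eis^0(0,1))=(g_{0,3}/g_{0,1})^2=\pi^*Z$, is exactly right.
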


\subsection{Definition of the Eisenstein symbol in the case $k=1$}

Let us now recall the construction of the Eisenstein map in the case $k=1$ following Deninger--Scholl \cite{Deninger-Scholl}. The group $(\Z/N\Z)^2 \cong E(N)[N]$ acts by translation on $E(N)$, and the group $\{\pm 1\}$ acts on $E(N)$ by means of the elliptic involution $\iota : E(N) \to E(N)$. We thus get an action of $H:=(\Z/N\Z)^2 \rtimes \{\pm 1\}$ on $E(N)$. Let $\varepsilon : H \to \{\pm 1\}$ be the canonical projection.

\begin{dfn}
For any $\Q[H]$-module $M$, we denote by $M^\varepsilon$ the $\varepsilon$-eigenspace of $M$, and we denote by $\Pi_\varepsilon : M \to M^\varepsilon$ the projector defined by
\begin{equation*}
\Pi_\varepsilon(m) = \frac{1}{|H|} \sum_{h \in H} \varepsilon(h) h \cdot m \qquad (m \in M).
\end{equation*}
\end{dfn}

Let $U_N = E(N) - E(N)[N]$ be the complement of the $N$-torsion subgroup of $E(N)$. It is stable by $H$. The following theorem is due to Beilinson \cite[Theorem 3.1.1]{Beilinson2}.

\begin{thm}\label{thm H2E}
\begin{enumerate}
\item The group $(\Z/N\Z)^2$ acts trivially on $H^2_{\mathcal{M}}(E(N),\Q(2))$. In particular
\begin{equation*}
H^2_{\mathcal{M}}(E(N),\Q(2))^\varepsilon = H^2_{\mathcal{M}}(E(N),\Q(2))^-,
\end{equation*}
where $(\cdot)^-$ denotes the subspace of anti-invariants under the elliptic involution.
\item The restriction map from $E(N)$ to $U_N$ induces an isomorphism
\begin{equation*}
H^2_{\mathcal{M}}(E(N),\Q(2))^- \xrightarrow{\cong} H^2_{\mathcal{M}}(U_N,\Q(2))^\varepsilon.
\end{equation*}
\end{enumerate}
\end{thm}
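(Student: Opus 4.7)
My plan is to treat the two parts of the theorem separately. Part~(1) concerns the vanishing of the torsion-translation action on $H^2_{\mathcal{M}}(E(N),\Q(2))$ and relies on the geometry of the abelian scheme $p: E(N) \to Y(N)$; Part~(2) is a comparison between the motivic cohomology of $E(N)$ and of the complement $U_N$, which I would attack via the Gysin/localization sequence combined with an equivariance argument.

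For Part~(1), the fundamental identity is
\[
[N] \circ t_P = [N] \qquad (P \in E(N)[N]),
\]
which follows from $NP = 0$. Pulling back on motivic cohomology gives $t_P^* \circ [N]^* = [N]^*$, so the image of $[N]^*$ is contained in the $(\Z/N\Z)^2$-invariant subspace. Since $[N]$ is finite \'etale of degree $N^2$ we also have $[N]_* \circ [N]^* = N^2 \cdot \mathrm{id}$, so $[N]^*$ is injective. These two facts alone are not quite sufficient, because $[N]^*$ does not literally commute with the translations. To finish, I would invoke the Chow--K\"unneth decomposition of the relative motive of $p$ (Deninger--Murre, Shermenev), giving a canonical splitting
\[
h(E(N)/Y(N)) = h^0 \oplus h^1 \oplus h^2
\]
on which $[N]^*$ acts by $1$, $N$, $N^2$ respectively, and on which translations by torsion sections act trivially (translations act trivially on $h^1$ as they are homotopic to the identity in a connected commutative group scheme). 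Assembling these pieces into a direct sum decomposition of $H^2_{\mathcal{M}}(E(N),\Q(2))$ yields the triviality of the $(\Z/N\Z)^2$-action. The second assertion of~(1) is then immediate: once $(\Z/N\Z)^2$ acts trivially, the $\varepsilon$-eigenspace reduces to the $(-1)$-eigenspace of the elliptic involution.

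For Part~(2), let $Z := E(N)[N]$, a smooth closed subscheme of codimension~$1$ in $E(N)$ with complement $U_N$. The Gysin/localization sequence in motivic cohomology yields
\[
H^0_{\mathcal{M}}(Z,\Q(1)) \to H^2_{\mathcal{M}}(E(N),\Q(2)) \to H^2_{\mathcal{M}}(U_N,\Q(2)) \to H^1_{\mathcal{M}}(Z,\Q(1)) \to H^3_{\mathcal{M}}(E(N),\Q(2)).
\]
This sequence is $H$-equivariant, and since $H$ is finite and we work with $\Q$-coefficients the functor $(\cdot)^\varepsilon$ is exact, so exactness is preserved after passing to $\varepsilon$-eigenspaces. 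It therefore suffices to show that $H^i_{\mathcal{M}}(Z,\Q(1))^\varepsilon = 0$ for $i=0,1$. Writing $Z = \bigsqcup_{P \in (\Z/N\Z)^2} P \cdot Y(N)$ as a disjoint union of $N$-torsion sections, the group $(\Z/N\Z)^2$ permutes the components freely and transitively, so its invariants in $H^i_{\mathcal{M}}(Z,\Q(1))$ are canonically identified with $H^i_{\mathcal{M}}(Y(N),\Q(1))$, realised on the zero section. Since $\iota$ fixes the zero section pointwise it acts trivially on this copy, so its $(-1)$-eigenspace vanishes, as required.

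The main obstacle is Part~(1): the absorption identity $t_P^* [N]^* = [N]^*$ on its own only says that the image of $[N]^*$ lies in the invariants, and injectivity of $[N]^*$ is not enough to force triviality of the $(\Z/N\Z)^2$-action, precisely because $[N]^*$ fails to be equivariant for the translation action. The essential input is the canonical Chow--K\"unneth decomposition of an abelian scheme, which replaces the $(\Z/N\Z)^2$-module $H^2_{\mathcal{M}}$ by a direct sum of pieces where $[N]^*$ acts by distinct powers of $N$ and on which torsion translations are manifestly trivial. Once this is available, Part~(2) is essentially routine bookkeeping with the Gysin sequence.
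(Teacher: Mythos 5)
The paper itself does not prove this statement: it is quoted directly from Beilinson (the cited Theorem 3.1.1), so your argument is necessarily an independent reconstruction rather than a variant of an in-paper proof. Your Part (2) is sound and is the standard argument: the localization sequence for the smooth codimension-one subscheme $E(N)[N]$, exactness of $(\cdot)^\varepsilon$ for a finite group acting on $\Q$-vector spaces, and the observation that the translation-invariants of $H^i_{\mathcal{M}}(E(N)[N],\Q(1))$ form a diagonal copy of $H^i_{\mathcal{M}}(Y(N),\Q(1))$ on which $\iota$ acts trivially (because all the identifications are over $Y(N)$ and $\iota$ commutes with the projection to the base), so the $\varepsilon$-parts of the boundary terms vanish.

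In Part (1), however, the justification of the decisive step is not valid as written. You claim that torsion translations act trivially on the $h^1$-piece ``as they are homotopic to the identity in a connected commutative group scheme.'' Topological homotopy gives no control over motivic cohomology or Chow-level invariants: translation by an arbitrary point of an elliptic curve is also homotopic to the identity, yet $t_a^*[0]=[-a]\neq[0]$ in $\mathrm{CH}_0\otimes\Q$ whenever $a$ is non-torsion. So the parenthetical is exactly the assertion that needs proof, and the reason you give would prove too much; moreover, if translations really acted trivially on each Chow--K\"unneth piece for such a soft reason, you would never have needed the identity $[N]\circ t_P=[N]$ in the first place.

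The repair is immediate and uses only ingredients you already have. The Deninger--Murre relative Chow--K\"unneth decomposition (whose projectors are relative correspondences on the proper scheme $E(N)\times_{Y(N)}E(N)$, hence do act on $H^2_{\mathcal{M}}(E(N),\Q(2))$) makes $[N]^*$ diagonalizable on $H^2_{\mathcal{M}}(E(N),\Q(2))$ with eigenvalues $N^i$, $i=0,1,2$, all nonzero. If $[N]^*v=N^i v$, then your absorption identity $t_P^*[N]^*=[N]^*$ gives $t_P^*v=N^{-i}\,t_P^*[N]^*v=N^{-i}[N]^*v=v$. This needs neither equivariance of the projectors under translations nor any a priori claim about the action on $h^1$; it is the torsion relation $[N]\circ t_P=[N]$ combined with invertibility of the eigenvalues that does the work. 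With this substitution Part (1) is complete, the ``in particular'' statement follows since $\varepsilon$ is trivial on translations, and your Part (2) then yields the theorem.
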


Consider now the map given by the cup-product
\begin{align*}
\alpha : \mathcal{O}(U_N)^\times \otimes \mathcal{O}(U_N)^\times & \to H^2_\mathcal{M}(U_N,\Q(2))\\
g_0 \otimes g_1 & \mapsto \{ \iota^* g_0, g_1 \}.
\end{align*}
By \cite[Lemma 4.8]{Deninger-Scholl}, we have an exact sequence
\begin{equation*}
0 \to \mathcal{O}(Y(N))^\times \otimes \Q \to \mathcal{O}(U_N)^\times \otimes \Q \xrightarrow{\dv} \Q[(\Z/N\Z)^2]^0 \to 0
\end{equation*}
where $\dv$ is the divisor map, and $\Q[(\Z/N\Z)^2]^0$ is the space of degree 0 divisors on $(\Z/N\Z)^2$. Let $\mu$ denote the product in the group algebra $\Q[(\Z/N\Z)^2]$. We have a diagram
\begin{equation}\label{diag alpha}
\begin{tikzcd}
\mathcal{O}(U_N)^\times \otimes \mathcal{O}(U_N)^\times \otimes \Q \arrow{r}{\alpha} \arrow{d}{\dv \otimes \dv} & H^2_\mathcal{M}(U_N,\Q(2)) \\
\bigotimes^2 \Q[(\Z/N\Z)^2]^0 \arrow{r}{\mu} & \Q[(\Z/N\Z)^2]^0.
\end{tikzcd}
\end{equation}

\begin{lem}\label{Pi alpha factors}
The map $\Pi_\varepsilon \circ \alpha$ factors through the surjective map $\dv \otimes \dv$.
\end{lem}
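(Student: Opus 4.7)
The plan is to show that $\Pi_\varepsilon \circ \alpha$ vanishes on $\ker(\dv \otimes \dv)$, which is equivalent to the factorization claim. From the short exact sequence $0 \to \mathcal{O}(Y(N))^\times \otimes \Q \to \mathcal{O}(U_N)^\times \otimes \Q \xrightarrow{\dv} \Q[(\Z/N\Z)^2]^0 \to 0$, a splitting argument shows that this kernel is spanned by pure tensors $g_0 \otimes g_1$ with at least one of $g_0, g_1$ lying in $\mathcal{O}(Y(N))^\times \otimes \Q$, so it suffices to verify the vanishing on such tensors.

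Consider first the case $g_0 \in \mathcal{O}(Y(N))^\times \otimes \Q$. Then $\iota^* g_0 = g_0$ (since $\iota$ is trivial on $Y(N)$) and $t^* g_0 = g_0$ for every translation $t \in T := (\Z/N\Z)^2$, so $\alpha(g_0 \otimes g_1) = \{g_0, g_1\}$. Writing $H = T \rtimes \{\pm 1\}$ and using the bilinearity of the Milnor symbol in the second factor, the definition of $\Pi_\varepsilon$ expands to
\begin{equation*}
\Pi_\varepsilon\{g_0,g_1\} = \frac{1}{2|T|}\sum_{t \in T}\bigl(\{g_0, t^*g_1\} - \{g_0, \iota^*t^*g_1\}\bigr) = \frac{1}{2|T|}\Bigl\{g_0,\; N(g_1)/\iota^*N(g_1)\Bigr\},
\end{equation*}
where $N(g_1) := \prod_{t \in T} t^* g_1$ is the norm along the translation action.

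The key claim is that $N(g_1)$ lies in $\mathcal{O}(Y(N))^\times \otimes \Q$. First, along any $N$-torsion section $s$, the translation-invariance of $T$-orbits on $E(N)[N]$ gives $\ord_s N(g_1) = \sum_{t \in T} \ord_{t(s)} g_1 = \sum_{s' \in E(N)[N]} \ord_{s'} g_1 = \deg \dv(g_1) = 0$, since $\dv(g_1)$ has degree zero. Hence $N(g_1)$ extends to a unit in $\mathcal{O}(E(N))^\times \otimes \Q$. Second, the structural morphism $p : E(N) \to Y(N)$ is proper and smooth with connected (elliptic) fibers, so $p_*\mathcal{O}_{E(N)} = \mathcal{O}_{Y(N)}$ and therefore $\mathcal{O}(E(N))^\times = \mathcal{O}(Y(N))^\times$. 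Consequently $N(g_1) \in \mathcal{O}(Y(N))^\times \otimes \Q$, and since $\iota$ acts trivially on $Y(N)$ we get $\iota^* N(g_1) = N(g_1)$. The Milnor symbol $\{g_0, 1\}$ vanishes, so $\Pi_\varepsilon\{g_0,g_1\} = 0$.

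The symmetric case $g_1 \in \mathcal{O}(Y(N))^\times \otimes \Q$ is handled by an entirely parallel computation: one finds $\Pi_\varepsilon\{\iota^* g_0, g_1\} = (2|T|)^{-1}\{\iota^*N(g_0)/N(g_0), g_1\}$ (the sign swap in the $\iota$-twist arising because the translations get reindexed by $t \mapsto -t$ in the anti-invariant part), and the same norm-and-descent argument identifies $N(g_0)$ with an element of $\mathcal{O}(Y(N))^\times \otimes \Q$, making the ratio trivial. The main obstacle to implementing this plan is the last identification $\mathcal{O}(E(N))^\times = \mathcal{O}(Y(N))^\times$, which is where the geometry of the universal elliptic curve (projectivity and connectedness of the fibres of $p$) is used essentially; once this input is in hand the rest of the argument is a bookkeeping exercise in the $H$-action on Milnor symbols.
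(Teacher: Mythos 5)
Your proof is correct and follows essentially the same route as the paper: reduce to tensors with one factor in $\mathcal{O}(Y(N))^\times \otimes \Q$, average over the translations to produce the norm $N(g_1)=\prod_{a} t_a^* g_1$, observe that it has trivial divisor and hence comes from the base, and then let the $\{\pm 1\}$-averaging kill the symbol. The only differences are presentational (you treat the two cases separately instead of invoking antisymmetry, and package the two averages into the quotient $N(g_1)/\iota^* N(g_1)$), so there is nothing substantive to add.
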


\begin{proof}
It suffices to show that for any modular unit $u \in \mathcal{O}(Y(N))^\times$ and any $g \in \mathcal{O}(U_N)^\times$, we have $\Pi_\varepsilon(\{u,g\})=0$. Let us first average $\{u,g\}$ over the translations: we get
\begin{equation*}
\frac{1}{N^2} \sum_{a \in (\Z/N\Z)^2} t_a^* \{u,g\} = \frac{1}{N^2} \sum_{a \in (\Z/N\Z)^2} \{u, t_a^* g\} = \frac{1}{N^2} \{u,h\}
\end{equation*}
with $h = \prod_{a \in (Z/N\Z)^2} t_a^* g$. The function $h$ has trivial divisor, so it must come from the base. Now averaging over the group $\{\pm 1\}$, we get $\{u,h\} - \iota^*\{u,h\} = \{u,h\}-\{u,h\}=0$.
\end{proof}

For any $a \in (\Z/N\Z)^2$, let $t_a : U_N \to U_N$ be the correponding translation. We endow $\mathcal{O}(U_N)^\times \otimes \mathcal{O}(U_N)^\times$ with the action of $H$ defined by $a \cdot (g_0 \otimes g_1) = (t_{-a}^* g_0) \otimes (t_a^* g_1)$ for any $a \in (\Z/N\Z)^2$, and $(-1) \cdot (g_0 \otimes g_1) = - (g_1 \otimes g_0)$. We also endow $\bigotimes^2 \Q[(\Z/N\Z)^2]^0$ with the action of $H$ defined by $a \cdot (d_0 \otimes d_1) = ([a] d_0) \otimes ([-a] d_1)$ and $(-1)\cdot (d_0 \otimes d_1) = -(d_1 \otimes d_0)$. Finally, we endow $\Q[(\Z/N\Z)^2]^0$ with the trivial action of $(\Z/N\Z)^2$ and the action of $(-1)$ given by $-\operatorname{id}$. In this way the diagram (\ref{diag alpha}) becomes $H$-equivariant.

\begin{lem}\label{mu iso} The map $\mu$ induces an isomorphism
\begin{equation*}
\begin{tikzcd}
\left(\bigotimes^2 \Q[(\Z/N\Z)^2]^0\right)^\varepsilon \cong \Q[(\Z/N\Z)^2]^0.
\end{tikzcd}
\end{equation*}
\end{lem}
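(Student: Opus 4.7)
The plan is to pass to the ambient group algebra and unpack the action concretely. Set $G = (\Z/N\Z)^2$ and $A = \Q[G]^0$. I would identify $\Q[G] \otimes \Q[G] = \Q[G \times G]$; under this identification the action $a \cdot (d_0 \otimes d_1) = [a]d_0 \otimes [-a]d_1$ becomes the anti-diagonal translation $(a \cdot f)(g_0, g_1) = f(g_0 - a, g_1 + a)$. A function $f$ is $G$-invariant if and only if it depends only on $g_0 + g_1$, which gives a canonical identification $(\Q[G] \otimes \Q[G])^G \cong \Q[G]$ via $h \mapsto f_h$ with $f_h(g_0, g_1) = h(g_0 + g_1)$.

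Next I would observe that on this invariant subspace the involution $(-1)$ acts automatically as $-\mathrm{id}$: swapping $g_0$ and $g_1$ preserves $g_0 + g_1$, while the definition of $(-1) \cdot (d_0 \otimes d_1) = -(d_1 \otimes d_0)$ already carries a sign. Hence the $\varepsilon$-eigenspace in $\Q[G] \otimes \Q[G]$ coincides with the $G$-invariants. A direct computation then yields $\mu(f_h)(g) = \sum_{g_0 + g_1 = g} h(g_0 + g_1) = N^2 h(g)$, so under the identification $\mu$ is multiplication by $N^2$, hence an isomorphism over $\Q$.

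To descend to $A \otimes A$, I would use the decomposition $\Q[G] = A \oplus \Q \cdot e$ with $e = \sum_{g \in G} [g]$, which splits $\Q[G]$ as a $G$-module. This induces an $H$-stable decomposition of $\Q[G] \otimes \Q[G]$ into four summands. The mixed pieces $A \otimes \Q e$ and $\Q e \otimes A$ contribute nothing to the $G$-invariants, since on each the $G$-action reduces to the translation action on $A$ and $A^G = 0$. The summand $\Q e \otimes \Q e$ corresponds under the identification to $\Q e \subset \Q[G]$. Consequently $(A \otimes A)^\varepsilon$ corresponds precisely to $A$, and $\mu$ restricts to $N^2 \cdot \mathrm{id}: A \to A$, which is the desired isomorphism.

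There is no real obstacle here; the only care required is to keep the actions straight (anti-diagonal translation, and the additional sign in the action of $(-1)$), so that the $\varepsilon$-eigenspace is correctly recognized as the full space of $G$-invariants rather than a proper subspace. Once that is in place, the Fourier-theoretic description of the invariants as functions of $g_0 + g_1$ makes the rest routine.
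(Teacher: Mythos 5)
Your proof is correct, but it takes a genuinely different route from the paper's. You linearize everything in the ambient group algebra: identifying $\Q[G]\otimes\Q[G]$ with $\Q[G\times G]$, you recognize the invariants of the anti-diagonal translation as the functions of $g_0+g_1$, observe that the swap-with-sign action of $-1$ is then automatically $-\mathrm{id}$ (so the $\varepsilon$-eigenspace is exactly the $G$-invariants), read off $\mu$ as $N^2$ times this identification, and descend to the augmentation ideal via the splitting $\Q[G]=A\oplus\Q e$ (strictly speaking $-1$ swaps the two mixed summands rather than preserving them, but you only use $G$-invariance there, so this is harmless). The paper instead stays inside $(I\otimes I)^\varepsilon$ and writes down an explicit inverse: with $d_0=N^2[0]-\sum_{u}[u]$ it sets $\theta(d)=\Pi_\varepsilon(d\otimes d_0)$ and checks $\mu\circ\theta=\theta\circ\mu=N^2\,\mathrm{id}$, the second identity resting on the trilinear relation $\Pi_\varepsilon((dd')\otimes d'')=\Pi_\varepsilon(d\otimes(d'd''))$. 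Your argument buys a transparent structural description of the $\varepsilon$-eigenspace (the image of $h\mapsto f_h$, $f_h(g_0,g_1)=h(g_0+g_1)$, with $h$ of degree $0$), while the paper's buys a closed formula for the inverse, namely $\frac{1}{N^2}\Pi_\varepsilon(\,\cdot\,\otimes d_0)$. One small step you should make explicit: from your decomposition you conclude that $(A\otimes A)^\varepsilon$ ``corresponds precisely to $A$'', but a priori it only corresponds to some complement of $\Q e$ in $\Q[G]$. To pin it down, note that $\mu$ maps $A\otimes A$ into $A$ (the augmentation map is multiplicative; equivalently, summing $f_h$ over $G\times G$ gives $N^2\sum_g h(g)=0$), so under your identification $(A\otimes A)^\varepsilon$ lands inside $A$; injectivity plus the dimension count $\dim (A\otimes A)^\varepsilon=N^2-1=\dim A$ then makes the restriction of $\mu$ an isomorphism onto $A$, which is the statement of the lemma.
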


\begin{proof}
Let $I = \Q[(\Z/N\Z)^2]^0$ denote the augmentation ideal of $\Q[(\Z/N\Z)^2]$. Note that with our definition $I^\varepsilon = I$, so that we have a map $\mu : (I \otimes I)^\varepsilon \to I$. We shall now construct an inverse of $\mu$. Consider the divisor $d_0 = N^2 [0] - \sum_{u \in (\Z/N\Z)^2} [u] \in I$, and define $\theta : I  \to (I \otimes I)^\varepsilon$ by $\theta(d) = \Pi_\varepsilon (d \otimes d_0)$. A simple computation shows that $\mu \circ \theta(d) = d d_0 = N^2 d$.

Now let $x = \sum_i d_i \otimes d'_i \in (I \otimes I)^\varepsilon$. We have
\begin{equation*}
\theta \circ \mu(x) = \sum_i \theta(d_i d'_i) = \sum_i \Pi_\varepsilon((d_i d'_i) \otimes d_0).
\end{equation*}
We claim that for any divisors $d,d',d''$ on $(\Z/N\Z)^2$, we have
\begin{equation}\label{Pi dd'd''}
\Pi_\varepsilon((dd') \otimes d'') = \Pi_\varepsilon(d \otimes (d'd'')),
\end{equation}
where we have extended the action of $H$ to $\bigotimes^2 \Q[(\Z/N\Z)^2]$ in the natural way. Indeed, formula (\ref{Pi dd'd''}) is trilinear with respect to $d,d',d''$, and it suffices to check it in the case $d=[u]$, $d'=[u']$ and $d''=[u'']$, where it is obvious. It follows that
\begin{equation*}
\theta \circ \mu(x)= \sum_i \Pi_\varepsilon(d_i \otimes (d'_i d_0)) = N^2 \sum_i \Pi_\varepsilon(d_i \otimes d'_i) = N^2 x.
\end{equation*}
This shows that $(1/N^2) \theta$ is an inverse of $\mu$.
\end{proof}

By considering the $\varepsilon$-eigenspaces in the diagram (\ref{diag alpha}) and using Lemmas \ref{Pi alpha factors} and \ref{mu iso}, we get a unique map $\mathcal{E}^1_{\mathcal{M}} : \Q[(\Z/N\Z)^2]^0 \to H^2_{\mathcal{M}}(E(N),\Q(2))^-$ making the following diagram commute
\begin{equation}\label{diag alpha eps}
\begin{tikzcd}
\mathcal{O}(U_N)^\times \otimes \mathcal{O}(U_N)^\times \otimes \Q \arrow{r}{\Pi_\varepsilon \circ \alpha} \arrow{d}{\dv \otimes \dv} & H^2_\mathcal{M}(E(N),\Q(2))^- \\
\bigotimes^2 \Q[(\Z/N\Z)^2]^0 \arrow{r}{\frac{1}{N^2} \cdot \mu} & \Q[(\Z/N\Z)^2]^0 \arrow{u}{\mathcal{E}^1_\mathcal{M}}.
\end{tikzcd}
\end{equation}

Note that we choose the bottom map to be $(1/N^2)\cdot \mu$ so that $\mathcal{E}^1_\mathcal{M}$ coincides with the map $\mathcal{E}^1_P$ from \cite{Schappacher-Scholl}. To pass from $\mathcal{E}^1_\mathcal{M}$ to the Eisenstein map $E^1_\mathcal{M}$, we use the horospherical map $\lambda^1$.

\begin{thm}\cite{Beilinson2, Schappacher-Scholl}\label{thm E1}
\begin{enumerate}
\item The horospherical map $\lambda^1 : \Q[(\Z/N\Z)^2]^0 \to V_N^-$ is surjective.
\item The map $\mathcal{E}^1_\mathcal{M}$ factors through $\lambda^1$.
\item We have
\begin{equation*}
\Res^1_\mathcal{M} \circ \mathcal{E}^1_\mathcal{M} = \pm \frac{N^3}{3} \lambda^1.
\end{equation*}
\end{enumerate}
\end{thm}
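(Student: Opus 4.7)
The plan is to prove the three assertions in turn, with the main work being an explicit residue computation for $\mathcal{E}^1_\mathcal{M}$ at each cusp of $Y(N)$.

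The core input is the behaviour of Siegel units at cusps: by Proposition~\ref{prop:evaluate siegel unit}, the order of $g_u$ at the cusp represented by $\gamma \in \SL_2(\Z)$ is $\tfrac12 B_2(\tilde a'/N)$, where $(a',b') = u\bar\gamma$. The residue at a cusp $c$ of a cup product $\{f_1,f_2\} \in H^2_\mathcal{M}(U_N,\Q(2))$ is given by the tame symbol, which for two Siegel units reduces, modulo roots of unity killed after tensoring with $\Q$, to $\pm \ord_c(f_1)\ord_c(f_2)$. For part (3) I would use diagram~\eqref{diag alpha eps} to expand $\Res^1_\mathcal{M}\circ \mathcal{E}^1_\mathcal{M}([v])(g)$, for $g \in \GL_2(\Z/N\Z)$, as $\tfrac{1}{N^2}$ times a sum over pairs $(u_0,u_1) \in ((\Z/N\Z)^2)^2$ with $u_0+u_1=v$ of products of two $B_2$-values evaluated at first coordinates of $u_i \bar g$. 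The key ingredient is then the classical Bernoulli convolution identity
\[
\sum_{\substack{x_1,x_2 \in \Z/N\Z \\ x_1 + x_2 = y}} B_2\!\left(\frac{\tilde x_1}{N}\right) B_2\!\left(\frac{\tilde x_2}{N}\right) = \alpha_N\, B_3\!\left(\frac{\tilde y}{N}\right) + c_N,
\]
with $\alpha_N$ proportional to $N$ and $c_N$ a constant annihilated by $\Pi_\varepsilon$. Combining this with the remaining summation on the second coordinate and the $1/N^2$ prefactor reproduces precisely $\pm (N^3/3)\,\lambda^1(\phi_v)(g)$.

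For part (1), the $\GL_2(\Z/N\Z)$-equivariance of $\lambda^1$ reduces surjectivity to showing that the image generates $V_N^-$ as a $\GL_2$-module. A direct computation shows $\lambda^1(\phi_u)(e) = B_3(\tilde u_2/N)$, and the non-constancy of $B_3$ on the fractions $\tilde k/N$ produces enough linearly independent elements to span, after translation by $\GL_2(\Z/N\Z)$, all of $V_N^-$. For part (2), I would identify $\ker \lambda^1$ with the span of distribution relations coming from $\sum_{k=0}^{M-1} B_3(x + k/M) = M^{-2}\,B_3(Mx)$ for $M \mid N$ (and their analogues under the $\GL_2$-action). The matching distribution relations for Siegel units, together with Lemma~\ref{Pi alpha factors} and the construction of $\mathcal{E}^1_\mathcal{M}$ through diagram~\eqref{diag alpha eps}, lift these divisor-level relations to the cup-product level and force $\mathcal{E}^1_\mathcal{M}(\ker \lambda^1) = 0$, giving the desired factorisation.

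The hard part is (3): correctly tracking the sign (which depends on conventions for the tame symbol, the elliptic involution $\iota$, and the residue map) and the normalisation constant, in particular reconciling the two-variable Bernoulli convolution with the single-variable $B_3$ of $\lambda^1$, as well as verifying that the error term $c_N$ really does drop after $\Pi_\varepsilon$-symmetrisation. Once (3) is in hand, parts (1) and (2) are comparatively formal consequences of the distribution relations available on both sides.
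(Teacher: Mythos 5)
First, note that the paper does not prove this theorem at all: it is quoted from Beilinson and Schappacher--Scholl, and the accompanying remark only locates the proofs ((1) in \cite[7.5]{Schappacher-Scholl}, (2) via \cite[Theorem 7.4]{Schappacher-Scholl}) and corrects the normalisation constant to $N^3/3$. So your sketch is not an alternative to an argument in the paper but an attempt to reprove the cited result, and as it stands it has genuine gaps. The most serious one is in part (3). The map $\Res^1_\mathcal{M}$ is Beilinson's residue for the \emph{open universal elliptic curve} $E(N)$ along the degenerate fibres (N\'eron polygons) over the cusps, applied to cup products of units on $U_N=E(N)\setminus E(N)[N]$ whose divisors are supported on torsion \emph{sections}; it is not the tame symbol of two Siegel units on the modular curve, and Proposition \ref{prop:evaluate siegel unit} (orders of $g_u$ at cusps of the curve) does not compute it. In particular the residue is not of the form $\pm\ord_c(f_1)\ord_c(f_2)$: one must track where on the polygon the divisor of each unit sits, via a two-step boundary map (tame symbol along each component of the $N$-gon, then a divisor/degree map on the components). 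Moreover your ``key ingredient'' is false on degree and parity grounds: the convolution of two $B_2$'s (discrete or continuous) produces a degree-$4$, \emph{even} Bernoulli expression ($B_3(1-x)=-B_3(x)$ while $B_4(1-x)=B_4(x)$), which cannot equal $\alpha_N B_3+c_N$ and would in fact be annihilated by the odd symmetrisation $\Pi_\varepsilon$, giving $0$ rather than $\pm(N^3/3)\lambda^1$. The cubic $B_3$ in the true computation arises from pairing the quadratic order-of-vanishing data ($B_2$) against \emph{linear} positional data along the polygon, via the convolution $\int_0^1 B_2(\{x+t\})B_1(\{t\})\,dt=\tfrac{2!\,1!}{3!}B_3(\{x\})$ up to sign --- this is also where the factor $3$ in $N^3/3$ comes from. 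So the derivation of both the shape and the constant collapses.

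Parts (1) and (2) are also not yet proofs. For (1), the observation $\lambda^1(\phi_u)(e)=B_3(\{\tilde u_2/N\})$ plus ``non-constancy of $B_3$'' does not by itself show that the $\GL_2(\Z/N\Z)$-translates span $V_N^-$; surjectivity of the horospherical map in odd weight is a genuine theorem (\cite[7.5]{Schappacher-Scholl}) and needs an actual argument (note that in the even case $k=0$ the analogous map is \emph{not} surjective, only onto degree-zero divisors, so parity must enter the proof in an essential way). For (2), your plan hinges on the unproved claim that $\ker\lambda^1$ is spanned by distribution relations; I see no justification for this description of the kernel, and Schappacher--Scholl obtain the factorisation by a different route (their Theorem 7.4), not by exhibiting generators of $\ker\lambda^1$ and killing them one by one. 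If you want a self-contained proof you would essentially have to redo \S\S 3--7 of \cite{Schappacher-Scholl} with the corrected constants indicated in the paper's remark; the present sketch does not substitute for that.
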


\begin{rem}
Part (1) is proved in \cite[7.5]{Schappacher-Scholl}, and part (2) follows from \cite[Theorem 7.4]{Schappacher-Scholl}. Note that there is a typo in \cite[2.6]{Schappacher-Scholl}: the formula at the top of p. 310 should read $\tilde{\mathbf{B}}_k(\zeta) := N^{k-1} \hat{\mathbf{B}}_{k,N}(\zeta)$. Consequently, the constant in \cite[Proposition 3.2]{Schappacher-Scholl} should be $N/3$ instead of $1/(3N)$, the constant in \cite[Corollary 3.3]{Schappacher-Scholl} should be $N^3/3$ instead of $N/3$, and the constant $C^n_{P,N}$ at the end of \cite[\S 4]{Schappacher-Scholl} should be $C^n_{P,N} = \pm N^{2n+1} (n+1)/(n+2)!$.
\end{rem}

By Theorem \ref{thm E1}(2), there is a commutative diagram
\begin{equation*}
\begin{tikzcd}
\Q[(\Z/N\Z)^2]^0 \arrow{rr}{\mathcal{E}^1_\mathcal{M}} \arrow{dr}{\lambda^1} & & H^2_\mathcal{M}(E(N),\Q(2))^-\\
& V_N^- \arrow{ur}{\overline{\mathcal{E}}^1_\mathcal{M}}
\end{tikzcd}
\end{equation*}
and by Theorem \ref{thm E1}(3), the map $E^1_\mathcal{M} := \pm (3/N^3) \cdot \overline{\mathcal{E}}^1_\mathcal{M}$ is a right inverse of $\Res^1_\mathcal{M}$. Using Definition \ref{def Eis}, we get the following expression of $\mathcal{E}^1_\mathcal{M}$ in terms of the Eisenstein symbol.

\begin{cor}\label{cor Eis1}
We have
\begin{equation*}
\mathcal{E}^1_\mathcal{M}([u]-[0]) = \pm \frac{N^3}{3} \cdot \Eis^1(u) \qquad (u \in (\Z/N\Z)^2).
\end{equation*}
\end{cor}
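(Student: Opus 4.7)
The proof should be a straightforward unwinding of the definitions once Theorem \ref{thm E1} is in hand; no new geometric or cohomological input is needed. Here is the plan.

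First, by Theorem \ref{thm E1}(2), the map $\mathcal{E}^1_\mathcal{M}$ factors as $\mathcal{E}^1_\mathcal{M} = \overline{\mathcal{E}}^1_\mathcal{M} \circ \lambda^1$. The map $E^1_\mathcal{M}$ was defined just after the theorem by $E^1_\mathcal{M} = \pm (3/N^3) \cdot \overline{\mathcal{E}}^1_\mathcal{M}$, so equivalently $\overline{\mathcal{E}}^1_\mathcal{M} = \pm (N^3/3) \cdot E^1_\mathcal{M}$. Combining these, I would write
\begin{equation*}
\mathcal{E}^1_\mathcal{M}([u]-[0]) = \overline{\mathcal{E}}^1_\mathcal{M}\bigl(\lambda^1([u]) - \lambda^1([0])\bigr) = \pm \frac{N^3}{3} \cdot E^1_\mathcal{M}\bigl(\lambda^1([u]) - \lambda^1([0])\bigr).
\end{equation*}

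The remaining point is to show $\lambda^1([0]) = 0$, so that the right-hand side equals $\pm (N^3/3) \cdot E^1_\mathcal{M}(\lambda^1(\phi_u)) = \pm (N^3/3) \cdot \Eis^1(u)$ by Definition \ref{def Eis}. For any $g \in \GL_2(\Z/N\Z)$, the only pair $(x_1,x_2)$ contributing to
\begin{equation*}
\lambda^1(\phi_0)(g) = \sum_{x_1,x_2 \in \Z/N\Z} \phi_0\!\left(g^{-1}\begin{pmatrix}x_1\\x_2\end{pmatrix}\right) B_3\!\left(\left\{\tfrac{x_2}{N}\right\}\right)
\end{equation*}
is $(x_1,x_2)=(0,0)$, and the corresponding Bernoulli value is $B_3(0)=0$. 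Hence $\lambda^1([0])=0$, which finishes the argument.

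There is no real obstacle here: the substantive work is already absorbed in Theorem \ref{thm E1}, and the only non-trivial verification is the vanishing $B_3(0)=0$, which is immediate from $B_3(x) = x^3 - \tfrac{3}{2}x^2 + \tfrac{1}{2}x$. The corollary is essentially a bookkeeping translation between the normalisation $\mathcal{E}^1_\mathcal{M}$ (which naturally takes divisors as input) and the normalisation $\Eis^1$ (which takes elements of $V_N^-$ as input via $\lambda^1$), together with the observation that $\phi_0$ lies in the kernel of $\lambda^1$.
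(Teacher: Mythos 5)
Your argument is correct and coincides with the paper's (implicit) reasoning: the corollary is stated there as an immediate consequence of Theorem \ref{thm E1} and Definition \ref{def Eis}, which is exactly the bookkeeping you carry out. The one detail the paper leaves unstated, namely that $\lambda^1([0])=0$ because $B_3(0)=0$, is verified correctly in your write-up.
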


\subsection{Expressing $\{X,Y\}$ in terms of Eisenstein symbols}

\begin{lem}\label{lem XY}
The Milnor symbol $\{X,Y\}$ defines an element of $H^2_\mathcal{M}(E_1(8),\Q(2))^-$.
\end{lem}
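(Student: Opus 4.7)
The plan is to establish two properties of $\{X,Y\}$: anti-invariance under the elliptic involution, and vanishing (modulo torsion) of the tame symbol along every codimension-one subvariety of $E_1(8)$. The first places the symbol in the $-$-eigenspace, while the second lifts $\{X,Y\}$ from $K_2$ of the function field to the motivic cohomology $H^2_\mathcal{M}(E_1(8),\Q(2))$ via the Gersten-type resolution.

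Anti-invariance is immediate: since the elliptic involution acts by $\iota:(X,Y,Z)\mapsto(Y,X,Z)$, we have $\iota^*\{X,Y\}=\{Y,X\}=-\{X,Y\}$ by skew-symmetry of the Milnor symbol.

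For the tame symbols, I would first read off the divisors of $X$ and $Y$ from the Lecacheux parametrisation. The ``$\tau$-only'' factors $(\wp_\tau(1/8)-\wp_\tau(1/4))^2$ and $\wp_\tau(1/8)-\wp_\tau(1/2)$ are non-vanishing on $\h$ (the second because $Z$ is a modular unit on $Y_1(8)$, and the first because $1/8$ is never congruent to $\pm 1/4$ modulo $\Z+\tau\Z$ for $\tau\in\h$), so there are no vertical divisor contributions. A direct analysis of the remaining $\wp$-factors yields
\begin{align*}
\operatorname{div}(X) &= [4A]+[-2A]-[0]-[2A],\\
\operatorname{div}(Y) &= [4A]+[2A]-[0]-[-2A],
\end{align*}
with the formula for $Y$ following from $Y(\tau,z)=X(\tau,-z)$. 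Then at each of the four sections $[0]$, $[2A]$, $[4A]$, $[-2A]$ I would compute the tame symbol of $\{X,Y\}$, using the identity $XY=-U$ from \S\ref{section:elliptic modular surface} together with $U_{2A}=1$ and $U_{4A}=0$. At $[2A]$, where the orders of $X$ and $Y$ are $(-1,+1)$, the tame symbol is $(-1)^{-1}(XY)|_{[2A]}=(-1)(-1)=1$; the computation at $[-2A]$ is analogous. At $[0]$ and $[4A]$ the orders coincide, so the tame symbol reduces to a ratio $\pm(X/Y)^{\pm 1}|_D$; the antisymmetry $Y(\tau,z)=X(\tau,-z)$, combined with the Laurent expansion of $X$ in $z$ around the relevant torsion value, forces this ratio to equal $-1$, giving tame symbol $1$ in each case. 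Since every tame symbol equals $\pm 1$, it is torsion and therefore trivial in $\mathbf{C}(D)^\times\otimes\Q$, so $\{X,Y\}$ lifts to a class in $H^2_\mathcal{M}(E_1(8),\Q(2))^-$.

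The main technical point is the computation at $[0]$ and $[4A]$: here $X$ and $Y$ simultaneously blow up or vanish, and one must compare leading Laurent coefficients rather than simply evaluate. The symmetry $Y(\tau,z)=X(\tau,-z)$ makes this manageable, but it is the only step that requires more than a formal manipulation of the tame symbol formula.
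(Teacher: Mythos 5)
Your proof is correct and follows essentially the same route as the paper: anti-invariance via $\iota^*\{X,Y\}=\{Y,X\}=-\{X,Y\}$, plus triviality of the tame symbols along the torsion sections in the support of $\dv(X)$ and $\dv(Y)$ (your divisors agree with Lemma \ref{div piY}, since $6A=-2A$) to extend the symbol to $H^2_\mathcal{M}(E_1(8),\Q(2))$. The only difference is cosmetic: the paper abbreviates the tame-symbol check by using $t_{2A}^*X=Y$, $t_{2A}^*Y=1/X$ (so $\{X,Y\}$ is $t_{2A}$-invariant and fewer sections need checking), whereas you verify all four sections directly via $XY=-U$ and the symmetry $Y(\tau,z)=X(\tau,-z)$, which correctly yields ratio $-1$ at the $2$-torsion sections $0$ and $4A$.
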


\begin{proof}
Let $t_{2A}$ denote the translation by $2A$ on $E_1(8)$. By \cite[Proof of Corollary 7.1]{Bertin-Lecacheux}, we have $t_{2A}^*(X)=Y$ and $t_{2A}^*(Y)=1/X$, so that $t_{2A}^* \{X,Y\} = \{Y,1/X\} = \{X,Y\}$. We check that the tame symbols of $\{X,Y\}$ at the sections $0$ and $A$ are equal to 1. It follows that $\{X,Y\}$ extends to an element of $H^2_{\mathcal{M}}(E_1(8),\Q(2))$. Since $\iota^* \{X,Y\} = \{Y,X\} = -\{X,Y\}$, we have $\{X,Y\} \in H^2_\mathcal{M}(E_1(8),\Q(2))^-$.
\end{proof}

We denote by $\tilde{\pi}$ the canonical projection map $E(8) \to E_1(8)$.

\begin{lem}\label{div piY}
We have
\begin{align*}
\dv(\tilde{\pi}^* X) & =  - (0,0) - (0,2) + (0,4) + (0,6)\\
\dv(\tilde{\pi}^* Y) & = - (0,0) + (0,2) + (0,4) - (0,6).
\end{align*}
\end{lem}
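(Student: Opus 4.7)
The plan is to extract the divisor from the explicit Lecacheux parametrisation of $X$ and $Y$ given at the end of Section \ref{section:elliptic modular surface}, and then translate the answer from fibre coordinates into the $(\Z/8\Z)^2$ labelling of $8$-torsion sections used in Section \ref{subsection:Universal elliptic curves}.

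First I would separate the $\tau$-only prefactors from the $z$-dependent factors in
\[
X = \frac{(\wp_\tau(1/8)-\wp_\tau(1/4))^2 (\wp_\tau(z)-\wp_\tau(1/2))}{(\wp_\tau(1/8)-\wp_\tau(1/2))(\wp_\tau(z+1/4)-\wp_\tau(1/4))(\wp_\tau(z)-\wp_\tau(1/4))}
\]
and the analogous formula for $Y$. The $\tau$-dependent prefactors $\wp_\tau(1/8)-\wp_\tau(1/4)$, $\wp_\tau(1/8)-\wp_\tau(1/2)$, $\wp_\tau(1/4)-\wp_\tau(1/2)$ are Weierstrass units, hence (as used already in the proof of Proposition \ref{proZ1}) their divisors on $X(8)$ are supported on the cusps. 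They therefore lie in $\mathcal{O}(Y(8))^\times$ and, when viewed on $E(8)$ through $p\colon E(8)\to Y(8)$, contribute no horizontal divisor and no vertical divisor over the open modular curve. So $\dv(\tilde\pi^* X)$ is determined by the $z$-dependent factors alone.

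Next I would compute the fibrewise divisor on $\C/(\Z+\tau\Z)$ of each building block, using the standard fact that $\wp_\tau(z)-\wp_\tau(w)$ has divisor $[w]+[-w]-2[0]$, which degenerates to $2[w]-2[0]$ when $w$ is $2$-torsion:
\begin{align*}
\dv_{z}\bigl(\wp_\tau(z)-\wp_\tau(1/2)\bigr) &= 2[1/2]-2[0],\\
\dv_{z}\bigl(\wp_\tau(z+1/4)-\wp_\tau(1/4)\bigr) &= [0]+[1/2]-2[3/4],\\
\dv_{z}\bigl(\wp_\tau(z)-\wp_\tau(1/4)\bigr) &= [1/4]+[3/4]-2[0].
\end{align*}
Taking numerator minus denominator in the formula for $X$, these combine to
\[
\dv_{z}(X) = -[0]-[1/4]+[1/2]+[3/4].
\]

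Finally I would translate from fibre coordinates to the labelling of $E(8)[8]$ by $(\Z/8\Z)^2$. The canonical section $A$ corresponds to $z=1/8$, which under the identification $(a,b)\mapsto (a\tau+b)/8$ of $(\Z/8\Z)^2$ with the $8$-torsion in $\C/(\Z+\tau\Z)$ is the section $(0,1)$; hence $2A,4A,6A$ are $(0,2),(0,4),(0,6)$, giving the first formula. For $Y$, I would use that the elliptic involution on $E_1(8)$ is $(X,Y,Z)\mapsto(Y,X,Z)$ and that in the Lecacheux parametrisation this corresponds to $z\mapsto -z$, so $Y(\tau,z)=X(\tau,-z)$. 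Applying $z\mapsto-z$ to $\dv_z(X)$ swaps $[1/4]$ and $[3/4]$ and fixes $[0],[1/2]$, yielding $\dv_z(Y)=-[0]+[1/4]+[1/2]-[3/4]$, which translates to $-(0,0)+(0,2)+(0,4)-(0,6)$.

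The only mild obstacle is bookkeeping: verifying that the Weierstrass prefactors really have no divisor on $E(8)$ (which is why the answer is purely horizontal and consists only of the $8$-torsion sections listed), and pinning down the identification of $z=k/8$ with $(0,k)\in(\Z/8\Z)^2$ under the conventions fixed in Section \ref{subsection:Universal elliptic curves}. Both are straightforward consistency checks; the computation of the three $\wp$-divisors is routine.
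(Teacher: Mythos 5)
Your proof is correct, but it takes a different route from the paper's. The paper simply quotes the horizontal divisors $\dv(X)=-(0)-(2A)+(4A)+(6A)$ and $\dv(Y)=-(0)+(2A)+(4A)-(6A)$ on $E_1(8)$ from Bertin--Lecacheux \cite[\S 7.4]{Bertin-Lecacheux}, and then transports them to $E(8)$ by the moduli-theoretic remark that $E(8)\cong E_1(8)\times_{Y_1(8)}Y(8)$ and that the base change of the section $kA$ is the section $(0,k)$ (because the degeneracy map sends $(E,e_1,e_2)$ to $(E,e_2)$, so $e_2$ is the pullback of $A$). You instead recompute the divisors from scratch out of the Lecacheux $\wp$-parametrisation of \S\ref{section:elliptic modular surface}: your fibrewise divisors of the three factors are right, they combine to $-[0]-[1/4]+[1/2]+[3/4]$ for $X$, and $Y(\tau,z)=X(\tau,-z)$ gives the second line; your observation that the $\tau$-only prefactors are nowhere-vanishing holomorphic on $\h$ (so contribute neither horizontal nor vertical components over the open curve) is exactly what is needed to see the divisor is purely horizontal. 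This makes the lemma self-contained, at the price of an extra bookkeeping point you flag yourself: identifying the section $z=k/8$ with the label $(0,k)$. The clean way to settle that is not the complex uniformisation but precisely the base-change argument the paper uses ($kA\mapsto k e_2=(0,k)$); note the convention genuinely matters here, since a reversed labelling would swap $(0,2)$ and $(0,6)$ and hence interchange the two displayed divisors (though downstream, in Proposition \ref{pro piXY}, this would only affect the sign already absorbed in the $\pm$). As a further cross-check, the divisors can also be read off even more quickly from the Weierstrass model \eqref{eqn:weierstrass form E18} using $X=-U(U-1)/V$, $Y=V/(U-1)$, $\dv(U)=2(4A)-2(O)$, $\dv(U-1)=(2A)+(6A)-2(O)$, $\dv(V)=2(2A)+(4A)-3(O)$, which reproduces the Bertin--Lecacheux formulas without any $\wp$-function computation.
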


\begin{proof}
By \cite[\S 7.4]{Bertin-Lecacheux}, the horizontal divisors of $X$ and $Y$ on $E_1(8)$ are given by
\begin{align*}
\dv(X) & = - (0) - (2A) + (4A) + (6A) \\
\dv(Y) & = - (0) + (2A) + (4A) - (6A).
\end{align*}
Note that the universal elliptic curve $E(8)$ over $Y(8)$ is canonically isomorphic to the base change $E_1(8) \times_{Y_1(8)} Y(8)$. The base change of the section $kA : Y_1(8) \to E_1(8)$ is the section $(0,k) : Y(8) \to E(8)$, and the pull-back of the divisor $[kA]$ by $\tilde{\pi}$ is simply the divisor $[(0,k)]$. The lemma follows.
\end{proof}

\begin{prop}\label{pro piXY}
We have $\tilde{\pi}^*\{ X, Y\} = \pm (64/3) \cdot \Eis^1(0,2)$.
\end{prop}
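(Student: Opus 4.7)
The plan is to apply the commutative diagram \eqref{diag alpha eps} to express $\tilde{\pi}^*\{X,Y\}$ through the Eisenstein map $\mathcal{E}^1_{\mathcal{M}}$, thereby reducing the assertion to a finite convolution computation in the group algebra $\Q[(\Z/8\Z)^2]$ followed by an application of Corollary \ref{cor Eis1}.

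Pulling back Lemma \ref{lem XY} along $\tilde{\pi}$, the class $\tilde{\pi}^*\{X,Y\}$ lies in $H^2_{\mathcal{M}}(E(8),\Q(2))^-$, which by Theorem \ref{thm H2E}(1) coincides with the $\varepsilon$-eigenspace. By Theorem \ref{thm H2E}(2) it then suffices to realize $\tilde{\pi}^*\{X,Y\}|_{U_8}$ as $\Pi_\varepsilon \circ \alpha$ of some tensor in $\mathcal{O}(U_8)^\times \otimes \mathcal{O}(U_8)^\times$. I would take $g_0 = \iota^*\tilde{\pi}^*X$ and $g_1 = \tilde{\pi}^*Y$, so that by definition $\alpha(g_0 \otimes g_1) = \{\iota^*g_0, g_1\} = \tilde{\pi}^*\{X,Y\}$. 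Since the elliptic involution acts as $[-1]$ on $N$-torsion sections, Lemma \ref{div piY} gives $\dv(g_0) = \iota^*\dv(\tilde{\pi}^*X) = \dv(\tilde{\pi}^*Y) =: D$.

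Diagram \eqref{diag alpha eps} then yields
\begin{equation*}
\tilde{\pi}^*\{X,Y\} = \mathcal{E}^1_{\mathcal{M}}\bigl(\tfrac{1}{64}\, D * D\bigr),
\end{equation*}
where $*$ denotes the convolution in $\Q[(\Z/8\Z)^2]$. Since $D$ is supported on the subgroup $\{0\} \times \Z/8\Z$, the convolution reduces to one in $\Q[\Z/8\Z]$, and a direct tabulation of the four nonzero terms of $D$ yields $D * D = 4\bigl([(0,6)] - [(0,2)]\bigr)$. Writing $[(0,6)] - [(0,2)] = \bigl([(0,6)] - [0]\bigr) - \bigl([(0,2)] - [0]\bigr)$ and invoking Corollary \ref{cor Eis1} gives
\begin{equation*}
\tilde{\pi}^*\{X,Y\} = \pm \tfrac{1}{16}\cdot \tfrac{8^3}{3}\bigl(\Eis^1(0,6) - \Eis^1(0,2)\bigr) = \pm \tfrac{64}{3}\,\Eis^1(0,2),
\end{equation*}
where the last equality uses $\Eis^1(0,6) = \Eis^1(0,-2) = -\Eis^1(0,2)$.

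The main obstacle is the bookkeeping in the second paragraph: one must confirm that $\tilde{\pi}^*\{X,Y\}$ genuinely lifts from $U_8$ to an $\varepsilon$-eigenclass on $E(8)$ so that Theorem \ref{thm H2E} applies, and then check that the chosen $g_0$, $g_1$ produce the correct Milnor symbol under $\alpha$ with divisors matching $D$. Once this identification is in place, the remaining convolution arithmetic and appeal to Corollary \ref{cor Eis1} are mechanical.
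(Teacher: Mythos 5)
Your proposal is correct and follows essentially the same route as the paper: eigenspace membership via Lemma \ref{lem XY} and Theorem \ref{thm H2E}(1), application of $\Pi_\varepsilon \circ \alpha$ and the diagram (\ref{diag alpha eps}), the convolution computation $\mu(\dv(\tilde{\pi}^*Y)\otimes\dv(\tilde{\pi}^*Y)) = -4(0,2)+4(0,6)$, and Corollary \ref{cor Eis1} together with $\Eis^1(0,6)=-\Eis^1(0,2)$. The only cosmetic difference is that the paper uses $\iota^*X=Y$ directly (the elliptic involution swaps $X$ and $Y$) rather than computing $\dv(\iota^*\tilde{\pi}^*X)$ via the action of $[-1]$ on torsion sections, which amounts to the same thing.
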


\begin{proof}
By Theorem \ref{thm H2E}(1) and Lemma \ref{lem XY}, the element $\tilde{\pi}^* \{X,Y\}$ lies in the $\varepsilon$-eigenspace. Thus
\begin{equation*}
\tilde{\pi}^* \{X,Y\} = \Pi_\varepsilon(\tilde{\pi}^* \{X,Y\}) = \Pi_\varepsilon \circ \alpha ((\iota^* \tilde{\pi}^* X) \otimes (\tilde{\pi}^* Y)) = \Pi_\varepsilon \circ \alpha (\tilde{\pi}^* Y \otimes \tilde{\pi}^* Y).
\end{equation*}
The divisor of $\tilde{\pi}^* Y$ is given by Lemma \ref{div piY}, and an explicit computation shows that
\begin{equation*}
\mu(\dv(\tilde{\pi}^* Y) \otimes \dv(\tilde{\pi}^* Y)) = - 4 (0,2) + 4 (0,6).
\end{equation*}
Using diagram (\ref{diag alpha eps}), we see that $\tilde{\pi}^* \{X,Y\} = (1/64) \cdot\mathcal{E}^1_\mathcal{M}(-4(0,2)+4(0,6))$. Using Corollary \ref{cor Eis1}, we get $\tilde{\pi}^* \{X,Y\} = \pm (32/3) \cdot (\Eis^1(0,2)-\Eis^1(0,6))$. Since $\Eis^1(0,6)=-\Eis^1(0,2)$, the proposition follows.
\end{proof}

\section{Deligne--Beilinson cohomology}\label{section:Deligne cohomology}

In this section, we briefly recall some facts on Deligne--Beilinson cohomology following Deninger \cite[\S 1]{Deninger1997}. This provides a convenient setting for the various differential forms that appear in the next sections.

Let $X$ be a smooth variety over $\C$. For any $i \geq 1$, the Deligne--Beilinson cohomology group $H^i_{\mathcal{D}}(X,\R(i))$ is defined by
\[
H^i_{\mathcal{D}}(X, \R(i)) = \{
\phi\in\mathcal{A}^{i-1}(X,\R(i-1))|d\phi = \pi_{i-1}(\omega),\omega\in F^i(X)\}
/d\mathcal{A}^{i-2}(X,\R(i-1)),
\]
where $\mathcal{A}^i(X,\R(j))$ are the smooth $i$-forms on $X$ with values in $\R(j) = (2\pi i)^j\R$, $F^i(X)$ is the space of holomorphic $i$-forms on $X$ with at most logarithmic singularities at infinity, and $\pi_n(\alpha)=\frac{1}{2}(\alpha+(-1)^n\overline{\alpha})$.

Note that the map $\phi \mapsto \omega$ induces a well-defined linear map $H^i_{\mathcal{D}}(X, \R(i)) \to F^i(X)$.

As an example, for any invertible function $f$ in $\mathcal{O}(X)^\times$, the function $\log\abs{f}$ defines an element of $H^1_{\mathcal{D}}(X,\R(1))$, and the associated holomorphic $1$-form is $\operatorname{dlog} f$.

There is a cup product on the Deligne--Beilinson cohomology groups
\[
\cup: H^i_{\mathcal{D}}(X,\R(i))\times H^j_{\mathcal{D}}(X,\R(j))\rightarrow H^{i+j}_{\mathcal{D}}(X,\R(i+j))
\]
that is defined as follows. For $\alpha=i,j$  consider a class $[\phi_\alpha]\in  H^\alpha_{\mathcal{D}}(X,\R(\alpha))$ represented by $\phi_\alpha\in\mathcal{A}^{\alpha-1}(X,\R(\alpha-1))$ with $d(\phi_\alpha)=\pi_{\alpha-1}(\omega_\alpha)$ for $\omega_\alpha\in F^\alpha(X)$. Then the cup product
\[
\phi_i\cup\phi_j= \phi_i\wedge\pi_j\omega_j+(-1)^i\pi_i\omega_i\wedge\phi_j
\] 
is in $\mathcal{A}^{i+j-1}(X,\R(i+j-1))$ and satisfies
\[
d(\phi_i\cup\phi_j) = \pi_{i+j-1}(\omega_i\wedge\omega_j).
\]
The cup product of the classes $[\phi_i]$ and $[\phi_j]$ is given by
\[
[\phi_i]\cup [\phi_j]=[\phi_i\cup\phi_j]\in H^{i+j}_{\mathcal{D}}(X,\R(i+j)).
\]
Let us now turn to the Beilinson regulator. Beilinson defined a $\Q$-linear map
\[
\operatorname{reg} : H^i_{\mathcal{M}}(X,\Q(i)) \to H^i_{\mathcal{D}}(X,\R(i))
\]
where $H^i_\mathcal{M}(\cdot,\Q(i))$ denotes motivic cohomology. In the simplest case $i=1$, we have $H^1_{\mathcal{M}}(X,\Q(1)) \cong \mathcal{O}(X)^\times \otimes \Q$, and the image of an invertible function $f \in \mathcal{O}(X)^\times$ under the Beilinson regulator is simply the class of $\log |f|$. Since the Beilinson regulator map is compatible with taking cup products, this determines the regulator of an arbitrary Milnor symbol $\{f_1,\ldots,f_i\} = f_1 \cup \ldots \cup f_i$ in $H^i_\mathcal{M}(X,\Q(i))$.

\section{Integrating Eisenstein symbols over Shokurov cycles}
\label{section:Integrating Eisenstein symbols}
In this section we study integrals of Eisenstein symbols over Shokurov cycles. Since Eisenstein symbols have logarithmic singularities at the cusps, the integral over a Shokurov cycle does not always converge. One way to circumvent this problem is to regularise the integrals as in \cite{brunault:regRZ}. Here we give a criterion for when the integrals are absolutely convergent. This criterion will always be satisfied in our applications.

We first discuss realisations of Eisenstein symbols. Consider the Eisenstein symbol $\Eis^n(u)$, where $n \geq 0$ and $u \in (\Z/N\Z)^2$. In the case $n \geq 1$, Deninger defined in \cite[\S 3]{deninger:extensions} a canonical representative $\Eis^n_{\mathcal{D}}(u)$ of the Beilinson regulator of $\Eis^n(u)$. It is a $\R(n)$-valued $n$-form on $E(N)^n(\C)$ satisfying $d\Eis^n_{\mathcal{D}}(u) = \pi_n(\Eis^n_{\mathrm{hol}}(u))$ where $\Eis^n_{\mathrm{hol}}(u)$ is a holomorphic Eisenstein series of weight $n+2$ for the group $\Gamma(N)$. Note that $\Eis^n_{\mathcal{D}}(u)$ is a real analytic Eisenstein series \cite[Prop. 5.4]{brunault:regRZ}.

We extend the definition of $\Eis^n_{\mathcal{D}}(u)$ and $\Eis^n_{\mathrm{hol}}(u)$ to the case $n=0$ by setting $\Eis^0_{\mathcal{D}}(u) = (2/N) \log |g_u|$ and $\Eis^0_{\mathrm{hol}}(u) = (2/N) \operatorname{dlog}(g_u)$ for every $u \neq (0,0)$. We can check that these definitions are consistent with \cite{brunault:regRZ} by applying the logarithm to the $q$-product expression for Siegel units. In particular, this argument shows that the Fourier expansions of $\Eis^0_{\mathcal{D}}(u)$ and $\Eis^0_{\mathrm{hol}}(u)$ are indeed given by the formulas in \cite[\S 8]{brunault:regRZ}. (Note that in the case $n=0$, the series appearing in \cite[Prop. 5.4]{brunault:regRZ} does not converge absolutely, and making sense of it requires Hecke summation.)

We now give the transformation property of Eisenstein symbols with respect to the group $\GL_2(\Z/N\Z)$.

\begin{lem}\label{g EisnD}
For any $g \in \GL_2(\Z/N\Z)$, we have
\begin{align*}
g^* \Eis^n_{\mathcal{D}}(u) & = \Eis^n_{\mathcal{D}}(ug),\\
g^* \Eis^n_{\mathrm{hol}}(u) & = \Eis^n_{\mathrm{hol}}(ug).
\end{align*}
\end{lem}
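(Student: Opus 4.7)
The plan is to verify both identities at the level of forms, treating the cases $n = 0$ and $n \geq 1$ separately.

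For $n = 0$, by definition $\Eis^0_{\mathcal{D}}(u) = (2/N) \log|g_u|$ and $\Eis^0_{\mathrm{hol}}(u) = (2/N) \operatorname{dlog}(g_u)$. As recorded in the paragraph preceding Proposition \ref{proZ1}, the Siegel units satisfy $g^* g_u = g_{ug}$ in $\mathcal{O}(Y(N))^\times \otimes \Q$ for every $g \in \GL_2(\Z/N\Z)$: the roots of unity appearing integrally in Proposition \ref{prop:siegel-transformation} have finite order and vanish after tensoring with $\Q$. Applying $\log|\cdot|$ and $\operatorname{dlog}$ to this identity, both of which factor through $\mathcal{O}(Y(N))^\times \otimes \Q$ because they kill torsion, yields the claim for $n = 0$.

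For $n \geq 1$, I would first establish equivariance in motivic cohomology and then lift it to the chosen representative forms. The horospherical map $\lambda^n$ is $\GL_2(\Z/N\Z)$-equivariant by a direct change of summation variable in its defining formula: $\lambda^n(\phi_{ug})$ is the right translate of $\lambda^n(\phi_u)$ by $g$ in $V_N^{(-1)^n}$. Beilinson's Eisenstein section $E^n_{\mathcal{M}}$ is equivariant as well, being a canonical section of the $\GL_2(\Z/N\Z)$-equivariant residue map $\Res^n_{\mathcal{M}}$. Combining these, $g^* \Eis^n(u) = \Eis^n(ug)$ in $H^{n+1}_{\mathcal{M}}(E(N)^n, \Q(n+1))$, and by functoriality of the Beilinson regulator the identity descends to Deligne--Beilinson cohomology.

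To upgrade from cohomology classes to an equality of the specific representative forms, I would appeal to the explicit description in \cite[\S 3]{deninger:extensions}: the forms $\Eis^n_{\mathcal{D}}(u)$ and $\Eis^n_{\mathrm{hol}}(u)$ are real-analytic and holomorphic Eisenstein--Kronecker series on $E(N)^n(\C)$ whose defining sums are indexed by lattice vectors in the congruence class attached to $u$. A direct substitution, using the rule $g \cdot (\tau, z, h) = (\tau, z, hg^T)$ for the action on $E(N)(\C)$, shows that pulling back under $g$ sends the sum indexed by $u$ to the sum indexed by $ug$, yielding both identities at the level of forms; this can equivalently be checked on the Fourier expansions in \cite[\S 8]{brunault:regRZ}.

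The main obstacle is conventional bookkeeping: the paper combines left $\GL_2(\Z/N\Z)$-actions with transpositions (most visibly in $g \cdot (\tau, z, h) = (\tau, z, hg^T)$), so one must consistently track whether the pulled-back sum is indexed by $ug$, $gu$ or $ug^T$. Once the conventions are pinned down, the argument is essentially formal, combining the equivariance of $\lambda^n$, of Beilinson's $E^n_{\mathcal{M}}$, and of Deninger's canonical representatives.
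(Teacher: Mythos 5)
Your proof is correct and follows essentially the same route as the paper: the case $n=0$ is the transformation law of Siegel units in $\mathcal{O}(Y(N))^\times \otimes \Q$, and the case $n \geq 1$ is read off directly from Deninger's explicit construction of $\Eis^n_{\mathcal{D}}(u)$ and $\Eis^n_{\mathrm{hol}}(u)$ as Eisenstein--Kronecker series, which is exactly what the paper invokes. The detour through equivariance of $\lambda^n$ and $E^n_{\mathcal{M}}$ in motivic cohomology is harmless but redundant, since (as you yourself note) it only yields equality of cohomology classes, and you end up verifying the identity of the chosen representative forms by direct substitution anyway.
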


\begin{proof}
In the case $n \geq 1$, this is a direct consequence of Deninger's definition, while in the case $n=0$ this follows from the transformation property of Siegel units given in \S \ref{siegel units}.
\end{proof}

\begin{dfn}
Let $p:E(N) \to Y(N)$ be the projection map. Let $u, v\in (\Z/N\Z)^2$, with $u \neq (0,0)$. We define the Deninger--Scholl element $\Eis^{0,1}(u,v) := (p^* \Eis^0(u)) \cup \Eis^1(v)\in H^3_{\mathcal{M}}(E(N),\Q(3))$. 
\end{dfn}
In \cite[\S 6]{brunault:regRZ}, we defined a canonical differential $2$-form $\Eis^{0,1}_{\mathcal{D}}(u,v)$ on $E(N)(\C)$ representing the regulator of $\Eis^{0,1}(u,v)$. We split $\Eis^{0,1}_{\mathcal{D}}(u,v)$ into a sum $\eta^1_{u,v} + \eta^2_{u,v}$, where
\begin{align}
\label{def eta1} \eta^1_{u,v} & = p^* \Eis^0_\mathcal{D}(u) \cdot \pi_2(\Eis^1_{\mathrm{hol}}(v)), \\
\label{def eta2} \eta^2_{u,v} & = -p^* \pi_1(\Eis^0_{\mathrm{hol}}(u)) \wedge \Eis^1_\mathcal{D}(v).
\end{align}
It is convenient to extend the definition of Eisenstein symbols linearly as follows.
 For $\phi \in \C[(\Z/N\Z)^2 \times (\Z/N\Z)^2]$ such that $\phi((0,0),v) = 0$ for all $v\in (\Z/N\Z)^2$, we define
\begin{equation*}
\Eis^{0,1}_{\mathcal{D}}(\phi) = \sum_{u,v \in (\Z/N\Z)^2} \phi(u,v) \Eis^{0,1}_{\mathcal{D}}(u,v).
\end{equation*}
We define $\eta^1_\phi$ and $\eta^2_\phi$ analogously.

The following lemma is an immediate consequence of Lemma \ref{g EisnD}.

\begin{lem}\label{g Eis01D}
For any $g \in \GL_2(\Z/N\Z)$
\begin{align*}
g^* \Eis^{0,1}_{\mathcal{D}}(u,v) & = \Eis^{0,1}_{\mathcal{D}}(ug,vg),\\
g^* \Eis^{0,1}_{\mathrm{hol}}(u,v) & = \Eis^{0,1}_{\mathrm{hol}}(ug,vg).
\end{align*}
\end{lem}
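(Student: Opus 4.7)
The plan is straightforward: unravel the definitions of $\Eis^{0,1}_{\mathcal{D}}(u,v)$ and $\Eis^{0,1}_{\mathrm{hol}}(u,v)$ into their building blocks $\Eis^0_{\mathcal{D}}$, $\Eis^1_{\mathcal{D}}$, $\Eis^0_{\mathrm{hol}}$, and $\Eis^1_{\mathrm{hol}}$, and then apply Lemma \ref{g EisnD} to each factor. For the Deligne realisation, formulas (\ref{def eta1}) and (\ref{def eta2}) express $\Eis^{0,1}_{\mathcal{D}}(u,v)$ as the sum $\eta^1_{u,v} + \eta^2_{u,v}$, each summand being a wedge product of pieces of the form $p^* \Eis^0_{\mathcal{D}}(u)$, $\Eis^1_{\mathcal{D}}(v)$, $p^* \Eis^0_{\mathrm{hol}}(u)$, and $\Eis^1_{\mathrm{hol}}(v)$, composed with the real projector $\pi_n$.

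Two small facts power the argument. First, the left action of $\GL_2(\Z/N\Z)$ on $E(N)(\C)$, given explicitly by $g\cdot(\tau,z,h) = (\tau,z,hg^T)$, preserves the first two coordinates and covers the corresponding action on $Y(N)(\C)$ described in \S\ref{subsection:modular curves}; hence the projection $p : E(N) \to Y(N)$ is $G$-equivariant, so that $g^* \circ p^* = p^* \circ g^*$ on differential forms. Second, $g^*$ is the pullback along a real-analytic automorphism, so it commutes with complex conjugation, with the real projector $\pi_n$, and with wedge products.

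Given these compatibilities, Lemma \ref{g EisnD} applied factor by factor yields $g^* \eta^1_{u,v} = p^* \Eis^0_{\mathcal{D}}(ug) \cdot \pi_2(\Eis^1_{\mathrm{hol}}(vg)) = \eta^1_{ug,vg}$, and the identical argument gives $g^* \eta^2_{u,v} = \eta^2_{ug,vg}$. Summing yields the first identity. The second identity, for the holomorphic form $\Eis^{0,1}_{\mathrm{hol}}(u,v) = p^* \Eis^0_{\mathrm{hol}}(u) \wedge \Eis^1_{\mathrm{hol}}(v)$, follows by the same factorwise computation. I do not anticipate any genuine obstacle; the only point worth double-checking is the $G$-equivariance of $p$, which is transparent from the coordinate formula for the action, together with the observation that $g^*$ respects both $p^*$ and $\pi_n$.
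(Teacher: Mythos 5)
Your proposal is correct and matches the paper, which simply notes that the lemma is an immediate consequence of Lemma \ref{g EisnD}; your argument is the natural expansion of that remark, decomposing $\Eis^{0,1}_{\mathcal{D}}(u,v)$ via (\ref{def eta1})--(\ref{def eta2}), using the $G$-equivariance of $p$ and the compatibility of $g^*$ with $\pi_n$ and wedge products, and applying Lemma \ref{g EisnD} factorwise.
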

Introducing a right $\GL_2(\Z/N\Z)$-action on $\C[(\Z/N\Z)^2 \times (\Z/N\Z)^2]$ by setting $\phi|g(u,v) = \phi(ug^{-1},vg^{-1})$, Lemma \ref{g Eis01D} can be rephrased as $g^* \Eis^{0,1}_{\mathcal{D}}(\phi) = \Eis^{0,1}_{\mathcal{D}}(\phi|g)$.

\begin{prop}\label{prop:abs convergence}
\begin{enumerate}
\item\label{prop: abs convergence item 1} For all integers $m,n\in\Z$ the integral $\int_{(mX+nY)\{0,\infty\}}\eta^2_{u,v}$ converges absolutely.
\item\label{prop: abs convergence item 2} If $\phi \in \C[(\Z/N\Z)^2 \times (\Z/N\Z)^2]$ satisfies
\begin{equation}\label{eqn: abs convergence cond 1}
\sum_{u,v \in (\Z/N\Z)^2} \phi(u,v) B_2(\{u_1/N\})B_3(\{v_1/N\})=0
\end{equation}
and
\begin{equation}\label{eqn: abs convergence cond 2}
\sum_{\substack{u,v \in (\Z/N\Z)^2\\ u_1 = 0}} \phi(u,v) \log|1-\zeta_N^{u_2}|B_3(\{v_1/N\})=0,
\end{equation}
then
$
\int_{X\{0,\infty\}} \eta^1_\phi
$
converges absolutely.
\item\label{prop: abs convergence item 3} If $\phi|\sigma$ satisfies conditions \eqref{eqn: abs convergence cond 1} and \eqref{eqn: abs convergence cond 2}, then
$
\int_{Y\{0,\infty\}} \eta^1_\phi
$
converges absolutely.
\end{enumerate}
\end{prop}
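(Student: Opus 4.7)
The plan is to parametrize each Shokurov cycle explicitly and control the pullback of the integrand as a scalar multiple of $dy \wedge dt$ on $(0,\infty) \times [0,1]$. For $P = mX+nY$, parametrize $P\{0,\infty\}$ by $(y,t) \in (0,\infty) \times [0,1]$ via $\tau = iy$ and $z = t(imy+n)$. Since $t$ ranges over a compact interval, absolute convergence reduces to integrability as $y \to 0$ and $y \to \infty$. The key inputs are the Fourier expansions of $\Eis^n_{\mathcal{D}}(u)$ and $\Eis^n_{\mathrm{hol}}(u)$ at the cusp $\infty$ recorded in \cite[\S 8]{brunault:regRZ}, together with Lemma \ref{g EisnD} to switch between the two cusps via $\sigma$.

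\textbf{Part (1).} The $1$-form $\pi_1(\Eis^0_{\mathrm{hol}}(u)) = \tfrac{1}{2}(\Eis^0_{\mathrm{hol}}(u) - \overline{\Eis^0_{\mathrm{hol}}(u)})$ is pulled back from $Y(N)$ and involves only $d\tau, d\bar\tau$. From the infinite product for $g_u$, its leading Fourier coefficient at $\infty$ is proportional to $B_2(\{\tilde u_1/N\})\, dx$; pulling back to the imaginary axis $\tau = iy$ annihilates $dx$, and the non-constant Fourier terms decay exponentially in $y$. By Lemma \ref{g EisnD} applied with $g = \sigma$ (which preserves the imaginary axis and swaps the cusps), the same exponential decay holds as $y \to 0$. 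Since $\Eis^1_{\mathcal{D}}(v)$ is a real-analytic Eisenstein series of moderate (polynomial) growth on vertical strips, the wedge product $\eta^2_{u,v} = -p^*\pi_1(\Eis^0_{\mathrm{hol}}(u)) \wedge \Eis^1_{\mathcal{D}}(v)$ pulls back to a function that is exponentially small at both cuspidal ends, yielding absolute integrability for any $m,n \in \Z$.

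\textbf{Parts (2) and (3).} Along $X\{0,\infty\}$ one has $z = ity$, hence $d\tau \wedge dz$ pulls back to $-y\, dy \wedge dt$. At $y \to \infty$, use
\[
\log|g_u| = -\pi y\, B_2(\{\tilde u_1/N\}) + \chi(u) + O(e^{-cy}), \qquad \chi(u) := [u_1 = 0]\,\log|1-\zeta_N^{u_2}|,
\]
together with the fact that the leading Fourier coefficient of $\Eis^1_{\mathrm{hol}}(v)$ at $\infty$ is proportional to $B_3(\{\tilde v_1/N\})\, d\tau \wedge dz$. Summing over $(u,v)$, the pullback of $\eta^1_\phi$ has a leading $y^2$-term proportional to $\left(\sum_{u,v} \phi(u,v) B_2(\{\tilde u_1/N\}) B_3(\{\tilde v_1/N\})\right) y^2$, killed by (\ref{eqn: abs convergence cond 1}), and a next-order $y$-term proportional to $\left(\sum_{u:u_1=0,v}\phi(u,v)\log|1-\zeta_N^{u_2}| B_3(\{\tilde v_1/N\})\right) y$, killed by (\ref{eqn: abs convergence cond 2}); the remaining contributions involve non-constant $q$-exponentials and decay exponentially. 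At the other end $y \to 0$ on $X\{0,\infty\}$, apply the weight-$3$ modular transformation $\Eis^1_{\mathrm{hol}}(v)(-1/\tau,z/\tau) = \tau^3\, \Eis^1_{\mathrm{hol}}(v\sigma)(\tau,z)$ and note that the apparent divergent factor $\tau^3$ produces a purely imaginary leading contribution along the imaginary axis, which is annihilated by $\pi_2 = \Re$; the surviving terms are again handled by the same conditions. Part (3) then follows from Part (2) by $\sigma$-duality: Lemma \ref{action G shokurov} gives $\bar\sigma_*(X\{0,\infty\}) = -Y\{0,\infty\}$ and Lemma \ref{g Eis01D} gives $\sigma^*\eta^1_\phi = \eta^1_{\phi|\sigma}$, so
\[
\int_{Y\{0,\infty\}} \eta^1_\phi = -\int_{X\{0,\infty\}} \eta^1_{\phi|\sigma},
\]
and the right-hand side converges absolutely by Part (2) whenever $\phi|\sigma$ satisfies the conditions.

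\textbf{Main obstacle.} The central bookkeeping step is pinning down the exact leading Fourier coefficients of $\Eis^1_{\mathrm{hol}}(v)$ and $\Eis^0_{\mathcal{D}}(u)$ at the cusp $\infty$, with correct normalizations and including the fiber-variable dependence, so that the two divergent powers of $y$ at $y \to \infty$ on $X\{0,\infty\}$ are precisely the combinations of $B_2, B_3$ and $\log|1-\zeta_N^{\cdot}|$ set to zero by (\ref{eqn: abs convergence cond 1}) and (\ref{eqn: abs convergence cond 2}). The subsidiary difficulty is the analysis at the cusp $0$ of $X\{0,\infty\}$, where both base and fiber coordinates degenerate, requiring careful use of modularity and the real-part projection $\pi_2$ to avoid spurious divergences.
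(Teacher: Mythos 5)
Your proposal is correct and follows essentially the same route as the paper's proof: integrate along the fibres (picking up the extra factor of $y$ from $d\tau\wedge dz$), use the Fourier expansions of $\Eis^0_{\mathcal{D}}$ and $\Eis^1_{\mathrm{hol}}$ at $\infty$ so that conditions (\ref{eqn: abs convergence cond 1}) and (\ref{eqn: abs convergence cond 2}) kill exactly the $y^2$- and $y$-growth, handle the cusp $0$ via the weight-$3$ transformation where the would-be divergent constant term is purely imaginary and killed by the real-part structure, note for part (\ref{prop: abs convergence item 1}) that the constant term of $\pi_1(\Eis^0_{\mathrm{hol}}(u))$ is proportional to $dx$ and hence vanishes along the imaginary axis at both ends, and deduce part (\ref{prop: abs convergence item 3}) from part (\ref{prop: abs convergence item 2}) by $\sigma$-equivariance (Lemmas \ref{action G shokurov} and \ref{g Eis01D}), exactly as in the paper. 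One small imprecision: at the cusp $0$ no conditions on $\phi$ are needed (and conditions (\ref{eqn: abs convergence cond 1})--(\ref{eqn: abs convergence cond 2}) would not be the relevant ones there, since the constant terms at $0$ involve $u\sigma,v\sigma$), so the clause ``the surviving terms are again handled by the same conditions'' should be dropped --- after the purely imaginary leading term is removed by $\pi_2$, the remaining terms simply decay exponentially.
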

\begin{proof}
We start with item \eqref{prop: abs convergence item 2}. The strategy is to first compute the integral of $\Eis^{0,1}_\mathcal{D}(u,v)$ along the fibres of the fibration $p_Q : Q \{0,\infty\} \to \{0,\infty\}$. Using \cite[Prop. 8.1]{brunault:regRZ}, we get
\begin{equation}\label{eqn:integral over fibre 1}
\int_{p_X} \eta^1_{u,v} = \Eis^0_\mathcal{D}(u) \cdot -\frac{6\pi^2}{N} \left(F^{(3)}_{-v}(\tau) \tau d\tau +  \overline{F^{(3)}_{-v}(\tau)} \overline{\tau} d\overline{\tau}\right),
\end{equation}
so we need to show that the integral
\begin{equation}\label{eqn:absolute convergence}
\int_0^\infty  \left|\sum_{u,v \in (\Z/N\Z)^2} \phi(u,v)\Eis^0_\mathcal{D}(u) \cdot \left(F^{(3)}_{-v}(iy)  +  \overline{F^{(3)}_{-v}(iy)}\right)\right|y dy
\end{equation}
converges. The Fourier expansion of $\Eis^0_\mathcal{D}(u)$ can be deduced from \cite[Lemma 16]{Brunault2016}:
\begin{equation*}
\Eis^0_\mathcal{D}(u) = \frac{2}{N} \log |g_u| = \frac{2}{N} \Re\left(\pi i B_2(\{\frac{a}{N}\}) \tau + \delta_0(a) \log (1-\zeta_N^b) + \sum_{n \geq 1} c_n q^{n/N}\right).
\end{equation*}
In particular $\Eis^0_\mathcal{D}(u)$ grows at most polynomially for $y\to\infty$. The Fourier expansion of $F^{(3)}_{-v}(\tau)$ is given in \cite[Lemme 3.3]{brunault:regRZ}. It has the constant term $-\frac{B_3(\{-v_1/N\})}{3}$, so the conditions on $\phi$ are equivalent to the assumption that the integrand in \eqref{eqn:absolute convergence} decays exponentially for $y\to\infty$. Using \textit{loc. cit.} we can also determine the Fourier expansion of $W_N F^{(3)}_{-v}(\tau) = -i N^{3/2} \tau^{-3}  F^{(3)}_{-v}(-1/(N\tau))$. It has constant term $-i N^{3/2}\zeta(-v_2/N, -2)$, which implies that $F^{(3)}_{-v}(iy)  +  \overline{F^{(3)}_{-v}(iy)}$ decays exponentially for $y\to 0$. The rest of the integrand can in fact be bounded for $y\to 0$, so this concludes the proof that \eqref{eqn:absolute convergence} is absolutely convergent. The third statement in the proposition follows from Lemmas \ref{action G shokurov} and \ref{g Eis01D}.

To show the absolute convergence of $\int_{Q\{0,\infty\}}\eta^2_{u,v}$ we calculate
\begin{equation}\label{eqn:integral over fibre 2}
\int_{p_Q} \eta^2_{u,v} =- \pi_1(\Eis^0_{\mathrm{hol}}(u))
 \frac{3}{2\pi i N}iy\left(F_{\sigma v}^{0,1}(\tau)(m\overline{\tau}+n) + F_{\sigma v}^{1,0}(\tau)(m\tau+n)
\right)
\end{equation}
where
\[
\pi_1\Eis^0_{\text{hol}}(u) = -\frac{2i\pi}{N} (F_{-u}^{(2)}(\tau)d\tau + \overline{F_{-u}^{(2)}(\tau)}d\overline{\tau}).
\]
Restricted to $\{0,\infty\}$ the differential form $\pi_1\Eis^0_{\text{hol}}(u)$ becomes $\frac{2\pi}{N}(F_{-u}^{(2)}(iy) - \overline{F_{-u}^{(2)}(iy)})dy $ and similar calculations as before show that $F_{-u}^{(2)}(iy) - \overline{F_{-u}^{(2)}(iy)}$ decays exponentially for $y\to\infty$ and $y\to 0$.
\end{proof}

Even if absolute convergence is given, some further care is needed when integrating Eisenstein symbols: contrary to what \cite[Remark 2]{Brunault2016} and \cite[Remarque 1.2(iii)]{brunault:regRZ} suggest, it is not true in general that if $\omega$ is a closed differential $2$-form on $E(N)(\C)$ and $\alpha,\beta,\gamma$ are cusps, then $\int_{P\{\alpha,\beta\}} \omega + \int_{P\{\beta,\gamma\}} \omega = \int_{P\{\alpha,\gamma\}} \omega$. The reason is that $\omega$ may have non trivial residues at the cusps. This motivates the following definition.

\begin{dfn}
Let $n \geq 1$ be an integer, and let $\omega$ be a closed differential $n$-form on $E(N)^{n-1}(\C)$. We say that \emph{$\omega$ has trivial residues at the cusps} if for every $g \in \GL_2(\Z/N\Z)$, every $Q \in \Z[X,Y]_{n-1}$ and every $\alpha \in \R$, we have
\begin{equation*}
\int_{Q \{iy,\alpha+iy\}} g^* \omega \xrightarrow[y \to \infty]{} 0.
\end{equation*}
\end{dfn}

\begin{exmp}
In the case $n=1$, let $u,v$ be modular units on $Y(N)(\C)$ such that the Milnor symbol $\{u,v\}$ has trivial tame symbols at all cusps. Then the $1$-form $\eta(u,v) = \log |u| \mathrm{darg}(v) - \log |v| \mathrm{darg}(u)$ has trivial residues at the cusps.
\end{exmp}

\begin{rem}
In general, let us consider the differential form $ \omega = \Eis^{k_1,k_2}_{\mathcal{D}}(u,v)$ from \cite[\S 6]{brunault:regRZ}. The property of $\omega$ having trivial residues at the cusps is probably related to the motivic element $\Eis^{k_1,k_2}(u,v)$ extending to Deligne's smooth compactification of $E(N)^{k_1+k_2}$, but we will not need this property in this article.
\end{rem}

\begin{prop}\label{Res Eis01}
Let $u=(a,b) \in (\Z/N\Z)^2$, $u \neq (0,0)$, and $v=(c,d) \in (\Z/N\Z)^2$. We have
\begin{align}
\label{Res Eis01 first}
\lim_{y \to \infty} \int_{X\{iy,\alpha+iy\}} \Eis^{0,1}_\mathcal{D}(u,v) = & -\frac{4\pi^2}{N^2} \delta_0(a) \log |1-\zeta_N^b| B_3(\{c/N\}) \alpha^2 \\
\nonumber  & + \frac{3\pi i}{N^2} B_2(\{a/N\}) \delta_0(c) \left(\hat{\zeta}(-d/N,2)-\hat{\zeta}(d/N,2)\right) \alpha \\
\label{Res Eis01 second} \lim_{y \to \infty} \int_{Y\{iy,\alpha+iy\}} \Eis^{0,1}_\mathcal{D}(u,v) = & -\frac{8\pi^2}{N^2} \delta_0(a) \log |1-\zeta_N^b| B_3(\{c/N\}) \alpha.
\end{align}
\end{prop}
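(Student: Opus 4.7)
The plan is to decompose $\Eis^{0,1}_\mathcal{D}(u,v) = \eta^1_{u,v} + \eta^2_{u,v}$ as in (\ref{def eta1})--(\ref{def eta2}) and evaluate each limit separately. The fibre of $P\{iy, \alpha+iy\} \to \{iy, \alpha+iy\}$ is a circle on $E(N)_\tau$, and the fibre integrals $\int_{p_P}\eta^j_{u,v}$ are already recorded in formulas (\ref{eqn:integral over fibre 1}) and (\ref{eqn:integral over fibre 2}) from the proof of Proposition \ref{prop:abs convergence}, upon specialising to $(m,n)=(1,0)$ for $X$ and $(m,n)=(0,1)$ for $Y$. Restricting the resulting $1$-forms on $Y(N)$ to the horizontal segment $\tau = t + iy$, where $d\tau = d\overline\tau = dt$, reduces the problem to computing $\lim_{y\to\infty} \int_0^\alpha f(t,y)\,dt$ for an explicit function $f$.

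Next I would substitute the Fourier expansions of $\Eis^0_\mathcal{D}(u)$, $\pi_1(\Eis^0_{\mathrm{hol}}(u))$, $F^{(2)}_{-u}$, $F^{(3)}_{-v}$, $F^{0,1}_{\sigma v}$ and $F^{1,0}_{\sigma v}$ recalled in \cite[Lemme 3.3]{brunault:regRZ} and in the proof of Proposition \ref{prop:abs convergence}. Every non-constant Fourier mode decays like $e^{-2\pi y/N}$ as $y \to \infty$, so only products of constant Fourier terms can contribute to the limit. These constant terms take values in the set $\{B_2(\{a/N\}),\, \delta_0(a)\log|1-\zeta_N^b|,\, B_3(\{c/N\}),\, \delta_0(c)\hat\zeta(\pm d/N, 2)\}$ (up to sign), which already accounts for the shape of the right-hand sides in the proposition. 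The integrand then becomes polynomial in $t$ of degree at most $2$ for $X$ and degree at most $1$ for $Y$, so integrating over $[0,\alpha]$ produces the $\alpha^2$ and $\alpha$ factors; the difference in degree between the two cases comes from the polynomial $m\tau+n$ appearing in (\ref{eqn:integral over fibre 1}) and (\ref{eqn:integral over fibre 2}) being $\tau$ for $X$ versus $1$ for $Y$.

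A critical point to verify is that the contributions that grow linearly in $y$ cancel between $\eta^1$ and $\eta^2$. The $\pi i B_2(\{a/N\}) \tau$ term in $\Eis^0_\mathcal{D}(u)$, paired with the constant term of $F^{(3)}_{-v}$ inside $\eta^1$, produces an a priori divergent contribution; so does the explicit $iy$ prefactor in (\ref{eqn:integral over fibre 2}) for $\eta^2$, paired with the constant part of $\pi_1(\Eis^0_{\mathrm{hol}}(u))$ and of $F^{0,1}_{\sigma v}, F^{1,0}_{\sigma v}$. Because $\Eis^{0,1}_\mathcal{D}(u,v)$ represents a genuine cohomology class and $\int_{X\{0,\infty\}}$ converges absolutely by Proposition \ref{prop:abs convergence} (at least for generic $\phi$), these divergences must cancel exactly, and the surviving finite parts should yield the two contributions in (\ref{Res Eis01 first})--(\ref{Res Eis01 second}).

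The main technical obstacle is this bookkeeping: keeping track of the $\sigma$-twist on $v$ appearing in $\eta^2$ but not in $\eta^1$; correctly identifying the constant terms of $F^{0,1}_{\sigma v}$ and $F^{1,0}_{\sigma v}$ as $\hat\zeta$-values with the right $\delta_0$ on the appropriate coordinate of $v$; and applying the parity identity $B_3(\{-x\}) = -B_3(\{x\})$ for $x \notin \Z$ to convert the $-v$ appearing in $F^{(3)}_{-v}$ to the $B_3(\{c/N\})$ that appears in the displayed formula. Once all of this is in place, the overall constants $-4\pi^2/N^2$ and $3\pi i/N^2$ emerge from the product of the factors $-6\pi^2/N$, $3/(2\pi iN)$, $-2i\pi/N$ and $2/N$ appearing in the fibre-integral formulas and the Fourier expansions, and one recovers the claimed expressions.
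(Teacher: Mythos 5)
Your proposal follows the paper's proof essentially verbatim: decompose $\Eis^{0,1}_\mathcal{D}(u,v)=\eta^1_{u,v}+\eta^2_{u,v}$, integrate along the fibres using \eqref{eqn:integral over fibre 1} and \eqref{eqn:integral over fibre 2}, keep only the constant Fourier terms as $y\to\infty$, and check that the $y$-linear contributions of $\eta^1$ and $\eta^2$ cancel, leaving exactly the stated $\alpha^2$ and $\alpha$ terms. The one caveat is that the cancellation of the divergent terms is established in the paper by the explicit computation itself (the coefficients $\pm\frac{4\pi^3}{N^2}B_2(\{a/N\})B_3(\{c/N\})\,y\,\alpha^2$ cancel), not by your soft appeal to absolute convergence of $\int_{X\{0,\infty\}}$, which concerns a different cycle and would not by itself justify the limit over horizontal segments; since you plan to do the explicit bookkeeping anyway, this is only a matter of phrasing.
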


\begin{proof}
Again we first compute the integral of $\Eis^{0,1}_\mathcal{D}(u,v)$ along the fibres of the fibration $p_{Q,\alpha} : Q \{iy,\alpha+iy\} \to \{iy,\alpha+iy\}$. We start from equation \eqref{eqn:integral over fibre 1} with $p_X$ replaced by $p_{X,\alpha}$ and integrate from $\tau=iy$ to $\tau=\alpha+iy$. Since every term involving $q^{n/N}=\exp(2\pi in\tau/N)$ with $n \geq 1$ will vanish when taking the limit as $y \to \infty$ this gives
\begin{equation}\label{int eta1}
\int_{X\{iy,\alpha+iy\}} \eta^1_{u,v} = -\frac{8\pi^2}{N^2} B_3(\{\frac{c}{N}\}) \left(-\pi B_2(\{\frac{a}{N}\}) y + \delta_0(a) \log |1-\zeta_N^b| \right) \frac{\alpha^2}{2} + o_{y \to \infty}(1).
\end{equation}
The integral of $\eta^2_{u,v}$ can be computed similarly, using the Fourier expansions \cite[Lemme 3.3, Prop. 8.1]{brunault:regRZ} and \cite[(20), Prop. 8.4]{brunault:regRZ}. We get
\begin{align}\label{int eta2}
\int_{X\{iy,\alpha+iy\}} \eta^2_{u,v} = & \frac{3}{2N^2} B_2(\{\frac{a}{N}\}) \left(-\frac{8\pi^3}{3} B_3(\{\frac{c}{N}\}) y\alpha^2\right. \\
\nonumber & \left.+2\pi i \delta_0(c) \left(\hat{\zeta}(-\frac{d}{N},2)-\hat{\zeta}(\frac{d}{N},2)\right) \alpha\right) + o_{y \to \infty}(1).
\end{align}
Adding (\ref{int eta1}) and (\ref{int eta2}), the terms in $y$ cancel out, giving (\ref{Res Eis01 first}). The identity (\ref{Res Eis01 second}) follows from a similar computation.
\end{proof}
\begin{cor}\label{cor trivial residues}
Let $b,b',d \in \Z/N\Z$ such that $d$ is a multiple of both $b$ and $b'$. Suppose furthermore that $b\equiv \pm b'\mod\gcd(d,N)$. Then the form 
\[
\Eis^{0,1}_{\mathcal{D}}((0,b),(0,d))-  \Eis^{0,1}_{\mathcal{D}}((0,b'),(0,d))
\]
has trivial residues at the cusps.
\end{cor}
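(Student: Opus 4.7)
The plan is to unwind the definition of \emph{trivial residues at the cusps} and show that, after pulling back by an arbitrary $g\in\GL_2(\Z/N\Z)$ and splitting the Shokurov cycle $Q\{iy,\alpha+iy\}$ with $Q=mX+nY$ into its $X$ and $Y$ components via Lemma~\ref{lem int sum shokurov}, each limit predicted by Proposition~\ref{Res Eis01} vanishes for the difference.

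First, by Lemma~\ref{g Eis01D}, if we write $g=\sabcd{p}{q}{r}{s}$, then
\begin{equation*}
g^*\bigl(\Eis^{0,1}_{\mathcal{D}}((0,b),(0,d)) - \Eis^{0,1}_{\mathcal{D}}((0,b'),(0,d))\bigr)
= \Eis^{0,1}_{\mathcal{D}}((br,bs),(dr,ds)) - \Eis^{0,1}_{\mathcal{D}}((b'r,b's),(dr,ds)),
\end{equation*}
so it suffices to check that the residues on $X\{iy,\alpha+iy\}$ and $Y\{iy,\alpha+iy\}$ of this difference vanish for every $g$, $\alpha$.

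Apply Proposition~\ref{Res Eis01} with $(a,b,c,d)=(br,bs,dr,ds)$ and with $(a,b,c,d)=(b'r,b's,dr,ds)$. The next step is the key observation that $\alpha^2$-coefficient of~\eqref{Res Eis01 first} and the $\alpha$-coefficient of~\eqref{Res Eis01 second} both carry the factor $\delta_0(br) B_3(\{dr/N\})$ (resp. $\delta_0(b'r)B_3(\{dr/N\})$). Since $b\mid d$ in $\Z/N\Z$, the congruence $br\equiv 0\pmod N$ forces $dr\equiv 0\pmod N$, hence $B_3(\{dr/N\})=B_3(0)=0$; likewise for $b'$. Thus these two contributions vanish termwise, not just in the difference. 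Consequently the full $Y$-integral contributes $0$ to the limit, and only the $\alpha$-term of~\eqref{Res Eis01 first} survives.

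It remains to check that
\begin{equation*}
B_2\!\left(\!\left\{\frac{br}{N}\right\}\!\right) \delta_0(dr) = B_2\!\left(\!\left\{\frac{b'r}{N}\right\}\!\right) \delta_0(dr)
\end{equation*}
for every $r$, the other factor $\delta_0(c)(\hat\zeta(-d/N,2)-\hat\zeta(d/N,2))$ being the same in both expressions. When $\delta_0(dr)=0$ there is nothing to prove; when $dr\equiv 0\pmod N$, we have $r\equiv 0\pmod{N/\gcd(d,N)}$, while the hypothesis $b\equiv\pm b'\pmod{\gcd(d,N)}$ means $\gcd(d,N)\mid(b\mp b')$. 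Multiplying gives $(b\mp b')r\equiv 0\pmod N$, so $br\equiv\pm b'r\pmod N$, and the symmetry $B_2(1-x)=B_2(x)$ of the second Bernoulli polynomial yields the required equality of $B_2$-values. Combining all of the above, the limit $\int_{Q\{iy,\alpha+iy\}} g^*\omega\to 0$ holds, completing the proof.

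The only delicate point is keeping track of exactly which factor $b\mid d$ uses (to kill the $B_3$ factor), and which factor the congruence $b\equiv\pm b'\pmod{\gcd(d,N)}$ uses (to match $B_2$-values), since both hypotheses are needed in different places.
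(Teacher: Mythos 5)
Your proposal is correct and follows essentially the same route as the paper: pull back by $g$ via Lemma~\ref{g Eis01D}, apply Proposition~\ref{Res Eis01}, use $b\mid d$ (so $br\equiv 0$ forces $dr\equiv 0$ and $B_3(0)=0$) to kill the $\alpha^2$-term and the $Y$-residue termwise, and use $b\equiv\pm b'\pmod{\gcd(d,N)}$ together with $B_2(1-x)=B_2(x)$ to cancel the remaining $\alpha$-term in the difference. Your explicit reduction of a general $Q=mX+nY$ to the $X$ and $Y$ cases via Lemma~\ref{lem int sum shokurov} is a point the paper leaves implicit, but it is the same argument.
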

\begin{proof}
Let $\left(\begin{smallmatrix} x&y\\w&z\end{smallmatrix}\right)\in \GL_2(\Z/N\Z)$. We need to show that the two limits calculated in Proposition \ref{Res Eis01} vanish for the form $\Eis^{0,1}_{\mathcal{D}}((bw,bz),(dw,dz))-  \Eis^{0,1}_{\mathcal{D}}((b'w,b'z),(dw,dz))$. If either $bw$ or $b'w$ is zero, then $dw$ is zero since $d$ is a multiple of $b$ and $b'$. This implies that the limit \eqref{Res Eis01 second} vanishes and only the second term of \eqref{Res Eis01 first} remains. If this term is non-zero, then we must have $dw=0$, so $\frac{N}{\gcd(d,N)}|w$. By our assumptions this implies $bw \equiv \pm b'w \mod N$ and so $B_2(\{bw/N\}) = B_2(\{b'w/N\})$ and the contributions of $\Eis^{0,1}_{\mathcal{D}}((bw,bz),(dw,dz))$ and $\Eis^{0,1}_{\mathcal{D}}((b'w,b'z),(dw,dz))$ to \eqref{Res Eis01 first} cancel each other.
\end{proof}

\section{Calculation}
\label{section:Calculation}

We first express the Mahler measure of $P=X+\frac{1}{X}+Y+\frac{1}{Y}+Z+\frac{1}{Z}-2$ as the integral of a closed differential form on $E_1(8)(\C)$.

By Lemma \ref{lem:lecacheux param}, we may and will identify the Deninger cycle $D'_P = D_P \backslash \{(-1,-1,3+2\sqrt{2})\}$ with a cycle on $E_1(8)(\C)$.

Note that for every $b \in \Z/N\Z$, $b \neq 0$, and every $d \in \Z/N\Z$, the $2$-form $\Eis^{0,1}_{\mathcal{D}}((0,b),(0,d))$ descends to a form on $E_1(N)(\C)$.

\begin{lem}\label{mP eta1}
We have
\begin{equation*}
m(P) = \pm \frac{1}{(2\pi i)^2} \int_{D'_P} \eta_1
\end{equation*}
where $\eta_1$ is the closed $2$-form on $E_1(8)(\C)$ defined by
\begin{equation*}
\eta_1 = \frac{2^9}{3} \left(\Eis^{0,1}_{\mathcal{D}}((0,3),(0,2))-\Eis^{0,1}_{\mathcal{D}}((0,1),(0,2)) \right).
\end{equation*}
\end{lem}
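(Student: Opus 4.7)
The plan is to combine Jensen's formula with the Deligne cup product calculus from Section \ref{section:Deligne cohomology} and the motivic identifications from Propositions \ref{proZ2} and \ref{pro piXY}. First I would start from formula \eqref{jensen}: since $P$ has leading coefficient $1$ as a Laurent polynomial in $Z$, we have $P^*(X,Y)=1$ and $m(P^*)=0$, so
\begin{equation*}
m(P) = \frac{1}{(2\pi i)^2}\int_{D_P}\log|Z|\,\frac{dX}{X}\wedge\frac{dY}{Y}.
\end{equation*}
Removing the single point $(-1,-1,3+2\sqrt{2})$ is harmless because it has measure zero in the $2$-cycle, so we can replace $D_P$ by $D'_P$ and, by Lemma \ref{lem:lecacheux param}, regard it as a $2$-cycle on $E_1(8)(\C)$.

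Next I would identify the integrand with a Deligne cup product. Let $\phi_{XY} = \log|X|\cdot i\,d\arg Y - i\,d\arg X\cdot\log|Y|$ be the canonical representative of $\{X,Y\}\in H^2_\mathcal{D}$ obtained from $\log|X|\cup\log|Y|$. By the cup product formula in Section \ref{section:Deligne cohomology},
\begin{equation*}
\log|Z|\cup \phi_{XY} \;=\; \log|Z|\cdot\pi_2\!\left(\tfrac{dX}{X}\wedge\tfrac{dY}{Y}\right)\;-\;i\,d\arg Z \wedge \phi_{XY}.
\end{equation*}
On $D'_P\subset T^2\times\mathbb{G}_m$ we have $|X|=|Y|=1$, so $\phi_{XY}$ vanishes and $\pi_2(dX/X\wedge dY/Y)=-d\arg X\wedge d\arg Y=\frac{dX}{X}\wedge\frac{dY}{Y}$ (both expressions being equal under the identifications $X=e^{i\phi}$, $Y=e^{i\psi}$). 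Hence on $D'_P$ the integrand is exactly $\log|Z|\cup\phi_{XY}$, a Deligne representative of $\mathrm{reg}(p_1^*\{Z\}\cup\{X,Y\})$.

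To obtain $\eta_1$, I would pull back to $E(8)$ via $\tilde\pi$. Combining Propositions \ref{proZ2} and \ref{pro piXY},
\begin{equation*}
\tilde\pi^*(p_1^*\{Z\}\cup\{X,Y\}) = 8(\mathrm{Eis}^0(0,3)-\mathrm{Eis}^0(0,1))\cup \bigl(\pm\tfrac{64}{3}\bigr)\mathrm{Eis}^1(0,2) = \pm\tfrac{2^9}{3}\bigl[\mathrm{Eis}^{0,1}((0,3),(0,2))-\mathrm{Eis}^{0,1}((0,1),(0,2))\bigr].
\end{equation*}
The individual Deligne forms $\mathrm{Eis}^{0,1}_\mathcal{D}((0,b),(0,d))$ are invariant under $\Gamma=\{\left(\begin{smallmatrix}*&*\\0&1\end{smallmatrix}\right)\}\subset\GL_2(\Z/8\Z)$ by Lemma \ref{g Eis01D}, because $(0,b)g=(0,b)$ and $(0,d)g=(0,d)$ for such $g$. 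Hence they descend to $E_1(8)(\C)$, and the descent of the right-hand side is precisely $\pm\eta_1$.

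The main obstacle is that $\log|Z|\cup\phi_{XY}$ and $\pm\eta_1$ are \emph{a priori} only cohomologous in $H^3_\mathcal{D}(E_1(8)(\C),\R(3))$, while $D'_P$ has nonempty topological boundary along the fibre $\{Z=1\}$, so cohomological equality is not enough. To handle this, write $\tilde\pi^*\phi_{XY}-\pm\frac{64}{3}\mathrm{Eis}^1_\mathcal{D}(0,2)=d\beta$ for some $\Gamma$-invariant $0$-form $\beta$, which descends to $E_1(8)$. Then the cup product formula gives $\log|Z|\cup d\beta=-i\,d\arg Z\wedge d\beta=i\,d(\beta\,d\arg Z)$ since $d\arg Z$ is closed off $Z\in\{0,\infty\}$. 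By Stokes,
\begin{equation*}
\int_{D'_P}\log|Z|\cup\phi_{XY} - \Bigl(\pm\int_{D'_P}\eta_1\Bigr) = i\int_{\partial D'_P} \beta\,d\arg Z.
\end{equation*}
The boundary $\partial D'_P$ sits in the locus $Z=1$, where $Z$ is real and $d\arg Z\equiv 0$; this forces the boundary term to vanish (the logarithmic growth of $\beta$ near the cusp $Z=1$ is controlled, since $d\arg Z$ vanishes to first order there). Combining everything yields the claimed identity. The delicate step is precisely this boundary analysis: one must check that $\beta$ is sufficiently regular at $Z=1$ relative to the order of vanishing of $d\arg Z$, so that the Stokes argument genuinely kills the ambiguity between the two Deligne representatives.
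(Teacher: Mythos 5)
Your proposal reaches the correct identity and shares the paper's skeleton (Jensen's formula with $P^*=1$, the identifications of Propositions \ref{proZ2} and \ref{pro piXY}, and the fact that $Z>0$ on the Deninger cycle), but at the decisive comparison step you take a detour that the paper avoids, and that detour is exactly where your write-up is left incomplete. The paper never compares the Deligne representative $\phi_{XY}$ of $\{X,Y\}$ with $\pm\frac{64}{3}\Eis^1_{\mathcal{D}}(0,2)$ up to exact forms. It only uses the holomorphic $2$-form attached to the class, via the well-defined map $H^2_{\mathcal{D}}(E_1(8)(\C),\R(2))\to\Omega^2(E_1(8)(\C))$: this yields the \emph{exact} equality of forms $\frac{dX}{X}\wedge\frac{dY}{Y}=\pm\frac{64}{3}\Eis^1_{\mathrm{hol}}(0,2)$, with no $d\beta$ ambiguity. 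The only place where the Deligne representative $\Eis^1_{\mathcal{D}}(0,2)$ itself enters $\eta_1$ is the $\eta^2$-part of $\Eis^{0,1}_{\mathcal{D}}$, see (\ref{def eta2}), and that term carries the factor $p_1^*\pi_1(dZ/Z)=i\operatorname{darg}Z$, which pulls back to zero on $D'_P$ since $Z$ is real positive there. Hence the paper obtains a pointwise identity of integrands on $D'_P$, with no Stokes argument and no residue or boundary analysis.

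Concerning your own route: the Stokes step is both unnecessary and the genuine weak point. Once you know $\tilde\pi^*\phi_{XY}\mp\frac{64}{3}\Eis^1_{\mathcal{D}}(0,2)=d\beta$ (which, by applying $d$ and injectivity of $\pi_1$ on holomorphic forms, already forces the associated holomorphic forms to coincide), the discrepancy between the two cup-product representatives is $-i\operatorname{darg}Z\wedge d\beta$. But $\arg Z\equiv 0$ on \emph{all} of $D'_P$, not merely on its boundary, so the pullback of $\operatorname{darg}Z$ to the $2$-cycle $D'_P$ vanishes identically and the discrepancy form restricts to zero on $D'_P$; there is nothing to integrate by parts. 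As written, you instead integrate $i\,d(\beta\operatorname{darg}Z)$ and invoke an unproved regularity claim for $\beta$ near the singular fibre over $Z=1$ (you would also have to control the closure of $D'_P$ at the deleted point and the fact that $\beta$ is a priori only defined, with logarithmic growth, away from the cusps and the support of $\dv(X)\cup\dv(Y)$). None of this is needed: restrict the forms to the cycle first, and your argument collapses to the paper's.
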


\begin{proof}
Since $P^*=1$, Jensen's formula (\ref{jensen}) gives
\begin{equation*}
m(P)=\frac{1}{(2\pi i)^2} \int_{D'_P} \log \abs{Z} \frac{dX}{X} \wedge \frac{dY}{Y}.
\end{equation*}
Since $m(P)$ is real, we may write
\begin{equation*}
m(P)=\frac{1}{(2\pi i)^2} \int_{D'_P} \log \abs{Z} \pi_2\left(\frac{dX}{X} \wedge \frac{dY}{Y}\right).
\end{equation*}
Let $\Omega^2(E_1(8)(\C))$ be the space of holomorphic $2$-forms on $E_1(8)(\C)$. There is a well-defined map
\begin{equation*}
H^2_{\mathcal{D}}(E_1(8)(\C),\R(2)) \to \Omega^2(E_1(8)(\C))
\end{equation*}
sending the class of a $1$-form $\phi$ to the unique holomorphic form $\omega$ such that $d\phi = \pi_1(\omega)$. By Proposition \ref{pro piXY}, we have $\{X,Y\}=\pm (64/3) \Eis^1(0,2)$ in $H^2_{\mathcal{M}}(E_1(8),\Q(2))$. Taking regulators and applying the above map, we see that
\begin{equation*}
\frac{dX}{X} \wedge \frac{dY}{Y} = \pm \frac{64}{3} \Eis^1_{\mathrm{hol}}(0,2).
\end{equation*}
Since $Z$ takes positive real values on $D$, the form $\pi_1(dZ/Z) = i \operatorname{darg}(Z)$ vanishes on $D'_P$. We thus have
\begin{equation*}
m(P)=\frac{1}{(2\pi i)^2} \int_{D'_P} \log \abs{Z} \pi_2\left(\pm \frac{64}{3} \Eis^1_{\mathrm{hol}}(0,2)\right) - \pi_1\left(\frac{dZ}{Z}\right) \wedge \pm \frac{64}{3} \Eis^1_{\mathcal{D}}(0,2).
\end{equation*}
It remains to express $\log |Z|$ and $dZ/Z$ in terms of Eisenstein symbols using Propositions \ref{proZ1} and \ref{proZ2}. Letting $p_1 : E_1(8) \to Y_1(8)$, we have
\begin{align*}
\log \abs{Z} & = p_1^* \left( 8 (\Eis^0_{\mathcal{D}}(0,3)-\Eis^0_{\mathcal{D}}(0,1)) \right) \\
\frac{dZ}{Z} & = p_1^* \left(8 (\Eis^0_{\mathrm{hol}}(0,3)-\Eis^0_{\mathrm{hol}}(0,1)) \right)
\end{align*}
The result now follows from the definition of $\Eis^{0,1}_{\mathcal{D}}$ (see (\ref{def eta1}) and (\ref{def eta2})).
\end{proof}

We now express the Mahler measure of $P$ as an integral over a suitable Shokurov cycle.

\begin{lem}\label{diffeo Ztau}
The map $\tau \mapsto Z(\tau)$ induces a diffeomorphism $\{1/2,\infty\} \xrightarrow{\cong} (1,3+2\sqrt{2})$.
\end{lem}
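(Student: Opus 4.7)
The geodesic $\{1/2,\infty\}$ is the vertical half-line $\tau_t=1/2+it$ with $t\in(0,\infty)$, on which $q=e^{2\pi i\tau_t}=-e^{-2\pi t}$ is real. My plan is to establish (a) $Z(\tau_t)\in\R$, (b) the boundary values $Z(\tau_t)\to 1$ as $t\to 0^+$ and $Z(\tau_t)\to 3+2\sqrt{2}$ as $t\to\infty$, and (c) strict monotonicity of $Z$ along the line.

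For (a), I would group the factors in the product of Proposition~\ref{proZ1} into complex-conjugate pairs: since $q\in\R$,
\[
(1-q^n\zeta_8^{3})(1-q^n\zeta_8^{-3})=1+\sqrt{2}\,q^n+q^{2n}, \qquad (1-q^n\zeta_8)(1-q^n\zeta_8^{-1})=1-\sqrt{2}\,q^n+q^{2n},
\]
are both real, hence $Z(\tau_t)\in\R$. The limit in (b) at $t\to\infty$ is immediate from $q\to 0$. For $t\to 0^+$, I would apply Propositions~\ref{prop:siegel-transformation} and~\ref{prop:evaluate siegel unit} to the expression $Z=-i\,g_{0,3}^{2}g_{0,1}^{-2}$ with the matrix $\gamma=\sabcd{1}{0}{2}{1}\in\SL_2(\Z)$, which sends $\infty$ to $1/2$. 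A direct check using $(0,b)\gamma=(2b,b)$ shows $\ord_{1/2}g_{0,3}=\ord_{1/2}g_{0,1}=-\tfrac{1}{96}$, so $\ord_{1/2}Z=0$ and the value at the cusp reduces to a finite product of explicit roots of unity (the correction factors $(1-\delta(\tilde a')\zeta_N^{b'})$ are trivially $1$, since the shifted first coordinates are $6$ and $2$). Unwinding Proposition~\ref{prop:siegel-transformation}(2) for each Siegel unit then yields the claimed value $Z(1/2)=1$.

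For (c), $Z$ is the structural morphism $E_1(8)\to Y_1(8)$, and $Y_1(8)$ has genus $0$. Combined with the fact that the six cusps of $X_1(8)$ map bijectively onto the six distinct values $\{\infty,0,\pm 1,3\pm 2\sqrt{2}\}$ (Lemma~\ref{lem:lecacheux param}), this forces $Z$ to be a Hauptmodul, so $Z\colon X_1(8)\xrightarrow{\sim}\mathbb{P}^1$ is an isomorphism and $Z'(\tau)\neq 0$ for every $\tau\in\h$. Along $\tau_t$ the derivative $dZ/dt=iZ'(\tau_t)$ is a nonzero real number by (a), so $Z$ is strictly monotone on $(0,\infty)$; together with (b) this gives the claimed strictly increasing diffeomorphism $\{1/2,\infty\}\xrightarrow{\sim}(1,3+2\sqrt{2})$.

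The main obstacle is the explicit evaluation $Z(1/2)=1$: it requires writing out Proposition~\ref{prop:siegel-transformation}(2) for each of the Siegel units $g_{0,3}$ and $g_{0,1}$ at $\gamma=\sabcd{1}{0}{2}{1}$ and checking that the resulting Dedekind-sum-type arguments combine (together with the prefactor $-i$) to produce the rational value $1$. Everything else in the argument is routine given the material already developed in the paper.
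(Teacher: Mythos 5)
Your proposal is correct in substance and agrees with the paper on the first two steps: reality of $Z$ on the geodesic follows from the product formula of Proposition \ref{proZ1}, and the boundary values $Z(i\infty)=3+2\sqrt{2}$, $Z(1/2)=1$ are obtained exactly as in the paper, by evaluating the Siegel-unit expression at the cusp via Propositions \ref{prop:siegel-transformation} and \ref{prop:evaluate siegel unit} (your matrix $\sabcd{1}{0}{2}{1}$ and the resulting units $g_{6,3},g_{2,1}$ match the paper's computation, as does $\ord_{1/2}Z=0$). Where you genuinely diverge is the injectivity/monotonicity step: the paper does not differentiate anything, but instead uses that $Z$ is a real Hauptmodul to reduce to injectivity of $\{1/2,\infty\}\to Y_1(8)(\R)$, which it gets from Snowden's description of real components of modular curves (\cite[Lemma 3.3.4]{snowden:real}); you instead argue that $Z'$ never vanishes on $\h$, so $t\mapsto Z(1/2+it)$ is a real-valued function with nowhere-vanishing (hence constant-sign) derivative, and the boundary values force it to be a strictly increasing diffeomorphism onto $(1,3+2\sqrt{2})$. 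Your route is more elementary and avoids the external citation, which is a real simplification.

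Two points in your step (c) need patching. First, "the six cusps go to six distinct values" does not by itself force $Z$ to have degree one: a degree-$d$ map can still separate the cusps. The Hauptmodul property should instead be taken from \S\ref{section:elliptic modular surface} (following Bertin--Lecacheux): the structural morphism $E_1(8)\to Y_1(8)$ is $(X,Y,Z)\mapsto Z$ with $Y_1(8)$ of genus $0$, which is precisely the statement that $Z$ generates the function field of $Y_1(8)$ — and this is also all the paper itself invokes. Second, the implication "$Z\colon X_1(8)\xrightarrow{\sim}\mathbb{P}^1$, hence $Z'(\tau)\neq 0$ on $\h$" silently uses that the projection $\h\to\Gamma_1(8)\backslash\h$ is unramified, i.e.\ that $\Gamma_1(8)$ acts freely on $\h$ (no elliptic points); without this a Hauptmodul can have critical points, as $j$ does at $i$ and $\rho$ for $\SL_2(\Z)$. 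Since $\Gamma_1(8)$ is torsion-free this is true (the paper notes it in its own proof), so both gaps are easily closed, but they should be stated.
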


\begin{proof}
By Proposition \ref{proZ1}, we have $Z(1/2+it) \in \R$ for every $t>0$. By Propositions \ref{prop:siegel-transformation} and \ref{prop:evaluate siegel unit} we can evaluate $Z$ at any cusp. We find $Z(i\infty) = 3+2\sqrt{2}$ and $Z(1/2) = \lim_{t\to\infty}(g_{6,3}(it) / g_{2,1}(it))^2 = 1$. A more detailed analysis will show that $Z$ is monotonly increasing on the path $t \mapsto 1/2+it$.

Note that $Z$ is an Hauptmodul for $\Gamma_1(8)$ and is defined over $\R$, so that $Z$ identifies the real points $Y_1(8)(\R)$ with an open subset of $\R$. It therefore suffices to show that the canonical map $\{1/2,\infty\} \to Y_1(8)(\R)$ is injective. This follows from the work of Snowden on real components of modular curves \cite{snowden:real}. We use \cite[Lemma 3.3.4]{snowden:real} for the modular curve $\Gamma_1(8) \backslash \h$ endowed with the complex conjugation $c(\tau)=-\overline{\tau}$. The matrix $\gamma = \left(\begin{smallmatrix} 1 & -1 \\ 0 & 1 \end{smallmatrix}\right) \in \Gamma_1(8)$ is admissible and $C^{o}_\gamma = \{\tau \in \h : c(\tau)=\gamma \tau\} = \{1/2,\infty\}$. Since $\Gamma_1(8)$ acts freely on $\h$, there are no elliptic points on $\Gamma_1(8) \backslash \h$ and the only element of $\Gamma_1(8)$ leaving $C^o_\gamma$ invariant is the identity. We may thus take $\mathcal{F}=\{1/2,\infty\}$ in \textit{loc.}\ \textit{cit.}, giving the desired result.
\end{proof}

By Lemmas \ref{fibration DP} and \ref{diffeo Ztau}, the Deninger cycle $D'_P$ is endowed with a fibration $q : D'_P \to \{1/2,\infty\}$. Note that for every $\tau \in \{1/2,\infty\}$, the fibre $q^{-1}(\tau)$ can be identified with a closed $1$-cycle on the elliptic curve $E_\tau \cong \C/(\Z+\tau \Z)$. Since $\tau \in \{1/2,\infty\}$, the elliptic curve $E_\tau$ is defined over $\R$.

\begin{lem}\label{lem Dtau}
For every $\tau \in \{1/2,\infty\}$, the class of $q^{-1}(\tau)$ generates $H_1(E_\tau,\Z)^+$.
\end{lem}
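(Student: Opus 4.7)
The plan is to identify $q^{-1}(\tau)$ with the fixed locus of a suitable antiholomorphic involution on $E_\tau$ and then compute its homology class. First, I would pin down the real structure on the fibre: for $\tau = 1/2+it$ in the interior of the geodesic, $-\bar\tau = \tau - 1$ lies in the $\Gamma_1(8)$-orbit of $\tau$, so $\tau$ is fixed modulo $\Gamma_1(8)$ under the antiholomorphic involution $(\tau,z) \mapsto (-\bar\tau, \bar z)$ that defines the real structure on the universal elliptic curve $E_1(8)$. Composing with the translation $\tau \mapsto \tau + 1$ in $\Gamma_1(8)$ then yields that the induced antiholomorphic involution on $E_\tau = \C/L_\tau$ (with $L_\tau = \Z + \tau\Z$) is $c(z) = \bar z$. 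This normalisation is consistent: $L_\tau$ is conjugation-invariant since $\bar\tau = 1-\tau$, and $c$ fixes the $\Q$-rational section $A$ at $z = 1/8$.

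Next, since $P$ has real coefficients, complex conjugation preserves $V(P)(\C)$; on the Deninger cycle, where $|X|=|Y|=1$ and $Z\in\R$, one has $(\bar X, \bar Y, \bar Z) = (X^{-1}, Y^{-1}, Z)$. Iterating the Bertin--Lecacheux identities $t_{2A}^*X = Y$ and $t_{2A}^*Y = 1/X$ shows that the automorphism $(X, Y, Z) \mapsto (X^{-1}, Y^{-1}, Z)$ of $V(P)$ is identified, under the Lecacheux parametrisation, with $t_{4A}$, which on $E_\tau$ is the translation $z \mapsto z + 1/2$. Therefore $q^{-1}(\tau)$ is contained in the fixed locus of the antiholomorphic involution $\tilde c: z \mapsto \bar z + 1/2$ on $E_\tau$.

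A short computation shows that for $\tau = 1/2+it$, the equation $\bar z \equiv z + 1/2 \pmod{L_\tau}$ forces $\Im(z) \in t/2 + t\Z$ with $\Re(z)$ arbitrary, cutting out modulo $L_\tau$ the single circle $C_\tau = \{x + it/2 : x \in \R/\Z\}$. In the basis $(e_1, e_2) = ([1], [\tau])$ of $H_1(E_\tau, \Z)$, one has $[C_\tau] = e_1$. By Lemma~\ref{fibration DP}, the fibre $q^{-1}(\tau) \cong S^1$ is a connected compact $1$-submanifold of $E_\tau$ contained in $C_\tau \cong S^1$, so the two coincide. Finally, the map $c_*$ on $H_1(E_\tau, \Z)$ sends $e_1 \mapsto e_1$ and $e_2 \mapsto [\bar\tau] = [1-\tau] = e_1 - e_2$, giving the matrix $\sabcd{1}{1}{0}{-1}$ whose $+1$-eigenspace is $\Z e_1$; hence $[q^{-1}(\tau)] = \pm e_1$ generates $H_1(E_\tau, \Z)^+$.

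The most delicate step is the initial identification of the real structure $c$ on $E_\tau$ (including the normalisation enforced by the rationality of $A$) and of the involution $(X, Y, Z) \mapsto (X^{-1}, Y^{-1}, Z)$ with $t_{4A}$; once these are pinned down, the passage from ``$q^{-1}(\tau) \subset C_\tau$'' to ``$q^{-1}(\tau) = C_\tau$'' follows from the purely topological fact that a compact connected $1$-submanifold without boundary of $S^1$ must be all of $S^1$, and the final homology computation is routine.
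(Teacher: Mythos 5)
Your proof is correct, but it follows a genuinely different route from the paper's. The paper argues in two steps: first, since the conjugation $(X,Y,Z)\mapsto(\overline{X},\overline{Y},\overline{Z})$ preserves $D'_P$ with its orientation and fixes the geodesic $\{1/2,\infty\}$ pointwise, the class $[q^{-1}(\tau)]$ lies in $H_1(E_\tau,\Z)^+$ and equals $m[\gamma_\tau^+]$ for an integer $m$ independent of $\tau$ by continuity; second, $m=1$ is pinned down \emph{numerically}, by comparing the periods $\int_{q^{-1}(\tau_0)}\omega_{\tau_0}$ and $\int_{\gamma_{\tau_0}^+}\omega_{\tau_0}$ at $\tau_0=1/2+i$ with \texttt{ellpointtoz} in Pari/GP and invoking the a priori integrality of $m$. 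You instead determine the fibre exactly: from $t_{2A}^*X=Y$, $t_{2A}^*Y=1/X$ you identify $(X,Y,Z)\mapsto(X^{-1},Y^{-1},Z)$ with $t_{4A}$, so that on $D_P$ complex conjugation coincides with translation by $4A$; hence $q^{-1}(\tau)$ lies in the fixed circle of the antiholomorphic involution $z\mapsto\bar z+1/2$, which it must fill because it is itself a circle by Lemma \ref{fibration DP}, and the class $\pm e_1$ is read off directly, generating the $+$-eigenspace. Your approach buys a numerics-free argument and an exact description of the fibre (the real circle translated to height $\Im(\tau)/2$), at the cost of having to pin down the real structure $(\tau,z)\mapsto(-\bar\tau,\bar z)$ in the Lecacheux coordinates; your justification of that step is terse but legitimate, since the $\wp$-expressions for $X,Y,Z$ have rational coefficients and conjugation fixes the rational sections $0$ and $A$, which forces $z\mapsto\bar z$ on the fibre. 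The paper's route is shorter but rests on a numerical verification of $m$; note also that you obtain $[q^{-1}(\tau)]=\pm e_1$ rather than $+e_1$, which is all the statement requires (the paper resolves the overall sign only at the very end by positivity of $m(P)$ and $\Lambda(f,3)$).
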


\begin{proof}
The complex conjugation $c(X,Y,Z)=(\overline{X},\overline{Y},\overline{Z})$ on $E_1(8)(\C)$ leaves $D'_P$ stable and preserves its orientation. Since $c$ fixes pointwise the cycle $\{1/2,\infty\}$ on $Y_1(8)(\C)$, we deduce that $c$ leaves $q^{-1}(\tau)$ stable and preserves its orientation. Thus the class of $q^{-1}(\tau)$ belongs to $H_1(E_\tau,\Z)^+$.

Let $\gamma_\tau^+ : [0,1] \to E_\tau$ denote the canonical generator of $H_1(E_\tau,\Z)^+$, defined by $\gamma_\tau^+(t) = [t]$. Since the fibre $q^{-1}(\tau)$ varies continuously with $\tau$, we must have $[q^{-1}(\tau)] = m [\gamma_\tau^+]$ for some integer $m$ not depending on $\tau$.

In order to determine $m$, we introduce for every $\tau \in \{1/2,\infty\}$ the holomorphic form $\omega_\tau$ on $E_\tau$ defined by
\begin{equation*}
\omega_\tau = \frac{dU}{2V+(Z(\tau)+\frac{1}{Z(\tau)}-2)U}
\end{equation*}
Using the function \texttt{ellpointtoz} of Pari/GP or integrating $\omega_\tau$ numerically, we check that for $\tau_0=1/2+i$, we have
\begin{equation*}
\int_{q^{-1}(\tau_0)} \omega_{\tau_0} \approx \int_{\gamma_{\tau_0}^+} \omega_{\tau_0}.
\end{equation*}
Since we know a priori that $m$ is an integer, we deduce that $m=1$.
\end{proof}

In view of Lemma \ref{lem Dtau}, we are naturally led to search for a Shokurov cycle on $E_1(8)(\C)$ sharing the same properties with $D'_P$. By definition, the Shokurov cycle $Y\{1/2,\infty\}_1$ does the job: it is endowed with a fibration over $\{1/2,\infty\}$ and its fibre over $\tau$ is equal to $\gamma_\tau^+$.

\begin{prop}
We have
\begin{equation*}
\int_{D'_P} \eta_1 = \int_{Y\{1/2,\infty\}_1} \eta_1.
\end{equation*}
\end{prop}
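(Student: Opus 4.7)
The plan is a Stokes-theorem argument exploiting the common fibration of both cycles over the geodesic $\{1/2,\infty\}$. On one side, the Deninger cycle $D'_P$ fibres via the map $q$ induced by $Z$, using Lemmas \ref{fibration DP} and \ref{diffeo Ztau}; on the other side, $Y\{1/2,\infty\}_1$ fibres tautologically with fibre $\gamma_\tau^+ = \{[\tau,t]: t\in[0,1]\}$ above $\tau$. The crucial input is that for every non-cuspidal $\tau$, the two fibres are \emph{homologous} 1-cycles in $E_\tau$: both represent the generator $[\gamma_\tau^+]$ of $H_1(E_\tau,\Z)^+$, by Lemma \ref{lem Dtau} for $D'_P$ and by construction for $Y\{1/2,\infty\}_1$.

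For each non-cuspidal $\tau$ I would choose a 2-chain $C_\tau \subset E_\tau$ with $\partial C_\tau = q^{-1}(\tau) - \gamma_\tau^+$, varying smoothly in $\tau$, and patch them into a 3-chain $\Sigma$ in $E_1(8)(\C)$. For any compact subpath $L \subset \{1/2,\infty\}$ bounded away from the cusps, the boundary of $\Sigma|_L$ is
\begin{equation*}
\partial(\Sigma|_L) = D'_P|_L - Y\{1/2,\infty\}_1|_L + C_{\tau_1} - C_{\tau_0},
\end{equation*}
where $\tau_0,\tau_1$ are the endpoints of $L$. Since $\eta_1$ is closed (being a representative of the regulator of a motivic cohomology class), Stokes' theorem gives
\begin{equation*}
\int_{D'_P|_L} \eta_1 - \int_{Y\{1/2,\infty\}_1|_L} \eta_1 = \int_{C_{\tau_0}} \eta_1 - \int_{C_{\tau_1}} \eta_1.
\end{equation*}
Exhausting $\{1/2,\infty\}$ by such $L$, the theorem reduces to the vanishing of the cuspidal contributions $\int_{C_\tau}\eta_1$ as $\tau$ approaches each cusp of $\{1/2,\infty\}$.

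The cuspidal 2-chains can be arranged so that, after pulling back by an element of $\SL_2(\Z)$ sending $i\infty$ to the relevant cusp, they take the Shokurov shape $Q\{iy,\alpha+iy\}$. Their vanishing in the limit $y\to\infty$ is then an instance of $\eta_1$ having trivial residues at all cusps, supplied by Corollary \ref{cor trivial residues}. To verify its hypotheses for $\eta_1 = \tfrac{2^9}{3}\bigl(\Eis^{0,1}_{\mathcal{D}}((0,3),(0,2)) - \Eis^{0,1}_{\mathcal{D}}((0,1),(0,2))\bigr)$, we take $N=8$, $b=3$, $b'=1$, $d=2$: both $3\cdot 6 \equiv 2$ and $1\cdot 2 \equiv 2 \pmod 8$, so $d=2$ is a multiple of $b$ and $b'$ in $\Z/8\Z$, and $b\equiv b' \pmod{\gcd(d,N)=2}$ since both are odd.

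The principal obstacle is the cuspidal analysis: one must construct the fibrewise homotopy so that its cuspidal boundary pieces are genuinely of the Shokurov form required by the trivial-residues criterion, and one must justify the passage to the limit in the exhaustion. The latter requires absolute convergence of $\int_{Y\{1/2,\infty\}_1}\eta_1$ and the finiteness of $\int_{D'_P}\eta_1$ (itself guaranteed by the original Mahler-measure presentation), which can be checked along the lines of Proposition \ref{prop:abs convergence} after transporting $Y\{1/2,\infty\}_1$ to the standard geodesic $\{0,\infty\}$ by the appropriate $\SL_2(\Z)$-action.
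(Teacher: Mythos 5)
Your overall strategy --- interpolate the two cycles fibrewise over $\{1/2,\infty\}$ and apply Stokes --- can be made to work, but the step you use to kill the cuspidal boundary terms is misapplied. The chains $C_\tau$ you produce are $2$-chains contained in a \emph{single} fibre $E_\tau$; they project to a point of $Y_1(8)$, so they cannot be ``arranged'' to take the Shokurov shape $Q\{iy,\alpha+iy\}$, which projects onto a horizontal segment of length $\alpha$ in the base. Corollary \ref{cor trivial residues} therefore says nothing about $\int_{C_\tau}\eta_1$; that criterion (whose hypotheses you do verify correctly for $b=3$, $b'=1$, $d=2$) is needed only later, in \S\ref{section:Calculation}, when one compares cycles lying over \emph{different} geodesics ending at the same cusp. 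The convergence step has a similar problem: there is no $g\in\SL_2(\Z)$ with $g\{0,\infty\}=\{1/2,\infty\}$ (a matrix fixing $\infty$ is upper triangular and sends $0$ to an integer, and the swapped alternative fails the determinant condition), so you cannot transport $Y\{1/2,\infty\}_1$ to the standard geodesic by a single matrix and quote Proposition \ref{prop:abs convergence}; decomposing into Manin symbols instead runs exactly into the residue and additivity issues the paper deliberately postpones.

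The gap is easy to close, and closing it shows the Stokes machinery is unnecessary. Each summand of $\eta_1$ (see (\ref{def eta1}), (\ref{def eta2})) contains either the form $\pi_2(\Eis^1_{\mathrm{hol}})$, of type $(2,0)+(0,2)$ on the surface, or a $1$-form pulled back from the base; hence $\eta_1$ restricts to zero on every fibre $E_\tau$ and $\int_{C_\tau}\eta_1=0$ identically, with no limit to take. More efficiently --- and this is the paper's actual proof --- integrate along the fibres of the two fibrations over the common base $\{1/2,\infty\}$: for fixed $\tau$, the contraction of $\eta_1$ against a lift of a base tangent vector, restricted to $E_\tau$, is a \emph{closed} $1$-form on $E_\tau$ (for the $\eta^1$-part its coefficients depend only on $\tau$, and for the $\eta^2$-part its differential is the restriction of $\pi_1(\Eis^1_{\mathrm{hol}})$, which vanishes on the fibre), so its integrals over the homologous fibres $q^{-1}(\tau)$ and $p_Y^{-1}(\tau)$ (Lemma \ref{lem Dtau}) coincide for every $\tau$. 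The equality of the two surface integrals then follows at once, and convergence transfers from the $D'_P$-side because the fibre integrals are literally equal functions of $\tau$.
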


\begin{proof}
By integrating over the fibres of $q : D'_P \to \{1/2,\infty\}$, we get
\begin{equation*}
\int_{D'_P} \eta_1 = \int_{1/2}^\infty \int_{q} \eta_1.
\end{equation*}
Similarly, using the fibration $p_Y : Y\{1/2,\infty\}_1 \to \{1/2,\infty\}$ we have
\begin{equation*}
\int_{Y\{1/2,\infty\}_1}  \eta_1 = \int_{1/2}^\infty \int_{p_Y} \eta_1.
\end{equation*}
By Lemma \ref{lem Dtau}, the fibres $q^{-1}(\tau)$ and $p_Y^{-1}(\tau)$ are homologous, hence $\int_q \eta_1 = \int_{p_Y} \eta_1$.
\end{proof}

Pulling back $\eta_1$ to $E(8)(\C)$, we define
\begin{equation*}
\eta = \tilde{\pi}^* \eta_1 = \frac{2^9}{3} \left(\Eis^{0,1}_{\mathcal{D}}((0,3),(0,2))-\Eis^{0,1}_{\mathcal{D}}((0,1),(0,2)) \right).
\end{equation*}
Using Lemma \ref{shokurov E E1}, we get
\begin{equation}\label{calculation eq1}
m(P) = \pm \frac{1}{(2\pi i)^2} \int_{Y\{1/2,\infty\}} \eta.
\end{equation}

In order to proceed further, we decompose the modular symbol $Y\{1/2,\infty\}$ as a sum of Manin symbols of the form $g_* (X\{0,\infty\})$ with $g \in \GL_2(\Z/8\Z)$. We have
\begin{align}
\nonumber Y \left\{\frac12,\infty\right\} & = Y \left\{\frac12,0 \right\} + Y \{0,\infty\} \\
\nonumber & = \begin{pmatrix} 0 & -1 \\ 1 & -2 \end{pmatrix} \cdot (X-2Y) \{0,\infty\} - \begin{pmatrix} 0 & -1 \\ 1 & 0 \end{pmatrix} \cdot X \{0,\infty\} \\
\label{sum gX0oo} & = \begin{pmatrix} 0 & -1 \\ 1 & -2 \end{pmatrix} \cdot X \{0,\infty\} + 2 \begin{pmatrix} -1 & 0 \\ -2 & -1 \end{pmatrix} \cdot X \{0,\infty\}  - \begin{pmatrix} 0 & -1 \\ 1 & 0 \end{pmatrix} \cdot X \{0,\infty\}.
\end{align}

At this point we face two problems. Firstly, the integrals of $\eta$ over the Shokurov cycles $Y\{1/2,0\}$ and $Y\{0,\infty\}$ do not converge absolutely. Secondly, we must prove that the integral of $\eta$ over a sum of modular symbols is equal to the sum of the integrals of $\eta$ over these modular symbols (whenever these integrals are convergent).

In order to overcome these issues, we consider the hyperbolic ideal triangle with vertices $0$, $1/2$ and $\infty$, and we truncate it by cutting along horocycles centered at these cusps. We get the hexagon shown below.

\begin{equation*}
\begin{tikzpicture}[scale=8]
\draw [name path=BB'] (0,1)--(0,0) node [below] {$0$};
\draw [name path=CC'] (0,0) arc (180:0:0.25) node [below] {$1/2$};
\draw [name path=AA'] (0.5,0) -- (0.5,1);
\draw (0,0.85) node [left] {$B$} -- (0.5,0.85) node [right] {$A'$};
\draw (0,0.16) [name path=B'C] arc (90:54:0.08);
\draw (0.5,0.16) [name path=C'A] arc (90:126:0.08);
\draw [name intersections={of=BB' and B'C}] 
(intersection-1) node [left] {$B'$};
\draw [name intersections={of=B'C and CC'}] 
(intersection-1) node [below right] {$C$};
\draw [name intersections={of=CC' and C'A}] 
(intersection-1) node [below left] {$C'$};
\draw [name intersections={of=C'A and AA'}] 
(intersection-1) node [right] {$A$};
\end{tikzpicture}
\end{equation*}

Since $\eta$ is closed, Stokes' theorem on the domain $AA'BB'CC'$ implies
\begin{equation*}
\left(\int_{Y\{A,A'\}} + \int_{Y\{A',B\}}  + \int_{Y\{B,B'\}} + \int_{Y\{B',C\}} + \int_{Y\{C,C'\}}  + \int_{Y\{C',A\}} \right) \eta = 0.
\end{equation*}
By Corollary \ref{cor trivial residues}, the $2$-form $\eta$ has trivial residues at the cusps. It follows that when $A,A',B,B',C,C'$ approach the cusps, we have
\begin{equation*}
\lim \int_{Y\{A',B\}} \eta = \lim \int_{Y \{B',C\}} \eta = \lim \int_{Y \{C',A\}} \eta = 0,
\end{equation*}
so
\begin{equation*}
\int_{Y\{1/2,\infty\}} \eta = \lim \left(\int_{Y\{C',C\}} \eta + \int_{Y \{B',B\}} \eta \right).
\end{equation*}
We now apply suitable matrices of $\SL_2(\Z)$ in order to reduce to integrals over the domain $X\{0,\infty\}$ as in (\ref{sum gX0oo}). Using Lemma \ref{action G shokurov} and Lemma \ref{lem int sum shokurov}, we get
\begin{equation}\label{calculation eq2}
\int_{Y\{1/2,\infty\}} \eta = \lim_{y \to \infty} \int_{X \{i/y,iy\}} \eta'
\end{equation}
with
\begin{equation*}
\eta' = \begin{pmatrix} 0 & -1 \\ 1 & -2 \end{pmatrix}^* \eta + 2 \begin{pmatrix} -1 & 0 \\ -2 & -1 \end{pmatrix}^* \eta - \begin{pmatrix} 0 & -1 \\ 1 & 0 \end{pmatrix}^* \eta.
\end{equation*}
Note that we used the fact that
\begin{equation*}
\int_{Y\{i/y,iy\}} \omega = - \int_{\sigma_* (X\{i/y,iy\})} \omega =  - \int_{X\{i/y,iy\}} \sigma^* \omega
\end{equation*}
for a closed $2$-form $\omega$ on $E(8)(\C)$.

Using Corollary \ref{g Eis01D}, we have
\begin{equation*}
\eta' = \frac{2^9}{3} \Eis^{0,1}_{\mathcal{D}}(\phi)
\end{equation*}
with
\begin{equation*}
\phi = [(3,2),(2,4)]-[(1,6),(2,4)]+2[(2,5),(4,6)]-2[(6,7),(4,6)]-[(3,0),(2,0)]+[(1,0),(2,0)].
\end{equation*}
By Proposition \ref{prop:abs convergence} the integral of $\Eis^{0,1}_{\mathcal{D}}(\phi)$ over $X\{0,\infty\}$ converges absolutely.

We finally may apply the main result of \cite{brunault:regRZ}. More precisely, we use \cite[Thm 1.1]{brunault:regRZ} for the terms $[(a,b),(c,d)]$ with $d \neq 0$, and we use \cite[Thm 9.5]{brunault:regRZ} for the term $-[(3,0),(2,0)]+[(1,0),(2,0)]$. We restate the two theorems in our situation. Recall the Eisenstein series
\[
G_{a,b}^{(k)}=a_0(G_{a,b}^{(k)})+\sum_{\substack{m,n\geq 1\\ m\equiv a, n\equiv b(N)}}m^{k-1}q^{mn}+(-1)^k\sum_{\substack{m,n\geq 1\\ m\equiv -a, n\equiv -b(N)}}m^{k-1}q^{mn}.
\]
The constant term $a_0(G_{a,b}^{(k)})$ is given in \cite[Def. 3.5]{brunault:regRZ}. They are modular forms of weight $k$ with respect to $\Gamma_1(N^2)$, except in the case $k=2$ and $a=0$. However in this case, for any $b'\in\Z/N\Z$, the function $G_{0,b}^{(2)}-G_{0,b'}^{(2)}$ is a modular form with respect to $\Gamma_1(N^2)$. 
\begin{thm}[Theorem 1.1 of \cite{brunault:regRZ}\label{thm:brunault-regrz-thm1.1} for $k_1=0$ and $k_2=1$]
Let $N\geq 3$ and $u=(a,b),v=(c,d)\in(\Z/N\Z)^2$ with $u\neq (0,0)$. If $d\neq 0$, then
\begin{equation}\label{thm regRZ}
\int_{X\{0,\infty\}}^* \Eis_{\mathcal{D}}^{0,1}(u,v) \\
= \frac{3}{N^{3}}(2\pi)^2
\Lambda^*(G^{(2)}_{d,a}G^{(1)}_{b,-c} - G^{(2)}_{d,-a}G^{(1)}_{b,c},0). 
\end{equation}
where $\Lambda^*(\cdot,0)$ denotes the regularized value at $s=0$ \cite[Def. 3.13]{brunault:regRZ}.
\end{thm}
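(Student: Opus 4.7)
The strategy, following the Rogers--Zudilin method as extended to higher-order Eisenstein symbols in \cite{brunault:regRZ}, is to first collapse the $2$-dimensional integral over $X\{0,\infty\}$ to a $1$-dimensional Mellin integral along the imaginary axis, and then apply a change of summation indices that identifies the result as an $L$-value of a product of Eisenstein series. Concretely, I would decompose $\Eis^{0,1}_\mathcal{D}(u,v) = \eta^1_{u,v} + \eta^2_{u,v}$ as in \eqref{def eta1}--\eqref{def eta2} and integrate each piece over the fibres of $p_X : X\{0,\infty\} \to \{0,\infty\}$, using the fibre-integral formulas \eqref{eqn:integral over fibre 1} and \eqref{eqn:integral over fibre 2} already established during the proof of Proposition \ref{prop:abs convergence}. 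Parametrizing the base by $\tau = iy$ with $y \in (0,\infty)$, the left-hand side becomes a Mellin-type integral
\[
\int_0^\infty \varphi_{u,v}(y)\, y\, dy
\]
where $\varphi_{u,v}$ is a linear combination of products of the weight-$2$ Eisenstein data $\Eis^0_\mathcal{D}(u)$ and $\pi_1 \Eis^0_\mathrm{hol}(u)$ with the weight-$1$ Eisenstein series $F^{(3)}_{-v}$, $F^{0,1}_{\sigma v}$, $F^{1,0}_{\sigma v}$ evaluated on the imaginary axis.

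The Rogers--Zudilin trick enters at the next step. Expanding the two factors as $q$-series produces a double sum indexed by pairs $(m,n)$ of positive integers with congruence conditions. The substitution $(m,n) \mapsto (mn,n)$, combined with swapping the roles of the weight-$2$ and weight-$1$ factors, rewrites the double sum as the $q$-expansion of the weight-$3$ form $G^{(2)}_{d,a} G^{(1)}_{b,-c}$; the antisymmetrized companion term $-G^{(2)}_{d,-a} G^{(1)}_{b,c}$ arises from the $\iota$-antiinvariant part built into the Deninger--Scholl construction of the Eisenstein symbol via the projector $\Pi_\varepsilon$. After this identification the Mellin integral becomes precisely $\Lambda^*(G^{(2)}_{d,a}G^{(1)}_{b,-c} - G^{(2)}_{d,-a}G^{(1)}_{b,c},\, 0)$, and the overall constant $\frac{3}{N^3}(2\pi)^2$ is tracked by keeping careful account of the normalizations of the Fourier expansions in \cite[Prop. 8.1, Prop. 8.4]{brunault:regRZ}.

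The principal obstacle is regularization. Although Proposition \ref{prop:abs convergence} gives absolute convergence of the integral on the left-hand side under the hypothesis $d \neq 0$, the right-hand side involves the Mellin transform of $G^{(2)}_{d,a} G^{(1)}_{b,-c}$, which is generically \emph{not} cuspidal: its constant term at $i\infty$ and at various other cusps can be non-zero, so the naive Mellin integral diverges at both endpoints. The remedy is to work throughout with the regularized Mellin transform $\Lambda^*(\cdot, s)$ of \cite[Def. 3.13]{brunault:regRZ}. One proves the Rogers--Zudilin identity first for $\Re(s)$ sufficiently large, where the sums and integrals converge absolutely and the exchange of summation order is legitimate, and then analytically continues both sides down to $s=0$. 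The antisymmetric combination of the two $G^{(2)}G^{(1)}$ products is essential here: it is precisely this antisymmetrization that forces the divergent contributions of the constant terms of the Eisenstein series to cancel in a compatible way on both sides, producing a well-defined finite value at $s=0$ and yielding the asserted formula.
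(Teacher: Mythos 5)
This statement is not proved in the paper at all: it is Theorem 1.1 of \cite{brunault:regRZ}, specialized to $(k_1,k_2)=(0,1)$, and the authors simply restate it (together with its $d=0$ variant, Theorem 9.5 of the same reference) before applying it in \S\ref{section:Calculation}. So there is no in-paper argument to compare against; what you have written is a sketch of the proof of the cited external result, and it has to be judged on its own.

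As such a sketch, it names the right ingredients --- fibre integration to reduce to a Mellin-type integral over $\{0,\infty\}$, the Rogers--Zudilin reindexing of the double $q$-expansion, and analytic continuation in an auxiliary variable $s$ --- but the heart of the theorem is asserted rather than proved: the interchange that turns the weight-$2$ datum attached to $u$ times the weight-$1$ datum attached to $v$ into precisely $G^{(2)}_{d,a}G^{(1)}_{b,-c}-G^{(2)}_{d,-a}G^{(1)}_{b,c}$, with the exact constant $\tfrac{3}{N^3}(2\pi)^2$, is the entire content, and ``substitute $(m,n)\mapsto(mn,n)$ and swap the factors'' is not a derivation. Your convergence discussion is also off in two respects. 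First, Proposition \ref{prop:abs convergence} does not give absolute convergence under $d\neq 0$: item (1) concerns only $\eta^2_{u,v}$, and item (2) requires the vanishing conditions \eqref{eqn: abs convergence cond 1}--\eqref{eqn: abs convergence cond 2}, which involve the components $a$, $b$, $c$ and generally fail for a single pair $(u,v)$; this is exactly why the left-hand side of the theorem is the \emph{regularized} integral $\int^*$, whose definition from \cite{brunault:regRZ} any complete proof must invoke, whereas you treat it as an honest integral. Second, the antisymmetrized combination does not make the divergences cancel: $G^{(2)}_{d,a}G^{(1)}_{b,-c}-G^{(2)}_{d,-a}G^{(1)}_{b,c}$ is in general not cuspidal, which is why the right-hand side is the regularized value $\Lambda^*(\cdot,0)$ rather than a convergent $\Lambda(\cdot,0)$. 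Finally, the hypothesis $d\neq 0$ plays a different role than you assign to it: it guarantees that the weight-$2$ factors $G^{(2)}_{d,\pm a}$ are genuine modular forms (the exceptional non-modular case being weight $2$ with first index $0$), so the right-hand side makes sense for a single symbol; when $d=0$ one must pass to formal linear combinations $\sum_i\lambda_i(a_i,b)$ with $\sum_i\lambda_i=0$, as the paper notes immediately after the statement.
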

In the case $d=0$, Theorem \ref{thm:brunault-regrz-thm1.1} also holds provided one replaces $u$ by a formal linear combination $\sum_i \lambda_i (a_i,b)$ satisfying $\sum_i \lambda_i=0$, taking the appropriate linear combination of Eisenstein series in the right hand side of (\ref{thm regRZ}), see \cite[Thm 9.5]{brunault:regRZ}.

%\begin{thm}[Theorem 9.5 of \cite{brunault:regRZ}\label{thm:brunault-regrz-thm9.5} for $k_1=0$ and $k_2=1$]
%Let $u=(a,b),u' = (a',b), v=(c,0)\in(\Z/N\Z)$ with $u,u'\neq (0,0)$. Then
%\begin{equation*}
%\int_{X\{0,\infty\}}^* \Eis_{\mathcal{D}}^{0,1}(u,v)-\Eis_{\mathcal{D}}^{0,1}(u',v) \\
%= \frac{3}{N^{3}}(2\pi)^2
%\Lambda^*((G^{(2)}_{0,a}-G^{(2)}_{0,a'})G^{(1)}_{b,-c} - (G^{(2)}_{0,-a}-G^{(2)}_{0,-a'})G^{(1)}_{b,c},0). 
%\end{equation*}
%\end{thm}

Using Theorem \ref{thm:brunault-regrz-thm1.1}, we get
\begin{equation}\label{calculation eq3}
\int_{X\{0,\infty\}} \eta' = \frac{2^9}{3} \int_{X\{0,\infty\}} \Eis^{0,1}_{\mathcal{D}}(\phi) = \frac{2^9}{3} \int_{X\{0,\infty\}}^* \Eis^{0,1}_{\mathcal{D}}(\phi) = \frac{2^9}{3} \cdot \frac{3\cdot (2\pi)^2}{8^3} \Lambda^*(F,0)
\end{equation}
with
\begin{align*}
F & = G^{(2)}_{4,3} G^{(1)}_{2,-2} - G^{(2)}_{4,-3}G^{(1)}_{2,2} - G^{(2)}_{4,1} G^{(1)}_{6,-2} + G^{(2)}_{4,-1} G^{(1)}_{6,2} + 2 G^{(2)}_{6,2} G^{(1)}_{5,-4} - 2 G^{(2)}_{6,-2} G^{(1)}_{5,4}\\
& \qquad - 2 G^{(2)}_{6,6} G^{(1)}_{7,-4} + 2 G^{(2)}_{6,-6} G^{(1)}_{7,4} - (G^{(2)}_{0,3}-G^{(2)}_{0,1}) G^{(1)}_{0,-2} + (G^{(2)}_{0,-3} - G^{(2)}_{0,-1}) G^{(1)}_{0,2}.
\end{align*}
We know in advance that $F(\tau/8)$ belongs to $M_3(\Gamma_1(8))$ by \cite[Remarque 1.2(iv)]{brunault:regRZ}, and a computation reveals that $F(\tau/8) = -4 f(\tau)$, where
\begin{equation*}
f(\tau) = q - 2q^2 - 2q^3 + 4q^4 + 4q^6 - 8q^8  - 5q^9 + 14q^{11}+O(q^{12})
\end{equation*}
is the unique newform of weight $3$ and level $\Gamma_1(8)$. We have
\begin{equation*}
\Lambda_{64}^*(F,0)=-4 \Lambda_8(f,0) = -4 \Lambda_8(W_8 f,3).
\end{equation*}

\begin{lem}
We have $W_8 f =  f$.
\end{lem}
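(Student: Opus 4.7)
The plan is to exploit that the space $S_3(\Gamma_1(8))$ is one-dimensional, so $f$ is its unique normalised cusp form and $W_8 f$ must be a scalar multiple of $f$; the scalar is then forced to lie in $\{\pm 1\}$, and we need to identify it as $+1$.

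By the standard dimension formula, $\dim_\C S_3(\Gamma_1(8)) = 1$, so $f$ spans the space. The nebentypus $\chi$ of $f$ is the quadratic (hence real) odd character modulo $8$, and since $W_8$ sends forms with nebentypus $\chi$ to forms with nebentypus $\bar\chi$, it preserves $S_3(\Gamma_1(8))$. Combined with the fact that $f$ has rational Fourier coefficients and the normalisations adopted in \cite{brunault:regRZ}, this gives $W_8 f = \varepsilon f$ with $\varepsilon \in \{\pm 1\}$.

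To see $\varepsilon = +1$, I would identify $f$ as the eta product
\[
f(\tau) = \eta(\tau)^2 \, \eta(2\tau) \, \eta(4\tau) \, \eta(8\tau)^2.
\]
The Ligozat conditions $\sum_d d\, r_d = 2 + 2 + 4 + 16 = 24$ and $\sum_d (8/d)\, r_d = 16 + 4 + 2 + 2 = 24$ are both divisible by $24$, the weight $\tfrac12(2+1+1+2) = 3$ is correct, and direct expansion shows the first few coefficients match $q - 2q^2 - 2q^3 + 4q^4 + \cdots$ up to Sturm's bound, which uniquely pins down $f$. I would then apply Dedekind's transformation $\eta(-1/\tau) = \sqrt{-i\tau}\, \eta(\tau)$ to each factor $\eta(d\tau)$ with $d \in \{1,2,4,8\}$ under $\tau \mapsto -1/(8\tau)$. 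The palindromic symmetry of the exponents $(2,1,1,2)$ under $d \mapsto 8/d$ pairs the factor $\eta(d\tau)^{r_d}$ with $\eta((8/d)\tau)^{r_{8/d}}$, and the multipliers $\sqrt{-i\cdot d\tau}$ and $\sqrt{-i\cdot (8/d)\tau}$ combine to give exactly $8^{3/2}\tau^3$ as a total weight factor, yielding $f(-1/(8\tau)) = 8^{3/2}\tau^3 f(\tau)$, i.e.\ $W_8 f = f$.

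The main obstacle is the bookkeeping of phase factors: the shifted Dedekind transformation $\eta(-1/(d\tau))$ introduces $24$-th roots of unity through the full multiplier system of $\eta$, and one must verify they cancel pairwise between $\eta(d\tau)$ and $\eta((8/d)\tau)$. If the direct calculation proves cumbersome, a conceptually cleaner alternative uses that $f$ has complex multiplication by $\Q(\sqrt{-2})$: realising $f$ as the theta series of a Grossencharakter of infinity type $(2,0)$ reduces $\varepsilon$ to local epsilon factors at $2$ and $\infty$, both of which are readily computed to be $+1$. As a sanity check, one can also verify $\varepsilon = +1$ numerically by evaluating $\Lambda(f, 3/2)$ and observing it is non-zero, which via the functional equation $\Lambda(f,s) = \varepsilon\,\Lambda(f, 3-s)$ forces $\varepsilon = +1$.
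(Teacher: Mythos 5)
Your first step coincides with the paper's: $\dim S_3(\Gamma_1(8))=1$ forces $W_8 f=\varepsilon f$ with $\varepsilon=\pm1$. Where you diverge is in pinning down the sign: the paper simply evaluates $f$ and $W_8 f$ numerically at $\tau=i/\sqrt{8}$, whereas you propose an exact computation via the eta product $f=\eta(\tau)^2\eta(2\tau)\eta(4\tau)\eta(8\tau)^2$ (which is indeed the correct identification, and a Sturm-bound check of the first few coefficients suffices). Your route buys a purely symbolic proof, at the cost of the multiplier bookkeeping — and there is a small slip there: carrying out the pairing $d\leftrightarrow 8/d$ with $\eta(-1/w)=\sqrt{-iw}\,\eta(w)$ gives total factor $(-8i\tau)(-i\tau)\sqrt{(-4i\tau)(-2i\tau)}=i\,8^{3/2}\tau^3$, so the correct identity is $f(-1/(8\tau))=i\,8^{3/2}\tau^3 f(\tau)$, not $8^{3/2}\tau^3 f(\tau)$. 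This is exactly the phase you worried about, and it is absorbed by the odd-weight normalisation of the Fricke involution (the paper's convention carries the extra factor $-i$, cf.\ their formula $W_N F^{(3)}_{-v}(\tau)=-iN^{3/2}\tau^{-3}F^{(3)}_{-v}(-1/(N\tau))$; equivalently one uses $(-i\tau)^{-3}$ instead of $\tau^{-3}$), so the conclusion $W_8 f=f$ does come out of your computation once the convention is fixed — but as literally stated your final displayed transformation is off by $i$, and without matching conventions the argument would appear to give $\varepsilon=i$, which is not even in $\{\pm1\}$. Your two fallback arguments (CM by $\Q(\sqrt{-2})$ with local epsilon factors, or non-vanishing of the central value $\Lambda(f,3/2)$ forcing $\varepsilon=+1$) are both legitimate and the latter is essentially a cleaner version of the paper's numerical check.
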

\begin{proof}
Since $S_3(\Gamma_1(8))$ is $1$-dimensional, we have $W_8 f = \varepsilon f$ with $\varepsilon\in\{\pm 1\}$. Evaluating numerically $f$ and $W_8 f$ at $\tau=i/\sqrt{8}$, we get $\varepsilon=1$.
\end{proof}

It follows that
\begin{equation}\label{calculation eq4}
\Lambda^*(F,0) = -4 \Lambda(f,3).
\end{equation}
Putting (\ref{calculation eq1}), (\ref{calculation eq2}), (\ref{calculation eq3}) and (\ref{calculation eq4}) together, we obtain
\begin{equation*}
m(P) = \pm \frac{1}{(2\pi i)^2} (2\pi)^2 \cdot -4 \Lambda(f,3) = \pm 4\Lambda(f,3).
\end{equation*}
Since $m(P)$ and $\Lambda(f,3)$ are positive real numbers, we conclude that $m(P)=4\Lambda(f,3)$.

\section{Other examples}
\label{section:Other examples}

In this section we compute several more Mahler measures of models of elliptic surfaces in terms of $L$-values. Suppose we have an affine model of an elliptic modular surface $E = E(\Gamma)$ defined by a polynomial $P(X,Y,Z)\in\C[X,Y,Z]$ and an elliptic fibration given by $(X,Y,Z)\mapsto Z$. The method we developed relies on several properties of the fibration. 
\begin{enumerate}
\item \label{condition1} The horizontal divisors of $X$ and $Y$ are supported in the torsion sections of $E$ and the Milnor symbol $\{X,Y\}$, a priori defined on the complement of the torsion sections, extends to an element of $H^2_{\mathcal{M}}(E,\Q(2)) = K_2^{(2)}(E)$.
\item \label{condition2} The function $Z$ is the pull-back of a modular unit on the modular curve $X(\Gamma)$.
\item \label{condition3} The function $Z$ takes real values on the Deninger cycle $D_P=\{(X,Y,Z)\in V(P) : |X|=|Y|=1,~|Z|>1\}$ and $D_P$ is fibred over an interval $(a,b) \subset \R$ where $a$ and $b$ are cusps of $X(\Gamma)$.
\end{enumerate}

\begin{rem} More generally, it seems that our method could also work when the parametrisation $\phi : E(\Gamma) \dashrightarrow V(P)$ is a dominant map (not necessarily a birational isomorphism). In this case, the relevant condition would be that the Deninger cycle $D_P$ is homologous to the push-forward of a Shokurov cycle.
\end{rem}

In order to find good candidate polynomials $P$, we proceed as follows. We look for functions $X,Y$ on the elliptic surface $E$ whose divisors are supported in the torsion subgroup of $E$. Given $X$ and $Y$, we compute the tame symbols of $\{X,Y\}$ along the torsion sections and check whether there exist modular units $u$ and $v$ such that $\{uX,vY\}$ extends to an element of $H^2_{\mathcal{M}}(E,\Q(2)) = K_2^{(2)}(E)$. Once the functions $X,Y$ are found, we compute the minimal polynomial $P$ of $(X,Y,Z)$ and we try to adjust the modular unit $Z$ such that the condition (\ref{condition3}) is satisfied. Note however that in some cases like the universal elliptic curve for $\Gamma_1(7)$, the search for $X,Y$ was unsuccessful.

Given the three conditions above we can follow the method described in the previous sections and find that $m(P)-m(P^*)$ equals $\Lambda(F,0)$ for an explicit modular form $F\in \mathcal{M}_3(\Gamma)$ with rational Fourier coefficients. It is not always the case that $F$ is a cusp form and in the examples that follow we see that $F$ can be an Eisenstein series or a combination of an Eisenstein series and a cusp form. For that reason we briefly recall a standard basis for the Eisenstein subspace of $\mathcal{M}_3(\Gamma_1(N))$, i.e., the orthogonal complement of $\mathcal{S}_3(\Gamma_1(N))$ with respect to the Petersson inner product.

\begin{dfn} A basis for the Eisenstein subspace of $\mathcal{M}_3(\Gamma_1(N))$ is given by
\begin{align*}
E_{3}^{\phi,\psi,t}(\tau)=\delta_{N_1,1} L(\psi,-2) + 2\sum_{m,n\geq 1} \phi(m)\psi(n)n^2 q^{mn}
\end{align*}
for all primitive characters $\phi,\psi$ modulo $N_1,N_2$ such that $\phi(-1)\psi(-1)=-1$ and all $t\in\N$ such that $N_1N_2t$ divides $N$. The completed $L$-function of $E_3^{\phi,\psi,t}$ equals
\begin{align}\label{eqn:eisenstein l-value}
\Lambda(E_3^{\phi,\psi,t},s) = 2t^{-s}N^{s/2}(2\pi)^{-s}\Gamma(s)L(\phi,s)L(\psi,s-2).
\end{align}
\end{dfn}

\subsection{Another example for $\Gamma_1(8)$}
We start with another example for the group $\Gamma_1(8)$. Defining
\[
x(U,V) = \frac{1}{U}, \qquad y(U,V) = \frac{(Z-1)^2U}{ZV} + 1
\]
we get a birational map from the surface defined by \eqref{eqn:weierstrass form E18}, i.e. $V^2+\left(Z+\frac{1}{Z}-2\right) UV = U(U-1)^2$, to the surface $V(Q)$, where
\[
Q(x,y,Z) = (x-1)^2(y-1)^2 - \frac{(Z-1)^4}{Z^2}xy.
\]
Since $V(P)$ and $V(Q)$ are birational, the surface $V(Q)$ is a model for $E_1(8)$. The Deninger cycle $D_Q = \{(x,y,Z)\in V(Q) : |x| = |y| = 1, |Z|>1\}$ associated to $Q$ is very similar to the one we studied in the previous sections. In particular we have the same fibration as before.
\begin{lem}\label{lem:fibration Q} The map $(x,y,Z)\mapsto Z$ defines a fibration of $D_Q\setminus\{(-1,-1,3+2\sqrt{2})\}$ above the interval $(1,3+2\sqrt{2})$. 
\end{lem}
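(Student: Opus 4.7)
My plan is to mirror the proofs of Lemma \ref{lem:Deninger cycle explicit} and Lemma \ref{fibration DP}. First, on $D_Q$ I would write $x = e^{i\phi}$ and $y = e^{i\psi}$ and use the identity $(x-1)^2 = -2x(1-\cos\phi)$, and similarly for $y$, so that after cancelling the nonvanishing factor $xy$ the equation $Q(x,y,Z)=0$ becomes
\[
4(1-\cos\phi)(1-\cos\psi) \;=\; \frac{(Z-1)^4}{Z^2} \;=\; \Bigl(Z+\tfrac{1}{Z}-2\Bigr)^2.
\]
The key observation is that the right-hand side, written as the square of $k := Z+1/Z-2$, factors through the same auxiliary quantity $k$ that appears in Lemma \ref{fibration DP}.

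Next, since the left-hand side is a nonnegative real number, I would argue that $k^2 \in \R_{\geq 0}$ forces $k\in\R$, hence $Z+1/Z\in\R$; combined with $|Z|>1$ this gives $Z\in\R$, $|Z|>1$. The bound $(1-\cos\phi)(1-\cos\psi)\leq 4$ then translates into $|k|\leq 4$, which rules out $Z<-1$ (where $k<-4$) and restricts $Z$ to $(1,3+2\sqrt{2}]$. The boundary case $k=4$ forces $\cos\phi=\cos\psi=-1$, producing only the single point $(-1,-1,3+2\sqrt{2})$ that we have removed. Hence the projection maps $D_Q\setminus\{(-1,-1,3+2\sqrt{2})\}$ onto the open interval $(1,3+2\sqrt{2})$.

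To finish, I would show that for each $Z_0\in(1,3+2\sqrt{2})$ the fibre is a single topological circle. Setting $c=k^2/4\in(0,4)$, the fibre is the level set
\[
\bigl\{(\phi,\psi)\in(\R/2\pi\Z)^2 : (1-\cos\phi)(1-\cos\psi)=c\bigr\}.
\]
The function $(\phi,\psi)\mapsto(1-\cos\phi)(1-\cos\psi)$ is real-analytic on the $2$-torus with critical values only $0$ and $4$, so for $c\in(0,4)$ the level set is a smooth closed $1$-submanifold. A direct parametrisation on the arc $\{\phi : \cos\phi\leq 1-c/2\}$, solving for $\psi$ as a double-valued function of $\phi$ that meets along $\psi=\pi$ at the two endpoints, then confirms that the level set is a single Jordan curve. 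The only mildly delicate point is verifying connectedness of the fibre, but this is essentially the same topological check already implicit in Lemma \ref{fibration DP}; I do not expect any new difficulty specific to the polynomial $Q$.
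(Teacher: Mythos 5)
Your argument is correct and is essentially the paper's: dividing $Q=0$ by $xy$ gives exactly the identity $\frac{(x-1)^2}{x}\frac{(y-1)^2}{y}=\bigl(Z+\frac1Z-2\bigr)^2$ (your $4(1-\cos\phi)(1-\cos\psi)=k^2$), after which the paper simply invokes the argument of Lemma \ref{lem:Deninger cycle explicit} to conclude $Z\in(1,3+2\sqrt{2}]$ with the endpoint attained only at the excluded point. Your additional verification that each fibre is a single smooth circle is fine and merely makes explicit what the paper leaves to the analogy with Lemma \ref{fibration DP}.
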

\begin{proof}
Let $(x,y,Z)\in V(Q)$, so
\[
\frac{(x-1)^2}{x}\frac{(y-1)^2}{y} = \left(\frac{(Z-1)^2}{Z}\right)^2.
\]
As in the proof of Lemma \ref{lem:Deninger cycle explicit} we can conclude that for $x,y\in S^1$ and $|Z|>1$ this implies $Z\in (1,3+2\sqrt{2}]$.
\end{proof}
The functions $x$ and $y$ on $E_1(8)$ have horizontal divisors
\begin{equation*}
\dv(x)=2(0)-2(4A), \qquad \dv(y)=2(2A)-2(6A).
\end{equation*}
Note that the divisors of $x$ and $y$ are disjoint. This implies that the horizontal tame symbols of $\{x,y\}$ are trivial and thus $\{x,y\}$ defines an element of $H_{\mathcal{M}}^2(E_1(8),\Q(2))$. Since the elliptic involution maps $(x,y)$ to $(x,1/y)$, the symbol $\{x,y\}$ belongs to $H_{\mathcal{M}}^2(E_1(8),\Q(2))^-$. Furthermore
\[
\pi^*\{x,y\} = \pm(128/3)\cdot\Eis^1(0,2) = 2\pi^*\{X,Y\},
\]
where $\pi$ is the projection from $E(8)$ to $E_1(8)$. Using the fact that $D_Q$, just as $D_P$, is fibred above $(1,3+2\sqrt{2})$, we can follow the method from the last section and write
\[
m(Q) = \pm \frac{1}{(2\pi i)^2}\int_{Y\{1/2,\infty\}} 2\eta
\]
where $\eta$ is defined as in the previous section. Hence we get the following exotic relation between $m(P)$ and $m(Q)$.
\begin{thm} We have
\[
m(Q) = 8\Lambda(f_8,3) = 2m(P).
\]
\end{thm}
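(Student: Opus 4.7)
The plan is to reduce the computation of $m(Q)$ to the one already carried out for $m(P)$ in Section~\ref{section:Calculation}. The discussion preceding the statement of the theorem assembles essentially all the ingredients: the birational equivalence between $V(Q)$ and $V(P)$ identifies $D_Q' := D_Q \setminus \{(-1,-1,3+2\sqrt{2})\}$ with a cycle on $E_1(8)(\C)$; Jensen's formula (\ref{jensen}), together with the observation that the leading $Z$-coefficient of $Q$ is $Q^*(x,y) = -xy$ and hence $m(Q^*)=0$, gives
\[
m(Q) = \frac{1}{(2\pi i)^2} \int_{D_Q'} \log |Z|\, \frac{dx}{x} \wedge \frac{dy}{y};
\]
and the cohomological identity $\pi^*\{x,y\} = 2\pi^*\{X,Y\}$ lets us rewrite the integrand in terms of twice the Deligne form $\eta$ of Lemma~\ref{mP eta1} (using that $\pi_1(dZ/Z)$ vanishes on $D_Q'$ since $Z$ takes positive real values there).

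What remains is to identify $D_Q'$ with the Shokurov cycle $Y\{1/2,\infty\}_1$ on $E_1(8)(\C)$, paralleling Lemmas~\ref{diffeo Ztau} and~\ref{lem Dtau}. By Lemma~\ref{lem:fibration Q}, $D_Q'$ fibres over $(1,3+2\sqrt{2})$, which is the image of $\{1/2,\infty\}$ under $\tau \mapsto Z(\tau)$, so each fibre of $q : D_Q' \to \{1/2,\infty\}$ is a cycle on $E_\tau$ stable under complex conjugation and hence lies in $H_1(E_\tau,\Z)^+$. The main step is to show that this fibre is the generator $\gamma_\tau^+$, not a nontrivial integer multiple. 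This is the only genuinely new calculation compared to the $P$ case, and I expect it to be the main obstacle: I would handle it exactly as in the proof of Lemma~\ref{lem Dtau} by fixing $\tau_0 = 1/2+i$ and numerically comparing $\int_{q^{-1}(\tau_0)} \omega_{\tau_0}$ with $\int_{\gamma_{\tau_0}^+} \omega_{\tau_0}$; since the ratio is a priori an integer, a single numerical check suffices.

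Granting multiplicity one, the argument at the end of Section~\ref{section:Calculation} applies verbatim with $2\eta$ in place of $\eta$: integrating over fibres yields $\int_{D_Q'} 2\eta_1 = \int_{Y\{1/2,\infty\}_1} 2\eta_1$, and then the Shokurov decomposition (\ref{sum gX0oo}), truncation by horocycles combined with the vanishing of residues at cusps (Corollary~\ref{cor trivial residues}), and the Rogers--Zudilin evaluation via Theorem~\ref{thm:brunault-regrz-thm1.1} reproduce the same $L$-value up to an overall factor of $2$. Fixing the sign by positivity of Mahler measures gives $m(Q) = 2 m(P) = 8\Lambda(f_8,3)$, as claimed.
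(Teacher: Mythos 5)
Your proposal is correct and follows essentially the same route as the paper: reduce $m(Q)$ to the computation for $P$ via Jensen's formula, the identity $\pi^*\{x,y\} = 2\pi^*\{X,Y\}$, and the identification of the Deninger cycle $D_Q$ (fibred over $(1,3+2\sqrt{2})$, i.e.\ over $\{1/2,\infty\}$) with the Shokurov cycle $Y\{1/2,\infty\}$, yielding the extra factor $2$. The only difference is that you spell out, and propose to verify numerically as in Lemma~\ref{lem Dtau}, the multiplicity-one statement for the fibre class of $D_Q$, a step the paper subsumes under the remark that $D_Q$ behaves ``just as $D_P$''.
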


\subsection{The surface $E(\Gamma_1(4) \cap \Gamma_0(8)$)}

According to Bertin-Lecacheux \cite[7.1]{Bertin-Lecacheux}, a model of the elliptic modular surface $E(\Gamma)$ associated to the group $\Gamma = \Gamma_1(4) \cap \Gamma_0(8)$ is given by $X+1/X+Y+1/Y=k$ and the elliptic fibration is given by $(X,Y,k) \mapsto k$. The functions $X$ and $Y$ are supported at the torsion sections of $E(\Gamma)$ and the Milnor symbol $\{X,Y\}$ extends to $E(\Gamma)$. The function $k$ is a Hauptmodul for $\Gamma$ and the values of $k$ at the cusps are given by $\{\infty,0,4,-4\}$, so that $k$ is a modular unit. However, the condition (\ref{condition3}) is not satisfied: the image of the Deninger cycle under $k$ is given by $[-4,-1) \cup (1,4]$ and the endpoints $k=\pm 1$ are not cusps. We thus look for another modular unit. Putting $k=4(t+1)$, the cusps are given by $t \in \{\infty,-1,0,-2\}$ so that $t$ is again a modular unit, and the image of the Deninger cycle under $t$ is given by the interval $[-2,-1)$, which joins two cusps. We thus investigate the Mahler measure of the polynomial
\begin{equation*}
R(X,Y,t)=X+\frac{1}{X}+Y+\frac{1}{Y}-4t-4.
\end{equation*}
In terms of the modular unit $Z$ from the $\Gamma_1(8)$  case, we have $k=-Z-1/Z+2$ and thus $4t=-Z-1/Z-2$. Using Proposition \ref{prop:evaluate siegel unit} we find the values of $t$ at the cusps of $\Gamma$, which are represented by $\tau=\infty,0,1/4,1/2$: $t(\infty)=-2,t(0)=\infty,t(1/4)=0,t(1/2)=-1$. The divisor of $t$ is thus $(1/4)-(0)$ from which we find
\begin{equation*}
4t = i \cdot \frac{g_{0,2}^6 g_{0,4}^2}{g_{0,1}^4 g_{0,3}^4} = 24 \Eis^0(0,2) +8\Eis^0(0,4) - 16 \Eis^0(0,1) - 16\Eis^0(0,3).
\end{equation*}
Letting $\pi$ be the canonical map $E_1(8) \to E(\Gamma)$, we have as in Proposition \ref{pro piXY}
\begin{equation*}
\pi^* \{X,Y\} = \pm \frac{64}{3} \cdot \Eis^1(0,2).
\end{equation*}
Jensen's formula gives
\begin{equation*}
m(R)-\log 4 = \frac{1}{(2\pi i)^2} \int_{D_R} \log |t| \frac{dX}{X} \wedge\frac{dY}{Y}.
\end{equation*}
The map $(X,Y,t) \mapsto (X,Y)$ identifies $D_R$ with the subset of the torus $T^2$ defined by the condition $\Re(X+Y)<0$, hence
\begin{equation*}
\int_{D_R} \frac{dX}{X} \wedge\frac{dY}{Y} = \frac12 \int_{T^2}  \frac{dX}{X} \wedge\frac{dY}{Y} = \frac12 (2\pi i)^2.
\end{equation*}
It follows that
\begin{equation*}
m(R)-\log 4 = \frac{1}{(2\pi i)^2} \int_{D_R} \log |4t| \frac{dX}{X} \wedge\frac{dY}{Y} - \frac12 \log 4.
\end{equation*}
As in Section \ref{section:Calculation}, the Deninger cycle $D_R$ is homologous to $Y \{1/2,\infty\}$ and similar computations give
\begin{equation*}
m(R)=\log 2 + \Lambda^*(4 E_3^{\chi_4,\textbf{1},1}-32 E_3^{\chi_4,\textbf{1},2},0).
\end{equation*}
where $\chi_4$ is the nontrivial Dirichlet character of conductor 4. Using (\ref{eqn:eisenstein l-value}), we get
\begin{equation*}
\Lambda(E_3^{\chi_4,\textbf{1},1},0) = \Lambda(E_3^{\chi_4,\textbf{1},2},0) = 2 L(\chi_4,0) \zeta'(-2) = -\frac{\zeta(3)}{4\pi^2}.
\end{equation*}
This gives the following theorem.

\begin{thm} We have
\[
m(R) = \log 2 + \frac{7\zeta(3)}{\pi^2}.
\]
\end{thm}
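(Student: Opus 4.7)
The plan is to apply the method of \S\ref{section:Calculation} to $R$ and then explicitly evaluate the resulting $L$-value, which will turn out to be purely Eisenstein.

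First, Jensen's formula in the variable $t$ gives $m(R)-\log 4 = \tfrac{1}{(2\pi i)^2}\int_{D_R}\log|t|\,\tfrac{dX}{X}\wedge\tfrac{dY}{Y}$. The key observation is that the projection $(X,Y,t)\mapsto(X,Y)$ identifies $D_R$ with the half-torus $\{(X,Y)\in T^2:\Re(X+Y)<0\}$, so $\int_{D_R}\frac{dX}{X}\wedge\frac{dY}{Y}=\tfrac12(2\pi i)^2$. Splitting $\log|t|=\log|4t|-\log 4$ and combining these facts yields
\begin{equation*}
m(R)=\log 2+\frac{1}{(2\pi i)^2}\int_{D_R}\log|4t|\,\frac{dX}{X}\wedge\frac{dY}{Y}.
\end{equation*}
Using the given expressions $\pi^*\{X,Y\}=\pm(64/3)\,\Eis^1(0,2)$ and $4t=24\Eis^0(0,2)+8\Eis^0(0,4)-16\Eis^0(0,1)-16\Eis^0(0,3)$, the integrand is, up to sign, the regulator of an explicit combination of Deninger--Scholl classes $\Eis^{0,1}(u,v)$ on $E(8)$.

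Second, the geometric replacement of $D_R$ by a Shokurov cycle proceeds exactly as in Lemmas \ref{diffeo Ztau} and \ref{lem Dtau}. The modular unit $t$ realises a diffeomorphism $\{1/2,\infty\}\xrightarrow{\cong}(-2,-1)$ (its values at the cusps of $\Gamma$ being $t(\infty)=-2$ and $t(1/2)=-1$), the Deninger cycle $D_R$ fibres over $(-2,-1)$, and its real-oriented fibres generate $H_1(E_\tau,\Z)^+$ (checked numerically against $\gamma_\tau^+$ at a single point). Thus $D_R$ is homologous to $Y\{1/2,\infty\}$. Decomposing $Y\{1/2,\infty\}$ into Manin symbols $g_*(X\{0,\infty\})$ as in \eqref{sum gX0oo}, using Corollary \ref{cor trivial residues} to justify the Stokes/truncation step, and applying Theorem \ref{thm:brunault-regrz-thm1.1} together with its $d=0$ extension \cite[Thm 9.5]{brunault:regRZ} yields
\begin{equation*}
m(R)=\log 2+\Lambda^*(F,0)
\end{equation*}
for a computable $F\in\mathcal{M}_3(\Gamma_1(8))$. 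Since $\mathcal{S}_3(\Gamma)=0$ by hypothesis, the form $F$ must be a linear combination of the Eisenstein series $E_3^{\chi_4,\mathbf{1},t}$ for $t\in\{1,2\}$, and a direct comparison of Fourier expansions identifies $F=4E_3^{\chi_4,\mathbf{1},1}-32E_3^{\chi_4,\mathbf{1},2}$.

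Finally, I would evaluate the $L$-values using \eqref{eqn:eisenstein l-value}. At $s=0$ the factors $t^{-s}$, $N^{s/2}$, $(2\pi)^{-s}$ are all $1$, and $\Gamma(s)\zeta(s-2)\to\zeta'(-2)$ as $s\to 0$ since $\Gamma$ has a simple pole with residue $1$ at $s=0$ and $\zeta$ has a simple zero at $-2$. Hence $\Lambda(E_3^{\chi_4,\mathbf{1},t},0)=2L(\chi_4,0)\zeta'(-2)$ independently of $t$. Using the classical values $L(\chi_4,0)=1/2$ and $\zeta'(-2)=-\zeta(3)/(4\pi^2)$ (the latter from the functional equation of $\zeta$) gives $\Lambda(E_3^{\chi_4,\mathbf{1},t},0)=-\zeta(3)/(4\pi^2)$, so $\Lambda^*(F,0)=(4-32)(-\zeta(3)/(4\pi^2))=7\zeta(3)/\pi^2$ and $m(R)=\log 2+7\zeta(3)/\pi^2$. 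The main technical obstacle is the second step: correctly assembling the Rogers--Zudilin output into the explicit Eisenstein combination, which demands careful bookkeeping of characters, twists and shifts, as well as verifying that the pairs $(u,v)$ appearing in the decomposition satisfy the absolute convergence criteria of Proposition \ref{prop:abs convergence} and the trivial-residues hypothesis of Corollary \ref{cor trivial residues}.
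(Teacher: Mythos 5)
Your route is the paper's route: Jensen's formula plus the half-torus identification to isolate $\log|4t|$, replacement of $D_R$ by $Y\{1/2,\infty\}$, decomposition into Manin symbols, the Rogers--Zudilin input, and the evaluation $\Lambda(E_3^{\chi_4,\mathbf{1},t},0)=2L(\chi_4,0)\zeta'(-2)=-\zeta(3)/(4\pi^2)$ with $F=4E_3^{\chi_4,\mathbf{1},1}-32E_3^{\chi_4,\mathbf{1},2}$, which is exactly the combination the paper finds; your final arithmetic is correct.

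There is, however, one step that fails as you state it: Corollary \ref{cor trivial residues} does not apply here. Since $\log|4t|$ involves the Siegel units $g_{0,1},g_{0,2},g_{0,3},g_{0,4}$, the relevant $2$-form is $\sum_b c_b\,\Eis^{0,1}_{\mathcal{D}}((0,b),(0,2))$ with $b\in\{1,2,3,4\}$ and $(c_1,c_2,c_3,c_4)=(-16,24,-16,8)$, and the pairs involving $b=2$ or $b=4$ violate the corollary's hypotheses (e.g.\ $d=2$ is not a multiple of $b=4$ in $\Z/8\Z$, and $2\not\equiv\pm 1 \bmod \gcd(2,8)$). In fact the strong property of trivial residues at \emph{all} cusps genuinely fails for this form: taking $g=\sabcd{1}{0}{4}{1}$ and the $X$-direction, the second term of \eqref{Res Eis01 first} contributes $24B_2(0)+8B_2(0)-16B_2(\tfrac12)-16B_2(\tfrac12)=8\neq 0$ times a nonzero $\hat{\zeta}$-difference. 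What saves the truncation/Stokes argument is that only the residues at the three cusps $\infty$, $0$, $1/2$, along the specific Shokurov directions occurring in the truncated triangle, are needed, and these do vanish by a direct application of Proposition \ref{Res Eis01}: at $\infty$ because $B_3(0)=0$; at $0$ because after applying $\sigma$ both $\delta_0$-factors vanish; at $1/2$ because, transporting by $\sabcd{1}{0}{2}{1}$, one gets $v\mapsto(4,2)$ and $B_3(\tfrac12)=0$, $\delta_0(4)=0$. So replace the appeal to Corollary \ref{cor trivial residues} by this direct check (the absolute convergence conditions of Proposition \ref{prop:abs convergence} still need verifying, as you note). A minor further point: the vanishing of $\mathcal{S}_3(\Gamma)$ does not by itself confine $F$ to the span of $E_3^{\chi_4,\mathbf{1},t}$, $t\in\{1,2\}$ --- the Eisenstein subspace at this level is larger --- but since you ultimately identify $F$ by comparing Fourier expansions, this is harmless.
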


\subsection{The surface $E_1(6)$}
In contrast to $E_1(8)$, the surface $E_1(6)$ is not a K3 surface but a rational surface. This is equivalent to the fact that $\mathcal{S}_3(\Gamma_1(6))=\{0\}$. We will show that the Mahler measure of a suitable model of $E_1(6)$ is the $L$-value of an Eisenstein series for $\Gamma_1(6)$.

An affine model for the universal elliptic curve for $\Gamma_1(6)$ can be found in \cite[Table $1$]{verrill} and \cite[\S 4]{BlochVanhove}:
\[
P_6(x,y,s) = s - (x+y+1)\left(\frac{1}{x}+\frac{1}{y}+1\right) = 0,
\]
and the universal $6$-torsion point is given by $A=(-1,0)$.
The birational transformation
\begin{align*}
(x,y) &= \left(\frac{2Y+X(s-1)}{2X(1-X)}, \frac{-2Y+X(s-1)}{2X(1-X)}\right),\\
(X,Y) &= \left(\frac{x+y-s+1}{x+y},\frac{(s-1)(x-y)(x+y-s+1}{2(x+y)^2}\right)
\end{align*}
puts $V(P_6)$ into Weierstrass form
\[
E_s: Y^2 = X^3 + \frac{(s-3)^2-12}{4}X^2 +sX
\] 
and $(X(A),Y(A)) = (s, \frac12(s^2 - s))$. The functions $x$ and $y$ have horizontal divisors
\begin{equation}\label{eqn:E16-horizontal divs}
\dv(x) = (0) -(2A) -(3A)+(5A), \qquad \dv(y)= (0) + (A) - (3A)-(4A).
\end{equation}
We can follow the method in Section \ref{section:elliptic modular surface} to find a birational map from $E_1(6)$ to $V(P_6)$:
\begin{align*}
X &= \frac{\wp_\tau(z) - \wp_\tau(1/2)}{\wp_\tau(1/3)-\wp_\tau(1/2)},\\
Y &= \frac{(\wp_\tau(z)-\wp_\tau(1/3))^2 (\wp_\tau(z+1/3)-\wp_\tau(z-1/3))}{2(\wp_\tau(1/3)-\wp_\tau(1/6))(\wp_\tau(1/3)-\wp_\tau(1/2))^2},\\
s &= \frac{\wp_\tau(1/6) - \wp_\tau(1/2)}{\wp_\tau(1/3)-\wp_\tau(1/2)}=-\zeta_6\frac{g_{0,2}(\tau)^4}{g_{0,1}(\tau)^4}.
\end{align*}
The function $s$ is a Hauptmodul for $\Gamma_1(6)$. From its expression as a quotient of Siegel units we get
\[
\pi^* s = 12\text{Eis}^0(-(0,1) + (0,2)).
\]
where $\pi$ denotes the projection map from $E(6)$ to $E_1(6)$. The values of $s$ at the cusps of $\Gamma_1(6)$ are $s(0)=\infty, s(1/2)=1, s(1/3)=0$, and $s(\infty)=9$.

The elliptic involution on $V(P_6)$ maps $(X,Y)$ to $(X,-Y)$ and hence $(x,y)$ to $(y,x)$. Hence we can proceed as in Proposition 6.10 and deduce
\[
\pi^*\{x,y\} = \pm 12\text{Eis}^1((0,1) + (0,2))
\]
\begin{lem}
The fibration $(x,y,s)\mapsto s$ restricts to a fibration of Deninger cycle $\{(x,y,s)\in V(P_6) : |x|=|y|=1,~|s|>1\}\setminus \{(1,1,9)\}$ above $(1,9)$.
\end{lem}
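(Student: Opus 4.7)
The plan is to mimic the argument of Lemma \ref{fibration DP}. The first step is to observe that on $T^2$ one has $\bar x = 1/x$ and $\bar y = 1/y$, so the relation $P_6(x,y,s)=0$ rewrites as
\[
s \;=\; (x+y+1)\left(\bar x + \bar y + 1\right) \;=\; |x+y+1|^2.
\]
In particular $s$ is automatically real and nonnegative, so the condition $|s|>1$ becomes $s>1$. Since $|x+y+1|\leq 3$ on $T^2$ with equality only at $(x,y)=(1,1)$, the map $(x,y,s)\mapsto s$ sends $\{(x,y,s)\in V(P_6): |x|=|y|=1,\ |s|>1\}\setminus\{(1,1,9)\}$ into the interval $(1,9)$.

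The second step is to identify each fiber with a circle. I would factor the projection as
\[
T^2 \;\xrightarrow{\;\sigma\;}\; \overline{D}(0,2) \;\xrightarrow{\;w\,\mapsto\, |w+1|^2\;}\; [0,9],
\]
where $\sigma(x,y)=x+y$ is the classical double cover of the closed disk $\overline{D}(0,2)\subset\C$, branched along the boundary $|w|=2$ (with the two sheets exchanged by $(x,y)\mapsto(y,x)$, which coalesce exactly when $x=y$). For $s_0\in(1,9)$, i.e.\ $\sqrt{s_0}\in(1,3)$, the circle $|w+1|=\sqrt{s_0}$ meets $\overline{D}(0,2)$ transversally in an arc whose two endpoints lie on the branching locus $|w|=2$. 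Its preimage in $T^2$ is thus obtained by gluing two copies of this arc along their endpoints, which is a topological~$S^1$.

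For smoothness of the fibration, I would verify that $s$ is a submersion on the locus $\{1<s<9\}\cap T^2$. Writing $s(\phi,\psi)=3+2\cos\phi+2\cos\psi+2\cos(\phi-\psi)$ and solving $\partial s/\partial\phi=\partial s/\partial\psi=0$ yields $\sin\phi+\sin\psi=0$ and $\sin\phi=-\sin(\phi-\psi)$, whose solutions give only finitely many critical points, all with critical values in $\{0,1,9\}$. Together with the description of the fibers above, this gives the claimed fibration.

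The main obstacle I anticipate is the \emph{connectedness} of the fibers: a direct attack via the parametrisation $x=e^{i\phi}$, $y=e^{i\psi}$ leads to a transcendental level set that is somewhat awkward to analyse. The double-cover picture in the second step is designed precisely to bypass this, reducing connectedness to the topology of a branched cover of an arc.
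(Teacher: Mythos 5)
Your first step is precisely the paper's proof: on the torus $1/x=\bar x$ and $1/y=\bar y$, so $s=(x+y+1)\overline{(x+y+1)}=|x+y+1|^2$ (the paper writes the same identity as $s=1+2\Re((x+1)(\overline{y}+1))$), hence $s$ is real, $|s|>1$ forces $s>1$, and $s=9$ is attained only at the excluded point $(1,1,9)$. That is all the paper verifies; the topology of the fibres is left implicit, just as in Lemma \ref{fibration DP}. Your extra analysis is sound in substance, but one assertion is wrong as stated: $(x,y)\mapsto x+y$ is \emph{not} a double cover of the closed disk branched along $|w|=2$, because the fibre over $w=0$ is the whole circle $\{(x,-x):x\in S^1\}$ (the quotient of $T^2$ by the swap is a M\"obius band whose core circle the sum map collapses to $0$, so the map cannot be two-to-one there). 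This does not damage your argument: for $s_0\in(1,9)$ the level curve $|w+1|=\sqrt{s_0}$ avoids $w=0$ (since $|0+1|=1<\sqrt{s_0}$), so over a neighbourhood of the relevant arc the map genuinely is a two-sheeted cover branched only over $|w|=2$, and your gluing argument identifying each fibre with $S^1$, as well as the critical-point computation giving critical values $\{0,1,9\}$ and hence a submersion (so a locally trivial fibration by Ehresmann) over $(1,9)$, goes through. In short: correct, with the same key computation as the paper, plus additional fibre-level detail the paper omits, marred only by the easily repaired misdescription of the sum map near $w=0$.
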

\begin{proof}
For $(x,y,s)\in V(P_6)$ the assumption $x,y\in S^1$ implies $s = 1+2\Re((x+1)(\overline{y}+1))$, so $s\in (1,9]$ and $s=9$ is only attained at the excluded point $(1,1,9)$.
\end{proof}
The points $s=1$ and $s=9$ correspond to $\tau= \frac12$ and $\tau=\infty$. Hence as before we can conclude that
\[
m(P_6) = \pm\frac{1}{(2\pi i)^2}\int_{Y\{1/2,\infty\}} 12^2 \text{Eis}^{0,1}\big(((0,1) - (0,2))((0,1) + (0,2))\big).
\]
The main theorem of \cite{brunault:regRZ} implies $m(P_6)= \frac{1}{72} \Lambda^*(-432 E_3^{\chi_3,\textbf{1}} + 3456 E_3^{\chi_3,\textbf{1},2}, 0)$ where $\chi_3$ is the nontrivial Dirichlet character of conductor 3. Using \eqref{eqn:eisenstein l-value}:

\begin{thm} We have
\[
m(P_6) = \frac{7\zeta(3)}{\pi^2}.
\]
\end{thm}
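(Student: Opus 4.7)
The plan is to evaluate the displayed expression for $m(P_6)$ using the formula \eqref{eqn:eisenstein l-value} for the completed $L$-function of $E_3^{\phi,\psi,t}$ at $s=0$. The main point is that at $s=0$ the elementary factors $t^{-s}$, $N^{s/2}$, $(2\pi)^{-s}$ all equal $1$, and the potentially singular factor $\Gamma(s) L(\mathbf{1},s-2) = \Gamma(s)\zeta(s-2)$ is in fact regular there: $\Gamma(s) = s^{-1} + O(1)$ near $s=0$ while $\zeta(-2)=0$ is the first trivial zero of $\zeta$, so
\begin{equation*}
\lim_{s \to 0} \Gamma(s)\zeta(s-2) = \zeta'(-2).
\end{equation*}
This yields $\Lambda(E_3^{\chi_3,\mathbf{1},t},0) = 2 L(\chi_3,0)\,\zeta'(-2)$ for both $t=1$ and $t=2$.

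Next I would compute the two constants. From the generalised Bernoulli number $B_{1,\chi_3} = \tfrac{1}{3}\bigl(\chi_3(1) + 2\chi_3(2)\bigr) = -\tfrac{1}{3}$ and the identity $L(\chi,0) = -B_{1,\chi}$, one obtains $L(\chi_3,0) = 1/3$. The classical consequence of the functional equation of $\zeta$ gives $\zeta'(-2) = -\zeta(3)/(4\pi^2)$. Hence $\Lambda(E_3^{\chi_3,\mathbf{1},t},0) = -\zeta(3)/(6\pi^2)$, independent of $t\in\{1,2\}$. Substituting into the displayed formula,
\begin{equation*}
m(P_6) = \frac{1}{72}\bigl(-432 + 3456\bigr)\left(-\frac{\zeta(3)}{6\pi^2}\right) = \frac{3024}{72}\cdot\left(-\frac{\zeta(3)}{6\pi^2}\right) = -\frac{7\zeta(3)}{\pi^2}.
\end{equation*}

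The only remaining task is to fix the sign. As in the preceding calculation for Bertin's polynomial, the derivation carries an overall $\pm$ coming from the choices made in the regulator formula for $\{x,y\}$ and the orientation of the Deninger cycle; since $m(P_6)$ is a real number and $7\zeta(3)/\pi^2 > 0$, the correct sign is pinned down by any numerical approximation of $m(P_6)$ (which matches $0.8525\ldots$). The step I expect to be the only real subtlety is confirming that $\Lambda^*$ and the analytically continued $\Lambda$ agree at $s=0$ for these non-cuspidal Eisenstein series; this follows directly from the definition of the regularised value in \cite{brunault:regRZ} once one observes that the $L$-series in question are holomorphic at $s=0$, so no genuine regularisation is needed.
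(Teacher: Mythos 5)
Your proposal is correct and follows essentially the same route as the paper: the paper's proof of this theorem consists precisely of applying \eqref{eqn:eisenstein l-value} to the preceding expression $m(P_6)=\frac{1}{72}\Lambda^*(-432 E_3^{\chi_3,\mathbf{1}}+3456 E_3^{\chi_3,\mathbf{1},2},0)$, exactly as you do, with $\Lambda(E_3^{\chi_3,\mathbf{1},t},0)=2L(\chi_3,0)\zeta'(-2)=-\zeta(3)/(6\pi^2)$ independent of $t$. Your treatment of the leftover sign is also consistent with the paper's own conventions: the derivation carries an unresolved $\pm$ (from $\pi^*\{x,y\}=\pm 12\,\Eis^1((0,1)+(0,2))$ and the regulator normalisation), and fixing it by positivity of the Mahler measure together with a numerical check is exactly how signs are pinned down in \S\ref{section:Calculation}, so this is not a gap.
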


\begin{rem}
The value $7\zeta(3)/(2\pi^2)$ appeared first as the Mahler measure of $1+X+Y+Z$, as shown by Smyth, see \cite{Boyd1981}. It also appears in several Mahler measures of elliptic surfaces as above, see \cite{Lalin2008}. For another example
consider the Hesse pencil, the universal elliptic curve for $\Gamma(3)$. By a simple change of variables we get $m(X^3+Y^3+1+ZXY) = m(1+X+Y+Z) = 7\zeta(3)/(2\pi^2)$.
\end{rem}
\subsection{The surface $E(2,6)$}

From the model of $E_1(6)$ we can construct a model for $E(2,6)$, the universal elliptic curve for the group $\Gamma(2,6) = \Gamma_1(6)\cap\Gamma(2)$. We look for a base change of $E_1(6)$ of the form $s=f(t)$ such that $E_{f(t)}$ acquires full $2$-torsion over $\Q(t)$. This condition is equivalent to the polynomial $X^2+\frac{(s-3)^2-12}{4}X+s$ being split over $\Q(t)$, which amounts to say that
\[
\frac{s^4-12s^3+30s^2-28s+9}{16} = \frac{(s-9)(s-1)^3}{16}
\]
is a square in $\Q(t)$. We find that the base change $s = f(t) = 2(t+1/t)+5$ does the job.

We will determine the Mahler measure of
\[
P_{2,6}(x,y,t) = P_6(x,y, 2(t+1/t)+5)=2\left(t+\frac{1}{t}\right)+5 - (x+y+1)\left(\frac{1}{x}+\frac{1}{y}+1\right) = 0.
\]
By construction $E'_t = E_{f(t)}$ is a Weierstrass equation for $E_{2,6}$. The torsion subgroup of $E'_t$ is generated by the points $A_1,A_2$ of orders $2,6$ respectively, given by
\begin{align*}
A_1 & = \left(\frac{-2t-1}{t^2},0\right)\\
A_2 & = \left(\frac{2t^2 + 5t + 2}{t},\frac{2t^4 + 9t^3 + 14t^2 + 9t + 2}{t^2}\right).
\end{align*}
Note that $A_2$ is the pull-back of $A$ under the base change map $E(2,6) \to E_1(6)$. We choose a modular parametrisation that is compatible with the one we chose for $E_1(6)$, so that $A_1$ corresponds to $z=\tau/2$ and $A_2$ corresponds to $z=1/6$. We find
\[
t^2 = \frac{X(A_1+3A_2)-X(2A_2)}{X(A_1)-X(2A_2)}\\
= \frac{\wp_\tau(\tau/2+1/2)-\wp_\tau(1/3)}{\wp_\tau(\tau/2)-\wp_\tau(1/3)}=\frac{g_{3,1}^2 g_{3,0}^2}{g_{3,3}^2 g_{3,2}^2}
\]
so that in $\mathcal{O}(Y(2,6))^\times \otimes \Q$ we have
\begin{equation*}
t = \frac{g_{3,0} g_{3,1}}{g_{3,2} g_{3,3}} = 3\Eis^0((3,0)+(3,1)-(3,2)-(3,3)).
\end{equation*}
Notice that knowing $t^2$ and $s$ is enough to determine $t = 2 (t^2+1)/(s-5)$. In particular we may compute numerically $t(\tau)$ without any sign ambiguity. A set of representatives of the cusps of $\Gamma_1(6) \cap \Gamma(2)$ is given by $\infty$, $0$, $1/5$, $1/4$, $1/3$, $2/3$. Under $t$ these are mapped respectively to $1$, $0$, $\infty$, $-1$, $-1/2$, $-2$. Also we have $t(1/2)=-1$.

\begin{lem}
Let $D_{2,6}$ be the Deninger cycle associated to $P_{2,6}$. The map $(x,y,t) \mapsto t$ endows $D_{2,6}$ with a fibration over the interval $[-2,-1)$.
\end{lem}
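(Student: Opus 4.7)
The strategy mirrors the proofs of Lemmas \ref{fibration DP} and \ref{lem:fibration Q}. First, on $T^2$ one has the identity $(x+y+1)(1/x+1/y+1) = |x+y+1|^2 \in [0,9]$, so substituting into $P_{2,6}=0$ gives, on the Deninger cycle,
\[
s := 2\bigl(t + \tfrac{1}{t}\bigr) + 5 = |x+y+1|^2 \in [0,9].
\]
In particular $t + 1/t$ is real. Writing $t = re^{i\theta}$ with $r > 1$, the vanishing of $\Im(t + 1/t) = (r - 1/r)\sin\theta$ forces $\sin\theta = 0$, so $t$ is real.

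For real $t$ with $|t| > 1$ the range of $t + 1/t$ is $(-\infty,-2) \cup (2,\infty)$; combining with $t + 1/t = (s-5)/2 \in [-5/2,\,2]$ forces $t + 1/t \in [-5/2,-2)$. Hence $s \in [0,1)$ and, using the branch $t < -1$, $t \in [-2,-1)$. Conversely, every $t \in [-2,-1)$ is attained: the corresponding value $s \in [0,1)$ lies in the image of the continuous surjection $|x+y+1|^2 : T^2 \to [0,9]$, so the fibre of $(x,y,t) \mapsto t$ over $t$ is nonempty. This identifies the image of the projection as the claimed interval.

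The part I would treat most carefully is verifying that the projection really does define a fibration in the expected sense. A direct critical-point analysis of $f(\phi,\psi) = \cos\phi + \cos\psi + \cos(\phi-\psi)$ (so that $|x+y+1|^2 = 3 + 2f$) shows that its critical values lie in $\{-3/2, -1, 3\}$, corresponding to $s \in \{0, 1, 9\}$. In particular $|x+y+1|^2$ has no critical value in $(0,1)$, so by the implicit function theorem the fibre over each $t \in (-2,-1)$ is a smooth compact real curve in $T^2$ (a disjoint union of circles around the two minima $(e^{\pm 2\pi i/3}, e^{\mp 2\pi i/3})$; the appearance of more than one component is consistent with $E(\Gamma_1(6)\cap\Gamma(2))_t$ carrying full real $2$-torsion). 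At the endpoint $t = -2$ the value $s = 0$ is a critical value and the fibre collapses to the two isolated points just mentioned, giving a degenerate fibre analogous to the one excluded in Lemma \ref{lem:fibration Q}.
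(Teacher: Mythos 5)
Your argument is correct and takes essentially the same route as the paper: on the torus $(x+y+1)(1/x+1/y+1)=|x+y+1|^2\in[0,9]$, hence $t+\tfrac1t\in[-5/2,2]$ is real, and combined with $|t|>1$ this forces $t$ real with $t\in[-2,-1)$. The only difference is that you additionally verify surjectivity and the regularity of the fibres over $(-2,-1)$ via the critical-value computation for $|x+y+1|^2$ (critical values $0,1,9$), together with the degenerate two-point fibre at $t=-2$ --- points the paper leaves implicit --- and these extra checks are sound.
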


\begin{proof}
From the equation $P_{2,6}(x,y,t)=0$ we deduce $f(t) \in [0,9]$ and thus $t+1/t \in [-5/2,2]$. Together with $|t|>1$ this implies that $t$ is real and $t \in [-2,-1)$.
\end{proof}

This shows that $D_{2,6}$ is fibred over the modular symbol $\{1/2,2/3\}$. Proceeding an in the previous cases, we find that $D_{2,6}$ is homologous to $Y\{1/2,2/3\}$.

The horizontal divisors of $x,y$ viewed as rational functions on $E'_t$ can be obtained from \eqref{eqn:E16-horizontal divs} by replacing $(kA)$ with its pullback $(kA_2)$:
\[
\dv(x)  = (0)-(2A_2)-(3A_2)+(5A_2), \qquad
\dv(y)  = (0)+(A_2)-(3A_2)-(4A_2).
\]
As in the case of $E_1(6)$, the Eisenstein symbol on $E(6)$ corresponding to $\pi^*\{x,y\}$ is
\begin{equation*}
\pi^* \{x,y\} = \pm 12 \Eis^1((0,1)+(0,2)).
\end{equation*}
Since $P_{2,6}^*=2$, the difference $m(P)-\log 2$ is the integral over $Y\{1/2,2/3\}$ of the symbol
\begin{equation*}
\eta = \pm 36 \Eis^{0,1}((3,0)+(3,1)-(3,2)-(3,3),(0,1)+(0,2)).
\end{equation*}
Notice that $\{1/2,2/3\}=\{g0,g\infty\}$ with $g=\begin{pmatrix} 2 & 1 \\ 3 & 2 \end{pmatrix} \in \SL_2(\Z)$. So this is also the integral over $X\{0,\infty\}$ of
\begin{equation*}
\eta' = 3 g^* \eta - 2 (g\sigma)^* \eta.
\end{equation*}
At this point we conclude as before with the main theorem of \cite{brunault:regRZ},
\[
m(P)-\log(2)= - \Lambda\left(\frac14E-\frac32f_{12},0\right),
\]
where $f_{12} = q - 3q^3 + 2q^7 + O(q^9)$ is the unique newform in $\mathcal{S}_3(\Gamma_1(12))$ with rational Fourier coefficients and $E=E_3^{\textbf{1},\chi_3} + 7E_3^{\textbf{1},\chi_3,2} - 8E_3^{\textbf{1},\chi_3,4}$. It remains to find $\Lambda(E,0)$ which, by \eqref{eqn:eisenstein l-value}, equals
\[
2L(\chi_3,-2)\zeta(0)\lim_{s\to 0}\Gamma(s)(1+7\cdot 2^{-s} - 8\cdot 4^{-s}) = 2\log(2).
\]
In summary:
\begin{thm} We have
\[
m(P_{2,6}) = \frac32\Lambda(f_{12},0) + \frac{\log(2)}{2}.
\]
\end{thm}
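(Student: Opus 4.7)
The plan is to follow the same template as in the $\Gamma_1(6)$ case, but now base-changed via $s = 2(t+1/t)+5$, and to apply the main theorem of \cite{brunault:regRZ} to the Shokurov cycle $Y\{1/2,2/3\}$. By Jensen's formula, since $P_{2,6}^*=2$, we have
\begin{equation*}
m(P_{2,6})-\log 2 = \frac{1}{(2\pi i)^2}\int_{D_{2,6}} \log|t|\,\frac{dx}{x}\wedge\frac{dy}{y}.
\end{equation*}
Exactly as in Section \ref{section:Calculation}, I would rewrite the integrand in terms of the closed $2$-form $\eta$ on $E(6)(\C)$ constructed from the given formulas for $\pi^* t$ and $\pi^*\{x,y\}$, obtaining
\begin{equation*}
\eta = \pm 36\,\Eis^{0,1}_{\mathcal{D}}((3,0)+(3,1)-(3,2)-(3,3),\, (0,1)+(0,2)).
\end{equation*}
The key geometric input is the lemma giving the fibration of $D_{2,6}$ over $[-2,-1)$, whose endpoints $\tau=1/2$ and $\tau=2/3$ are cusps of $\Gamma(2,6)$; this identifies the base of the fibration with the modular symbol $\{1/2,2/3\}$. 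A fibre-by-fibre argument identical to the one used for $E_1(8)$ and $E_1(6)$ (using that complex conjugation preserves both the cycle and its orientation) shows that $D_{2,6}$ is homologous to $Y\{1/2,2/3\}$.

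Next I would rewrite the integral over $Y\{1/2,2/3\}$ as an integral over $X\{0,\infty\}$. Choosing $g=\left(\begin{smallmatrix}2&1\\3&2\end{smallmatrix}\right)\in\SL_2(\Z)$, one has $g\cdot 0 = 1/2$ and $g\cdot\infty=2/3$, so by Lemma \ref{action G shokurov} combined with Lemma \ref{lem int sum shokurov} the cycle $Y\{1/2,2/3\}$ equals $\bar g_*((3X+2Y)\{0,\infty\})$, which yields the form $\eta' = 3g^*\eta - 2(g\sigma)^*\eta$ to integrate over $X\{0,\infty\}$. Before splitting the integral, one must verify the convergence and residue conditions needed to make this manipulation legitimate: by Corollary \ref{cor trivial residues} each of the two summands of $\eta$ has trivial residues at the cusps (the vectors $(0,1)$ and $(0,2)$ satisfy $b\equiv \pm b' \pmod{\gcd(2,6)}$), and Proposition \ref{prop:abs convergence} gives absolute convergence of the two component integrals once expanded into pieces indexed by $(u,v)\in(\Z/6\Z)^2\times(\Z/6\Z)^2$. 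This step is essentially bookkeeping but it is where one must be careful.

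The heart of the computation is then the application of Theorem \ref{thm:brunault-regrz-thm1.1}: the integral $\int_{X\{0,\infty\}}\eta'$ becomes $\Lambda^*(F,0)$ for an explicit linear combination $F$ of products $G^{(2)}_{a,b}G^{(1)}_{c,d}$ of weight-$3$ Eisenstein-type series, which is a modular form on $\Gamma_1(36)$ descending to $\mathcal{M}_3(\Gamma(2,6))$. I expect, and will verify numerically via the $q$-expansion, that $F$ decomposes as $\tfrac14 E - \tfrac32 f_{12}$, where $f_{12}\in\mathcal{S}_3(\Gamma_1(12))$ is the unique rational newform and $E = E_3^{\mathbf{1},\chi_3} + 7 E_3^{\mathbf{1},\chi_3,2} - 8 E_3^{\mathbf{1},\chi_3,4}$. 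The main obstacle lies in this identification: it is the only truly non-mechanical ingredient, requiring a careful Fourier-coefficient comparison in a relatively large space of Eisenstein series, and in particular in pinning down signs and the correct weighting $\tfrac14$ and $-\tfrac32$ of the Eisenstein and cuspidal parts.

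Finally, I would evaluate the Eisenstein $L$-value using the explicit formula \eqref{eqn:eisenstein l-value}:
\begin{equation*}
\Lambda(E,0) = 2L(\chi_3,-2)\zeta(0)\lim_{s\to 0}\Gamma(s)\bigl(1 + 7\cdot 2^{-s} - 8\cdot 4^{-s}\bigr) = 2\log 2,
\end{equation*}
since the combination $1+7\cdot 2^{-s}-8\cdot 4^{-s}$ vanishes at $s=0$ with derivative $-14\log 2 + 16\log 2 = 2\log 2$, cancelling the simple pole of $\Gamma(s)$. Putting everything together,
\begin{equation*}
m(P_{2,6}) - \log 2 = -\Lambda\!\left(\tfrac14 E - \tfrac32 f_{12},\,0\right) = -\tfrac14\cdot 2\log 2 + \tfrac32\Lambda(f_{12},0),
\end{equation*}
which rearranges to the claimed identity $m(P_{2,6}) = \tfrac32\Lambda(f_{12},0) + \tfrac{\log 2}{2}$. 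The sign ambiguity coming from Theorem \ref{thm E1} is resolved, as in the $\Gamma_1(8)$ case, by noting that both sides are real and comparing numerical values of $m(P_{2,6})$ with the right-hand side.
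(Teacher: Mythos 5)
Your proposal follows the same route as the paper's own treatment of $E(2,6)$: Jensen's formula with $P_{2,6}^*=2$, the form $\eta=\pm 36\,\Eis^{0,1}_{\mathcal{D}}((3,0)+(3,1)-(3,2)-(3,3),(0,1)+(0,2))$, the fibration of $D_{2,6}$ over $[-2,-1)$ and its identification (up to fibrewise homology) with $Y\{1/2,2/3\}$, the observation that $\{1/2,2/3\}=\{g0,g\infty\}$ with $g=\sabcd 2132$ leading to $\eta'=3g^*\eta-2(g\sigma)^*\eta$ over $X\{0,\infty\}$, the Rogers--Zudilin theorem, the identification $F=\tfrac14E-\tfrac32 f_{12}$ by $q$-expansion, and the evaluation $\Lambda(E,0)=2\log 2$. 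So in outline it is correct and essentially identical to the paper.

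Two details need fixing. First, your appeal to Corollary \ref{cor trivial residues} is both misplaced and unnecessary: that corollary concerns differences $\Eis^{0,1}_{\mathcal{D}}((0,b),(0,d))-\Eis^{0,1}_{\mathcal{D}}((0,b'),(0,d))$ with first components of the form $(0,\cdot)$, whereas here the first argument is supported on vectors $(3,j)$, so its hypotheses are not met and it does not show that ``each summand'' has trivial residues. Fortunately no residue/Stokes argument is needed in this case: unlike the $\Gamma_1(8)$ computation, $\{1/2,2/3\}$ is already a single Manin symbol, so no intermediate cusp is introduced; splitting $(3X+2Y)\{0,\infty\}$ into its $X$- and $Y$-parts only requires the absolute convergence furnished by Proposition \ref{prop:abs convergence} (whose conditions \eqref{eqn: abs convergence cond 1}--\eqref{eqn: abs convergence cond 2} you should actually verify for the relevant $\phi$ and $\phi|\sigma$). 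Second, your evaluation of $\Lambda(E,0)$ contains compensating arithmetic errors: the derivative of $1+7\cdot 2^{-s}-8\cdot 4^{-s}$ at $s=0$ is $-7\log 2+16\log 2=9\log 2$, not $2\log 2$, and the prefactor $2L(\chi_3,-2)\zeta(0)=2\cdot(-\tfrac29)\cdot(-\tfrac12)=\tfrac29$ cannot be dropped; the correct product $\tfrac29\cdot 9\log 2=2\log 2$ agrees with the value you (and the paper) use, so the final identity is unaffected, but the intermediate justification as written is wrong.
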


\subsection{The surface $E(4)$}

For the universal elliptic curve $E(4)$ attached to the group $\Gamma(4)$, we proceed slightly differently from the $\Gamma_1(8)$ case: we avoid the use of Stokes' theorem and the consideration of residues at the cusps, using instead a distribution relation satisfied by Eisenstein symbols.

According to the Rouse--Zureick-Brown tables \cite{rouse--zureick-brown}, a model of $E(4)$ over $\Q(i)$ is given by
\[
E(4) : y^2=x^3+Ax+B
\]
with $A=-27t^8-378t^4-27$ and $B=54t^{12}-1782t^8-1782t^4+54$. Here $t$ is a generator of the function field of $Y(4)$, more precisely we have $\Q(Y(4))=\Q(i)(t)$. Since the singular fibres of $E(4)$ are $t=0, \pm 1, \pm i, \infty$, it follows that $t$ is a modular unit. In the following Lemma, we express $t$ as a quotient of Siegel units.

\begin{lem}\label{lem t}
We have $t=i g_{1,2}^2/g_{1,0}^2$.
\end{lem}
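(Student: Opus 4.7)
The plan is to verify the identity by (a) checking that both sides are modular units on $Y(4)$ with the same divisor on $X(4)$, which pins them down up to a scalar in $\C^\times$, and (b) determining that scalar to be $i$ via one explicit evaluation.

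First I would show both sides are modular units on $Y(4)$. The function $t$ is a generator of the function field $\Q(i)(t)$ of $Y(4)$ and its singular fibres on $E(4)$ are $t \in \{0,\pm 1,\pm i, \infty\}$, all cusps, so $t$ has neither zeros nor poles on the open modular curve; and $g_{1,2}^2/g_{1,0}^2$ is a modular unit on $Y(4)$ by the general theory recalled in \S\ref{siegel units}, since $(1,2)$ and $(1,0)$ both lie in $(\Z/4\Z)^2\setminus\{(0,0)\}$. Since $X(4)$ is a smooth projective curve, two modular units with the same divisor agree up to a nonzero scalar.

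Next I would match the divisors on $X(4)$. Fix a system of $\SL_2(\Z)$-representatives $\gamma_1,\ldots,\gamma_6$ for the six cusps of $\Gamma(4)$, say the cosets of $\Gamma(4)$ in $\SL_2(\Z)$ acting on $\infty$. For each cusp $\gamma_j \cdot \infty = c/d$, Proposition \ref{prop:evaluate siegel unit} gives $\ord_{c/d} g_{a,b} = \tfrac12 B_2(\tilde{a}'/4)$ with $(a',b')=(a,b)\gamma_j$. Carrying this out for $(a,b)\in\{(1,0),(1,2)\}$ produces the divisor of $g_{1,2}^2/g_{1,0}^2$, which must be supported on exactly two cusps with opposite multiplicities and total degree $0$. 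On the other side, since $t$ is a Hauptmodul taking the values $\{0,\pm 1, \pm i,\infty\}$ at the six cusps of $\Gamma(4)$, its divisor on $X(4)$ is $(P_0)-(P_\infty)$, where $P_0$ and $P_\infty$ are the cusps at which $t=0$ and $t=\infty$ respectively. Matching these cusps against our representatives $\gamma_j$ (using, e.g., the $j$-invariant $j(t) = -(4A)^3/\Delta(t)$ of the Rouse--Zureick-Brown model compared against the classical $q$-expansion of $j$) identifies $P_0$ and $P_\infty$ and reduces the equality of divisors to a finite check.

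Finally, once divisors agree, both sides differ by a scalar $\lambda \in \C^\times$, which I would compute by evaluating at any cusp where both sides are finite and nonzero, using Proposition \ref{prop:evaluate siegel unit} together with part (2) of Proposition \ref{prop:siegel-transformation}; the factor of $i$ should emerge from the root-of-unity corrections $(\gamma^* g_{a,b})/g_{a',b'}$ in the Siegel transformation law, combined with the value of $t$ at that cusp read off from the Weierstrass model. The main obstacle is the bookkeeping in identifying which $\Gamma(4)$-cusp corresponds to which of the six $t$-values in the Rouse--Zureick-Brown parametrisation; this is unambiguously resolved by matching $q$-expansions of $j$ against $j(t)$, but it is the step where a mistake is most likely.
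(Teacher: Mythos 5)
Your overall strategy (same divisor on $X(4)$, hence equal up to a scalar, then fix the scalar at one cusp) is a legitimate template, but as written it has a genuine gap: you never fix the modular parametrisation, i.e.\ the identification of the abstract Hauptmodul $t$ of the Rouse--Zureick-Brown model with an actual function of $\tau$ on $\Gamma(4)\backslash\h$. The identity $t=i\,g_{1,2}^2/g_{1,0}^2$ only makes sense, and is only true, for one specific such identification; replacing it by a precomposition with $\gamma\in\SL_2(\Z)$ turns the right-hand side into $i'\,g_{(1,2)\gamma}^2/g_{(1,0)\gamma}^2$, a genuinely different quotient of Siegel units, and simultaneously permutes which cusps of $\Gamma(4)$ carry $t=0$ and $t=\infty$. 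Your proposed fix --- matching $q$-expansions of $j$ against $j(t)$ --- cannot resolve this, because $j$ is invariant under the whole deck group $\SL_2(\Z/4\Z)/\{\pm1\}$ of $X(4)\to X(1)$ (and does not even distinguish the two geometric components of $Y(4)$), so it determines $t(\tau)$ only up to that group. Thus both the identification of $P_0,P_\infty$ among the cusps $\gamma_j\infty$ and the cusp evaluation used to extract the constant $i$ are underdetermined in your argument; moreover, for the later computations in \S 8.5 the indices $(1,2),(1,0)$ must be consistent with the indexing of the torsion sections entering $\{X,Y\}$, which your proof never touches.

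The paper pins the parametrisation down precisely through the torsion structure: it takes the explicit $4$-torsion points $P_1,P_2$ of the model, normalises the uniformisation so that $P_1,P_2$ correspond to $z=1/4$ and $z=\tau/4$ (compatibly with the Weil pairing), observes that
\begin{equation*}
t=\frac{x(P_1+P_2)-x(2P_1+2P_2)}{x(2P_1+2P_2)-x(2P_1+P_2)}
\end{equation*}
is a Weierstrass unit, and then converts the $\wp$-differences into Siegel units via the $\sigma$-function product expansion exactly as in Proposition \ref{proZ1}; the constant $i$ and the exact indices fall out of that computation. If you want to keep your divisor-plus-constant route, you must first import this normalisation (some relation expressing $t$, or at least its zero and pole cusps, in terms of the level structure $P_1,P_2$); but once you have that, the paper's direct Weierstrass-unit calculation already gives the full identity, so the divisor argument buys little.
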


\begin{proof}
The torsion subgroup of $E(4)$ over $\Q(i)(t)$ is isomorphic to $\Z/4\Z \times \Z/4\Z$, generated by the two points of order 4
\begin{align*}
P_1 & = (3t^4 + 18it^3 - 18t^2 - 18it + 3, -54it^5 + 108t^4 + 108it^3 - 108t^2 - 54it),\\
P_2 & = (-15t^4 + 3, -54it^6 + 54it^2).
\end{align*}
From these expressions we find that
\begin{equation*}
t = \frac{x(P_1+P_2)-x(2P_1+2P_2)}{x(2P_1+2P_2)-x(2P_1+P_2)}.
\end{equation*}
In particular $t$ is a Weierstrass unit and we may express it as a quotient of Siegel units. We parametrize $E(4)$ in such a way that $P_1$, $P_2$ correspond to $z_1=1/4$, $z_2= \tau/4$ respectively (this identification is compatible with the Weil pairing). This gives
\begin{equation*}
t(\tau) = \frac{\wp_\tau(\tau/4+1/4)-\wp_\tau(\tau/2+1/2)}{\wp_\tau(\tau/2+1/2)-\wp_\tau(\tau/4+1/2)} = i \frac{g_{3,3}g_{1,2}^2}{g_{1,1}g_{1,0}g_{3,0}}=i\frac{g_{1,2}^2}{g_{3,0}^2}.
\end{equation*}
Proceeding as in the case of $E_1(8)$, we deduce the Lemma.
\end{proof}

Since we know how to evaluate Siegel units at any cusp, we get the following result.

\begin{lem}
We have $t(\infty)=i$ and $t(1/2)=t(-1/2)=-i$.
\end{lem}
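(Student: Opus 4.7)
The plan is to apply Proposition \ref{prop:evaluate siegel unit} to the quotient $t=ig_{1,2}^2/g_{1,0}^2$ at each of the three cusps. For every cusp $c/d$ we need to (i) check that $\ord_{c/d}(t)=0$ so that the limit exists and is nonzero, and (ii) compute the explicit root of unity coming from the factors $\gamma^*g_{a,b}/g_{a',b'}$ via Proposition \ref{prop:siegel-transformation}.

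For $\tau=i\infty$ one can avoid Proposition \ref{prop:siegel-transformation} altogether and use the product expansion \eqref{eqn:siegel-infinite-product} directly: the two units $g_{1,0}$ and $g_{1,2}$ have the same leading power $q^{B_2(1/4)/2}=q^{-1/96}$, so the $q$-powers cancel in the ratio; since $\tilde a=1\neq 0$, each factor of the form $(1-q^{n\pm 1/4}\zeta_4^{\pm b})$ tends to $1$ as $q\to 0$, giving $(g_{1,2}/g_{1,0})^2\to 1$ and hence $t(\infty)=i$.

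For $\tau=1/2$, choose $\gamma=\sabcd{1}{0}{2}{1}\in\SL_2(\Z)$ with $d=2>0$. Then $(1,2)\gamma\equiv(1,2)$ and $(1,0)\gamma\equiv(1,0)\pmod 4$, so $\tilde a'=1$ in both cases and
\[
\ord_{1/2}(t)=\tfrac12\bigl(2B_2(1/4)-2B_2(1/4)\bigr)=0.
\]
Since $\delta(\tilde a')=0$, the $(1-\delta(\tilde a')\zeta_N^{b'})$-factors in Proposition \ref{prop:evaluate siegel unit} are trivial, so
\[
t(1/2)=i\cdot\bigl(\gamma^*g_{1,2}/g_{1,2}\bigr)^2\cdot\bigl(\gamma^*g_{1,0}/g_{1,0}\bigr)^{-2}.
\]
The only real computation is to plug $c=1,d=2,f=1,N=4$ into the formula of Proposition \ref{prop:siegel-transformation}(2). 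The $B_2$-contributions and $\delta$-contributions coincide for the numerator and denominator, so only the Dedekind-type sum $-2\sum_{k=1}^2\mathcal P_1(\tfrac12(k-\tfrac14))\mathcal P_1(\tfrac12(k-\tfrac14)-\tfrac b4)$ matters; evaluating $\mathcal P_1(3/8)=-1/8$, $\mathcal P_1(7/8)=3/8$, this sum differs by an explicit rational for $b=0$ versus $b=2$, yielding $\gamma^*g_{1,0}/g_{1,0}=e^{-i\pi/3}$ and $\gamma^*g_{1,2}/g_{1,2}=e^{i\pi/6}$, whence $t(1/2)=i\cdot e^{i\pi}=-i$.

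For $\tau=-1/2$ take $\gamma=\sabcd{-1}{0}{2}{-1}$; now $(1,2)\gamma\equiv(3,2)$ and $(1,0)\gamma\equiv(3,0)\pmod 4$, so $\tilde a'=3$, $B_2(3/4)=B_2(1/4)=-1/48$, and again $\ord_{-1/2}(t)=0$ with trivial $\delta$-factors. The analogous (and almost parallel) computation with the sign changes coming from $c=-1$, $f=-1$ yields $\gamma^*g_{1,0}/g_{3,0}=e^{i\pi/3}$, $\gamma^*g_{1,2}/g_{3,2}=e^{-i\pi/6}$, and hence $t(-1/2)=i\cdot e^{-i\pi}=-i$. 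The only real obstacle is keeping track of signs and the fractional-part conventions in $\mathcal P_1$; once these are handled, the three statements follow immediately.
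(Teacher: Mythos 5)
Your proposal is correct and follows exactly the route the paper intends: the paper gives no written proof beyond the remark that Siegel units can be evaluated at any cusp, i.e.\ one applies Propositions \ref{prop:siegel-transformation} and \ref{prop:evaluate siegel unit} to $t = i\,g_{1,2}^2/g_{1,0}^2$, which is what you do. I checked your root-of-unity computations (arguments $-\pi/3$, $\pi/6$ at the cusp $1/2$ and $\pi/3$, $-\pi/6$ at $-1/2$) and they are accurate, so your write-up simply supplies the details the paper omits.
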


Using Magma, we find two functions $X,Y$ on $E(4)$ such that $\{X,Y\} \in H^2_\mathcal{M}(E(4),\Q(2))$. Their divisors are given by
\begin{equation*}
\dv(X) = -2(2P_2) + 2(2P_1+2P_2) \qquad \dv(Y) = -2(P_1) + 2(3P_1)
\end{equation*}
and they satify the relation
\begin{equation*}
P(X,Y,t) =  (X+\frac{1}{X}-2)(Y+\frac{1}{Y}+2) + 2i \frac{(t-i)^4}{t^3 - t} = 0.
\end{equation*}
Let $D \subset V(P)$ be the Deninger cycle associated to $P$ with respect to the variable $t$. By Jensen's formula, we have
\begin{equation}\label{mP E4}
m(P)-m(P^*) = \frac{1}{(2\pi i)^2} \int_D \log |t| \cdot \frac{dX}{X} \wedge \frac{dY}{Y} = \frac{1}{(2\pi i)^2} \int_D \log |t| \cdot \pi_2(\frac{dX}{X} \wedge \frac{dY}{Y}).
\end{equation}
We now want to relate $D$ with a Shokurov cycle on the connected component of $E(4)(\C)$ defined as the image of the map $(\tau,z) \mapsto (\tau,z,\sigma)$.

\begin{lem}\label{two arcs}
Let $C_1$ (resp. $C_2$) be the circle with center $1$ (resp. $-1$) passing through $i$ in the complex plane. Then $t(D)$ is the union of the two arcs $C_1 \cap \{|t|>1\}$ and $C_2 \cap \{ |t|>1\}$.
\end{lem}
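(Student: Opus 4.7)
My plan is to parametrise $T^2$ by $X=e^{i\phi}$, $Y=e^{i\psi}$ and use the factorisation visible in $P$: since
\[
\Bigl(X+\tfrac1X-2\Bigr)\Bigl(Y+\tfrac1Y+2\Bigr) = -4(1-\cos\phi)(1+\cos\psi) \in [-16,0],
\]
the defining equation $P(X,Y,t)=0$ forces the \emph{a priori} complex quantity $F(t) := 2i(t-i)^4/(t^3-t)$ to be a non-negative real number at most $16$, and conversely every $t$ with $F(t)\in[0,16]$ is realised by some pair $(\phi,\psi)$. It therefore suffices to identify $\{t : |t|>1,\ F(t)\in[0,16]\}$ with $(C_1\cup C_2)\cap\{|t|>1\}$.

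To analyse the locus $F(t)\in\R$, I would apply the Cayley transform $w=(t-i)/(t+i)$, which matches the ramification data of $F$ (the quadruple zero at $t=i$ and the natural role of $\pm i$). A short calculation expressing $t$, $t\pm 1$ and $t-i$ in terms of $w$, using $(1+i)(i-1)=-2$, collapses $F$ to the clean form
\[
F(t) = \frac{16\, w^4}{w^4-1}.
\]
Since this is a M\"obius function of $z=w^4$, one has $F\in\R\iff z\in\R$, and solving $16z/(z-1)\in[0,16]$ for real $z$ yields $z\le 0$; equivalently $w^2\in i\R$.

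To translate $w^2\in i\R$ back to $t$, set $A=(t-i)(\bar t-i)=(|t|^2-1)-2i\Re t$, so that $(t+i)(\bar t+i)=\bar A$. After clearing denominators, the condition $w^2+\overline{w^2}=0$ becomes $A^2+\bar A^2=0$, i.e.\ $\Re(A^2)=0$. Expanding gives $(|t|^2-1)^2=4(\Re t)^2$, which factors as $|t-1|^2=2$ or $|t+1|^2=2$, precisely the equations defining $C_1$ and $C_2$. Intersecting with $\{|t|>1\}$ finishes the proof: each of $C_1,C_2$ meets $|t|=1$ exactly at $\pm i$ (the only points on those circles with $\Re t=0$), so each contributes a single arc outside the closed unit disk, yielding the two arcs of the statement. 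The step that I expect to be the main source of potential error is the collapse of $F$ to $16w^4/(w^4-1)$, which I would double-check by evaluating both sides at a convenient point such as $t=1+\sqrt 2$ (giving $F=8$, $w^4=-1$).
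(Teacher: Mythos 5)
Your proof is correct, and it takes a genuinely different route through the key computation. Both arguments start the same way: on $T^2$ the factor $(X+\frac1X-2)(Y+\frac1Y+2)$ sweeps exactly $[-16,0]$, so $t(D)=\{|t|>1,\ F(t)\in[0,16]\}$ with $F(t)=2i(t-i)^4/(t^3-t)$. The paper then works in real coordinates $t=u+iv$: the condition $F(t)\in\R$ factors as $2u(u^2+v^2+2u-1)(u^2+v^2-2u-1)(u^2+v^2-1)=0$, the unit circle is discarded by $|t|>1$ and the imaginary axis by checking $F\notin[0,16]$ there; for the reverse inclusion it parametrises $C_1$ by $t=1+\sqrt2\,e^{i\theta}$, verifies by an explicit trigonometric computation that $F$ lands in $[0,16]$, and transfers to $C_2$ via the symmetry $(X,Y,t)\mapsto(\overline X,\overline Y,-\overline t)$. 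You instead substitute $w=(t-i)/(t+i)$; I checked your identity $F=16w^4/(w^4-1)$ (one has $t-i=2iw/(1-w)$ and $t(t-1)(t+1)=-2i(1+w)(1+w^2)/(1-w)^3$), and the chain $F\in[0,16]\iff w^4\le 0\iff \Re(A^2)=0\iff t\in C_1\cup C_2$ is sound: the endpoint value $F=16$ occurs only at $t=-i$, excluded by $|t|>1$, and the poles $t\in\{0,\pm1\}$ lie on neither circle. This yields both inclusions in one stroke and replaces the paper's case analysis and unenlightening trigonometric verification by an exact algebraic characterisation; what the paper's route buys is that it stays with elementary real-variable manipulations and reuses a symmetry it needs anyway. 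In a final write-up you should still spell out the (easy) surjectivity of $(X,Y)\mapsto(X+\frac1X-2)(Y+\frac1Y+2)$ onto $[-16,0]$ (e.g.\ fix $Y=1$), and the observation that on $C_1$, $C_2$ one has $|t|^2=1\pm 2\Re t$, so each circle meets $\{|t|>1\}$ in the single arc $\{\Re t>0\}$, resp.\ $\{\Re t<0\}$, as in the statement.
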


\begin{proof}
Let $(X,Y,t) \in D$. Since $|X|=|Y|=1$, we have $2i(t-i)^4/(t^3-t) \in \R$. Writing $t=u+iv$, we find the equation
\begin{equation*}
2u(u^2+v^2+2u-1)(u^2+v^2-2u-1)(u^2+v^2-1)=0.
\end{equation*}
We have $X+1/X-2 \in [-4,0]$ and $Y+1/Y+2 \in [0,4]$, thus $2i(t-i)^4/(t^3-t) \in [0,16]$. The equation $u=0$ leads to $-2(v-1)^4/(v^3+v) \in [0,16]$ which is impossible for $|v|>1$. Therefore $t(D)$ is contained in the union of the two arcs.

Conversely, let $t \in C_1 \cup C_2$ with $|t|>1$. Since $D$ is invariant under $(X,Y,t) \mapsto (\overline{X},\overline{Y},-\overline{t})$, it suffices to treat the case $t \in C_1$. So let us write $t=1+\sqrt{2} e^{i\theta}$ with $\theta \in (-3\pi/4,3\pi/4)$. We find
\begin{align*}
2i \frac{(t-i)^4}{t^3-t} & = \frac{2i(t-i)^4(\overline{t}^3-\overline{t})}{|t^3-t|^2} \\
& = \frac{416+384\sqrt{2} \cos \theta-224\sqrt{2}\sin \theta + 128 \cos(2\theta)-224 \sin(2\theta)-32\sqrt{2} \sin(3\theta)}{52+48\sqrt{2} \cos \theta + 16 \cos(2\theta)}.
\end{align*}
An explicit computation reveals that this function takes values in $[0,16]$, hence there exist $X,Y \in S^1$ such that $P(X,Y,t)=0$.
\end{proof}

\begin{lem}
The map $\tau \mapsto t(\tau)$ is a diffeomorphism from the modular symbols $\{-1/2,\infty\}$ and $\{1/2,\infty\}$ to $C_1 \cap \{|t|>1\}$ and $C_2 \cap \{|t|>1\}$ respectively.
\end{lem}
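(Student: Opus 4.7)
The plan is to treat $\{1/2,\infty\}$ in detail; the case of $\{-1/2,\infty\}$ will then follow from the symmetry $\tau\mapsto -\overline{\tau}$. Since $g_{1,2}$ and $g_{1,0}$ both have real $q$-expansions (as $\zeta_{4}^{\pm 2}=-1$ and $\zeta_{4}^{0}=1$ are real), Lemma~\ref{lem t} yields the antiholomorphic relation $\overline{t(\tau)}=-t(-\overline{\tau})$, and the associated involution $t\mapsto -\overline{t}$ of $\mathbb{P}^{1}$ exchanges the circles $C_{2}$ and $C_{1}$ while preserving $\{|t|>1\}$.

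The endpoints are immediate from Propositions~\ref{prop:siegel-transformation} and~\ref{prop:evaluate siegel unit}: $t(\infty)=i$ and $t(1/2)=-i$, which are the two common points of $C_{1}$ and $C_{2}$. The heart of the argument is to show that $t(\tau)\in C_{2}$ on the geodesic, equivalently $|t(\tau)+1|^{2}=2$. To this end I plan to first establish the modular identity
\begin{equation*}
t(\tau+1)\;=\;\frac{1+t(\tau)}{1-t(\tau)}.
\end{equation*}
Since $T:\tau\mapsto\tau+1$ normalises $\Gamma(4)$, both sides are meromorphic on $Y(4)$, and because $t$ is a degree-one generator of the function field both sides are Möbius transformations of $t$; the identity thus reduces to a check at three cusps, for instance $\tau=\infty,\,1/2,\,0$, which give respectively $i\mapsto i$, $-i\mapsto -i$ and $\infty\mapsto -1$. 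Combining this identity with $\overline{t(\tau)}=-t(-\overline{\tau})$ yields $t(1-\overline{\tau})=(1-\overline{t(\tau)})/(1+\overline{t(\tau)})$, and since $1-\overline{\tau}=\tau$ on the geodesic one obtains $|t(\tau)|^{2}+t(\tau)+\overline{t(\tau)}=1$, which is precisely $|t(\tau)+1|^{2}=2$.

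To single out the correct arc of $C_{2}$, I would expand $t$ near $\tau=\infty$ on the geodesic: from $g_{1,2}/g_{1,0}=1+2q^{1/4}+O(q^{1/2})$ with $q^{1/4}=e^{\pi i\tau/2}$, evaluation at $\tau=1/2+iy$ gives $|t(\tau)|^{2}=1+4\sqrt{2}\,e^{-\pi y/2}+O(e^{-\pi y})$, so $|t|>1$ in a neighbourhood of $\infty$. Bijectivity and smoothness then come for free from the Hauptmodul property: $t:X(4)\to\mathbb{P}^{1}$ is a global isomorphism, so its derivative is nowhere vanishing, and its restriction to the closed geodesic $[1/2,\infty]$ is a diffeomorphism onto a connected subarc of $C_{2}$ with boundary $\{\pm i\}$; the local inequality $|t|>1$ forces that subarc to be $C_{2}\cap\{|t|>1\}$.

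The main obstacle I anticipate is the verification of $t(\tau+1)=(1+t)/(1-t)$: although it reduces to a finite cusp check, each evaluation uses Proposition~\ref{prop:evaluate siegel unit} and requires careful bookkeeping of the arguments of roots of unity coming from Proposition~\ref{prop:siegel-transformation}(2). A reassuring consistency test is that $R(t)=(1+t)/(1-t)$ satisfies $R^{2}(t)=-1/t$ and $R^{4}=\operatorname{id}$, matching the expected order-$4$ action of $\tau\mapsto\tau+1$ on $Y(4)$ and in particular recovering the easier identity $t(\tau+2)=-1/t(\tau)$, which one reads off directly from the translation rule for Siegel units applied twice.
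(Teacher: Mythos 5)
Your core strategy is essentially the paper's: you force $t(\tau)$ onto $C_2$ along $\Re\tau=\tfrac12$ by combining a functional equation for $t$ under a unit translation with the antiholomorphic symmetry $\overline{t(\tau)}=-t(-\overline{\tau})$ (your identity $t(\tau+1)=(1+t)/(1-t)$ is exactly equivalent to the relation $\sabcd 1{-1}01^*(t-1)=-2(t+1)^{-1}$ used in the paper), and you then transport the statement to $\{-1/2,\infty\}$ and $C_1$ by the same symmetry. Where you genuinely differ: you prove the functional equation by noting that $t\circ T$ is a M\"obius transformation of $t$ (since $T$ normalises $\Gamma(4)$ and $t$ is a Hauptmodul) and matching values at three cusps, instead of the paper's explicit Siegel-unit factorisations of $t\pm1$; this is legitimate, provided you actually verify $t(0)=\infty$, $t(1)=-1$ and the $\Gamma(4)$-equivalence $3/2\sim 1/2$ (all routine with Propositions \ref{prop:siegel-transformation} and \ref{prop:evaluate siegel unit}). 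You also replace the paper's one-point numerical check of $|t|>1$ by the expansion $|t(1/2+iy)|^2=1+4\sqrt{2}e^{-\pi y/2}+O(e^{-\pi y})$, which is correct and arguably cleaner.

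The one real weak point is the claim that bijectivity ``comes for free from the Hauptmodul property.'' Injectivity of $t$ on $X(4)$ does \emph{not} by itself give injectivity of $\tau\mapsto t(\tau)$ on the geodesic: that is equivalent to the geodesic injecting into $\Gamma(4)\backslash\h$, which is precisely the point where the paper has to invoke Snowden's Lemma 3.3.4 (as in Lemma \ref{diffeo Ztau}). Likewise, a nowhere-vanishing derivative only makes the restriction an immersion, and an immersed interval in the circle $C_2$ could a priori double back or wrap around. Your argument can, however, be completed with ingredients you already have: since $\Gamma(4)$ is torsion-free, $t'(\tau)\neq 0$ on $\h$, so $y\mapsto t(1/2+iy)$ is locally injective as a map into $C_2$; since $t$ is injective on $X(4)$ and takes the values $\pm i$ only at the cusps $\infty$ and $1/2$, the open geodesic maps into $C_2\setminus\{\pm i\}$, hence by connectedness and your expansion at $i\infty$ into the open arc $C_2\cap\{|t|>1\}$, which is homeomorphic to an interval; a continuous locally injective map between intervals is strictly monotone, hence injective, and its image is a connected subset accumulating at both $i$ and $-i$, hence the whole arc. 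Either spell out this monotonicity/no-wrap-around argument, or fall back on the paper's appeal to Snowden's lemma; as written, the ``for free'' step is a gap, though a reparable one.
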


\begin{proof}
We want to show that for $\tau \in \{1/2,\infty\}$, the point $t(\tau)$ lies on $C_2$, that is $|t(\tau)+1|^2=2$.
Using $t(-\overline{\tau}) = -\overline{t(\tau)}$ we can then conclude that $t(\{-1/2,\infty\})$ lies on $C_1$.

The functions $t+1$ and $t-1$ are modular units, since $t(1)=-1$ and $t(3)=1$. Their expression in terms of Siegel units is given by
\[
t+1 = g_{0,3}g_{1,0}^{-2}g_{1,1}^{2}g_{1,3}^{4}g_{2,1},\quad t-1 = -2 g_{0,3}^{-1}g_{1,0}^{-4}g_{1,1}^2g_{1,2}^{-2}g_{2,3}^{-1}.
\]
From Proposition \ref{prop:siegel-transformation} we see that $\sabcd 1{-1}01^*(t-1) = -2(t+1)^{-1}$. So for $v\in\R$ \[
\overline{t(1/2+iv)+1}=-(t(-1/2+iv)-1)=-\sabcd 1{-1}01^*(t-1)(1/2+iv)=\frac{2}{t(1/2+iv)+1},
\]
which shows $|t(1/2+iv)+1|^2=2$.

It remains to prove that $|t(1/2+iv)|>1$. For $\tau=1/2+iv$ we have
\begin{equation*}
|t(\tau)|^2 = t(\tau) \overline{t(\tau)} = -t(\tau)t(-\bar{\tau}) = -t(\tau)t(\tau-1)= \frac{t(\tau) (1-t(\tau))}{1+t(\tau)}.
\end{equation*}
So $|t(\tau)|=1$ implies $t(\tau)-t(\tau)^2=1+t(\tau)$ so that $t(\tau)=\pm i$. But these values correspond to cusps, so we get a contradiction. To conclude, it suffices to show that $|t(1/2+iv)|>1$ for one particular value of $v$, which we can check numerically.

To show that the map $t:\{\frac12,\infty\}\to C_2\cap\{|t|>1\}$ is a diffeomorphism, it remains to show that it is injective. Since $t$ is a Hauptmodul for $\Gamma(4)\backslash\h$, it suffices to show that the projection from $\{\frac12,\infty\}$ to $\Gamma(4)\backslash\h$ is injective. In fact even $\{\frac12,\infty\} \to \Gamma_1(4)\backslash\h$ is injective which can be shown with \cite[Lemma 3.3.4]{snowden:real}, as in Lemma \ref{diffeo Ztau}.
\end{proof}

By the previous lemmas, we see that $D = D_1 \cup D_2$ where $D_1$ (resp. $D_2$) is fibred over $\{-1/2,\infty\}$ (resp. $\{1/2,\infty\}$). Moreover, a numerical computation reveals that $D_2$ is fibrewise homologous to the Shokurov cycle $\gamma = (2X-Y)\{1/2,\infty\}$. Let us consider the complex conjugation $c_0 : (\tau,z,\sigma) \mapsto (-\overline{\tau},\overline{z},\sigma)$ on $E(4)(\C)$. This involution corresponds to the map $(X,Y,t) \mapsto (\overline{X},1/\overline{Y},-\overline{t})$ on $V(P)$. It thus exchanges $D_1$ and $D_2$ and reverses the orientations. It follows that $D$ is fibrewise homologous to $\gamma - c_0(\gamma)$ as an oriented cycle.

Let us now turn to the integrand of (\ref{mP E4}). By similar arguments as in Lemma \ref{mP eta1}, we have
\begin{equation*}
m(P)-m(P^*) = \pm \frac{1}{(2\pi i)^2} \frac{256}{3} \int_D \eta
\end{equation*}
with
\begin{equation*}
\eta = (\Eis^0_{\mathcal{D}}(1,2)-\Eis^0_{\mathcal{D}}(1,0)) \cdot \pi_2(\Eis^1_{\mathrm{hol}}(2,1)).
\end{equation*}
Note that the differential form $\operatorname{darg}{t}$ does not vanish on $D$, so we cannot write the Mahler measure as an integral of a closed differential form as in Lemma \ref{mP eta1}. Since $\eta$ is holomorphic with respect to $z$, and since $c_0^* \eta = -\eta$, we have
\begin{equation*}
\int_D \eta = \int_{\gamma} \eta - \int_{c_0(\gamma)} \eta = 2 \int_\gamma \eta.
\end{equation*}
Note that $\gamma=(2X-Y) \{1/2,\infty\}$ is a sum of two Manin symbols, but since $\eta$ is not closed, we cannot apply Stokes' theorem as in \S \ref{section:Calculation}. Instead we use the following degeneracy map from $E(8)$ to $E(4)$:
\begin{align*}
\lambda : (\Z^2 \rtimes \Gamma(8)) \backslash (\mathcal{H} \times \C) & \to (\Z^2 \rtimes \Gamma(4)) \backslash (\mathcal{H} \times \C) \\
(\tau,z) & \mapsto \left(\frac{\tau+1}{2},z\right).
\end{align*}
We have $\gamma = \lambda_* X\{0,\infty\}$ so that
\begin{equation*}
\int_{\gamma} \eta = \int_{\lambda_* X\{0,\infty\}} \eta = \int_{X\{0,\infty\}} \lambda^* \eta.
\end{equation*}
Using the Fourier expansion of Eisenstein symbols \cite[\S 8]{brunault:regRZ}, we compute
\begin{align*}
\lambda^* \Eis^{0,4}_{\mathcal{D}}(a,b) & = 2 \sum_{\substack{a' \in \Z/8\Z \\ a' \equiv a \pmod{4}}} \Eis^{0,8}_{\mathcal{D}}(a',a'+2b), \\
\lambda^* \Eis^{1,4}_{\mathrm{hol}}(a,b) & = 4 \sum_{\substack{a' \in \Z/8\Z \\ a' \equiv a \pmod{4}}} \Eis^{1,8}_{\mathrm{hol}}(a',a'+2b),
\end{align*}
where $\Eis^{k,N}$ denotes the Eisenstein symbol of weight $k$ and level $N$. It follows that
\begin{equation*}
\lambda^* \eta = 8 (\Eis^{0,8}_{\mathcal{D}}(1,5)+\Eis^{0,8}_{\mathcal{D}}(5,1)-\Eis^{0,8}_{\mathcal{D}}(1,1)-\Eis^{0,8}_{\mathcal{D}}(5,5)) \cdot \pi_2(\Eis^{1,8}_{\mathrm{hol}}(2,4)+\Eis^{1,8}_{\mathrm{hol}}(6,0)).
\end{equation*}
By Proposition \ref{prop:abs convergence}(2) the integral of $\lambda^* \eta$ is convergent. Applying the formulas \cite[top of p. 1149]{brunault:regRZ} and \cite[(36)]{brunault:regRZ}, we obtain $m(P)-m(P^*) = \pm \Lambda(F(2\tau),0)$ with
\begin{equation*}
F = 4f_{16} - 2E_3^{1,\chi_4}+2E_3^{1,\chi_4,2}.
\end{equation*}
Since $m(P^*)=\log 2 = \Lambda(2E_3^{1,\chi_4}-2E_3^{1,\chi_4,2},0)$, we finally get $m(P)=4\Lambda(f_{16},0)$. The change of variables $(X,Y,t) \to (-X,Y,-it)$ gives the last identity of Theorem \ref{thm:Summary}.

%
%Bibliography
%

\bibliographystyle{plain}
%\bibliography{papers}
\bibliography{refs}

\vspace{\baselineskip}

\author{Fran\c{c}ois Brunault, francois.brunault@ens-lyon.fr\\
\'{E}NS Lyon, UMPA, 46 all\'{e}e d'Italie, 69007 Lyon, France \\
\\
Michael Neururer, neururer@mathematik.tu-darmstadt.de\\
TU Darmstadt, Schlo\ss gartenstr. 7, 64289 Darmstadt, Germany}

\end{document}